\newcommand{\R}{\mathbb{R}}
\renewcommand{\H}{\textcolor{red}{\mathscr{H}}}
 \newcommand{\supp}{\text{\rm supp}\,}
 \renewcommand{\supp}{\text{\rm supp}\,}
\newtheorem{theorem}{Theorem}[section]
\newtheorem{lemma}[theorem]{Lemma}
\newtheorem{definition}[theorem]{Definition}
\newtheorem{proposition}[theorem]{Proposition}
\newtheorem{corollary}[theorem]{Corollary}
\newtheorem{remark}[theorem]{Remark}
\newtheorem{example}[theorem]{Example}
\numberwithin{equation}{section}
\numberwithin{figure}{section}
\renewcommand{\div}{\mathrm{div}} 
\newcommand{\res}{\mathop{\hbox{\vrule height 7pt width .5pt depth 0pt
\vrule height .5pt width 6pt depth 0pt}}\nolimits}
\def\intave#1{\int_{#1}\hbox{\llap{$\raise2.3pt\hbox{\vrule
height.9pt width7pt}\phantom{\scriptstyle{#1}}\mkern-2mu$}}}
\newtheorem{problem}{Problem}[section]
\title[Heat insulation problem]{Compactness of $M$-uniform domains and optimal thermal insulation problems}
\author{Hengrong Du, \ Qinfeng Li, \ Changyou Wang}
\address{Department of Mathematics, Purdue University, West Lafayette, IN 47907, USA.}
\email{du155@purdue.edu}
\address{School of Mathematics, Hunan University, Changsha 410082, Hunan, P. R. China} \email{liqinfeng1989@gmail.com}
\address{Department of Mathematics, Purdue University, West Lafayette, IN 47907, USA.}
\email{wang2482@purdue.edu}
\begin{document}

\maketitle
\begin{abstract} In this paper, we will consider an optimal shape problem of heat insulation introduced by \cite{BBN1}.
We will establish the existence of optimal shapes in the class of $M$-uniform domains. We will also show that
balls are stable solutions of the optimal heat insulation problem.
\end{abstract}
\section{Introduction}
\subsection{Background}

In this paper, motivated by Bucur-Buttazzo-Nitsch in their papers \cite{BBN} and \cite{BBN1},  we consider the thermal insulation problem of designing the optimal shape $\Omega$ of $\mathbb{R}^n$ which represents a thermally conducting body, and determining the best distribution of a given amount of insulating material around $\Omega$; the thickness of the insulating material is assumed to be very small with respect to the size of $\Omega$ so the material density is assumed to be a nonnegative function defined on the boundary $\partial \Omega$. A rigorous approach is to consider a limit problem when the thickness of the insulating layer goes to zero and simultaneously the conductivity in the layer goes to zero. 

Mathematically, this amounts to consider the limit of the family of functionals, as $\epsilon\to 0$, 
\begin{equation}\label{ins_f1}
F_{\epsilon}(u, h, \Omega)=\frac{1}{2}\int_{\Omega}|\nabla u|^2\,dx+\frac{\epsilon}{2}\int_{\Sigma_\epsilon} |\nabla u|^2\,dx-\int_{\Omega} fu\,dx, 
\end{equation} over $u\in H^1_0(\Omega_{\epsilon})$, where $\Omega_\epsilon=\Omega \cup \Sigma_{\epsilon}$. 
Here $\Omega$ has a prescribed volume $V_0$, $\Sigma_\epsilon=\{\sigma+t\nu(\sigma): \sigma\in\partial\Omega, \, 0\le t \le \epsilon h(\sigma)\}$ is the thin layer of sickness $\epsilon h(\sigma)$ around $\partial \Omega$, and $h \in {\mathscr{H}}_m$, where 
$${\mathscr{H}}_m=\left\{h: \partial \Omega \rightarrow \mathbb{R} \, \mbox{is measurable}, \, h \ge 0, \int_{\partial \Omega} h d\sigma =m\right\}$$and $h$ denotes the distribution function of insulation material with fixed total amount $m>0$.

As in \cite{AB} and \cite{BBN}, in the framework of $\Gamma$-convergence 
passing to the limit $\epsilon \rightarrow 0$ in \eqref{ins_f1} we obtain the limit energy functional
\begin{align}
\label{lim1}
\mathcal{F}_m(u,h,\Omega)=\frac{1}{2}\int_{\Omega}|\nabla u|^2dx+\frac{1}{2}\int_{\partial \Omega} \frac{u^2}{h} d\sigma-\int_{\Omega} fudx.
\end{align}
By \cite{BB}, for any fixed $u$ and $\Omega$, if we minimize $F(u,h,\Omega)$ over $h \in {\mathscr{H}}_m$, 
then $F(u,h,\Omega)$ achieves its minimum when 
\begin{align}
\label{h}
\displaystyle h=m\frac{|u|}{\int_{\partial \Omega} |u| \,d\sigma}.
\end{align}
After substituting \eqref{h} for $h$ into \eqref{lim1}, we seek to minimize 
\begin{align}
\label{min1}
\mathcal{J}_m(u,\Omega):=\frac{1}{2}\int_{\Omega} |\nabla u|^2\,dx+\frac{1}{2m}\left(\int_{\partial \Omega} |u| \,d{\mathscr{H}}^{n-1}\right)^2-\int_{\Omega} fu \,dx
\end{align}
over all $ u \in H^1(\Omega)$, subject to the volume constraint $|\Omega|=V_0$. 

It was proved in \cite{BBN} that for every $f \in L^2(\Omega)$, if $\Omega$ is fixed, then the minimization of \eqref{min1} admits a unique solution $u_\Omega \in H^1(\Omega)$, and moreover if $\Omega=B_R$
and $f\equiv 1$, then 
$$u_{B_R}(x)=\frac{R^2-|x|^2}{2n}+\frac{m}{n^2\omega_nR^{n-2}},$$
where $\omega_n$ is the volume of unit ball in $\mathbb{R}^n$ 
and $B_R$ is the ball of radius $R$ centered at origin. 

Stationary solutions were also obtained in \cite{BBN}. More precisely,  
for a given smooth vector field  $\eta \in C_0^{\infty}(\mathbb{R}^n)$ with $\int_{\Omega} \div \eta {\, dx}=0$, 
let $F_t(x):=F(t,x)$ be the \textit{flow map} generated by the vector field $\eta$,
i.e., $F_t$ solves the ODE in $\R^n$:
\begin{equation}
  \left\{
    \begin{array}{l}
      \frac{d}{dt}F(t,x)=\eta(F(t,x))\\
      F_0(x)=x.
    \end{array}
    \right.
    \label{}
  \end{equation}
It was proved in \cite{BBN} that for $f \equiv 1$, $B_R$ is a stationary shape in the sense that 
$$\frac{d}{dt}\Big |_{t=0} \mathcal{J}_m(u_t, \Omega_t)=0,$$
where $u_t=u\circ F_t^{-1}$, $\Omega_t=F_t(B_R)$, and $|B_R|=V_0$.\\

Two open questions are asked by Bucur-Buttazzo-Nitsch in \cite{BBN1}.
\begin{problem}
\label{113ex}
Do the optimal shapes minimizing the energy functional \eqref{min1} exist?
\end{problem}
\begin{problem}
\label{1133}
Is it true that $B_R$ is a unique optimal shape when $f\equiv 1$?
\end{problem}

\subsection{Existence of minimizers over convex Domains}
There has been a developed scheme for the existence of a minimizer to the problem \eqref{min1} over convex domains contained within a container $B_{R}$ and $H^1$ function associated to such domains, due to the compactness properties of such domains, see \cite{Henrot}, \cite{BB} and the survey book \cite{nshap}. See also the paper \cite{LP} by Lin-Poon. Indeed, the existence of problem \eqref{min1} relies on the following properties for convex domains: If $\Omega \subset B_{R}$ is convex, $|\Omega|=V_0>0$ and $u \in H^1(\Omega)$, then
\begin{itemize}
\item [1.] (Uniform Poincar\'e inequality) There exists a universal constant $C>0$, independent of
$(u,\Omega)$, such that
\begin{align}
\label{unipoincare}
    \int_{\Omega} u^2 dx \le C\Big(\int_{\Omega} |\nabla u|^2 \,dx +\big(\int_{\partial \Omega} |u| \,d{\mathscr{H}}^{n-1}(x) \big)^2\Big).
\end{align} 
This guarantees the uniform $H^1$-bound of $u_i$ for any minimizing sequence
 $(u_i, \Omega_i)$ of $\mathcal{J}_m$.
\item [2.] (Uniform Sobolev extension property) 
There exists a universal constant $C>0$  independend of $\Omega$ such that for each 
$u \in H^1(\Omega)$, there exists $\tilde{u} \in H^1(\mathbb{R}^n)$ such that $\tilde{u}=u$ in $\Omega$,
and
\begin{align}
\label{uniextensionforconvex}
    \Vert \tilde{u} \Vert _{H^1(\mathbb{R}^n)} \le C \Vert u \Vert _{H^1(\Omega)}.
\end{align}

\item [3.] (Compactness of convex domains) If $\Omega_i$ is a sequence of convex sets in $B_{R}$ with $|\Omega_i|=V_0$, then there is a convex domain $\Omega$ such that $\Omega_i \rightarrow \Omega$ in $L^1$ ,
and $${\mathscr{H}}^{n-1}\res _{\partial \Omega_i} \rightarrow {\mathscr{H}}^{n-1}\res _{\partial \Omega}$$
as convergence of Radon measures.

\item [4.] (Lower semicontinuity of energy) From \eqref{unipoincare}, \eqref{uniextensionforconvex} and the compactness of convex domains in $B_{R}$, for any minimizing sequence of pairs $(u_i, \Omega_i)$ to \eqref{min1},  there are $\Omega$ and $u \in H^1(\Omega)$ such that up to a subsequence, 
$\Omega_i \rightarrow \Omega$ in $L^1$,
\begin{align}
 \int_\Omega |\nabla u|^2\,dx\le  \liminf_{i \rightarrow \infty} \int_{\Omega_i} |\nabla u_i|^2 \,dx,
\end{align}
\begin{align}
\label{boundaryconvex}
  \int_{\partial \Omega} |u| d{\mathscr{H}}^{n-1}\le    \liminf_{i \rightarrow \infty} \int_{\partial \Omega_i} |u_i| d{\mathscr{H}}^{n-1}
\end{align}and
\begin{align}
    \lim_{i \rightarrow \infty} \int_{\Omega_i} fu_i \,dx =\int_{\Omega} fu\,dx.
\end{align}
\end{itemize}
The proof of \eqref{boundaryconvex} relies on the parametrization of
$\partial \Omega$ by the sphere (see also \cite{LP}).

It is challenging to generalize this scheme for convex domains to more rough domains. 
In this context, we formulate the problem for a class of specified rough domains as follows. 

\subsection{Formulation of problem \eqref{min1} over rough domains}
We would like to study the minimization problem \eqref{min1} over some controllable rough domains,
belonging to the class of Sobolev extension domains, with fixed volume. 
A natural class of Sobolev extension domains is the so-called $M$-uniform domain.
In fact, when $n=2$, $M$-uniforms domain are equivalent to extension domains for $H^1$ functions, see 
\cite{Jo} and \cite{GLV}. Recall the following definition of $M$-uniform domain, which was first introduced in \cite{GO} and \cite{Jo}.
\begin{definition}
\label{uniformdomain}
For $M>1$, a domain $\Omega \subset \R^n$ is called {an} $M$-uniform domain if for any $x_1,x_2 \in \overline{\Omega}$, there is a rectifiable curve $\gamma : [0,1] \rightarrow \overline{\Omega}$,
such that $\gamma(0)=x_1, \gamma(1)=x_2$, and
\begin{eqnarray}
\label{Jones}
(i) &\,& {\mathscr{H}}^1(\gamma) \le M|x_1-x_2|, \\
(ii)&\,&  d(\gamma(t), \partial \Omega) \ge \frac{1}{M} \min\{|\gamma(t)-x_1|,|\gamma(t)-x_2|\}, \, \forall t \in [0,1].
\end{eqnarray}
\end{definition}
Roughly speaking, {an} $M$-uniform domain has no interior or exterior cusps, and it does not have very thin connections. The class of $M$-uniform domains contains convex domains in a ball, uniform Lipschitz domains and minimally smooth domain introduced in \cite{St}, and it can have a purely unrectifiable boundary, such as the complement of $4$-corner Cantor set. This class has a wide range of sets.

We remark that if $\Omega\subset B_{R}$ is {an} $M$-uniform domain and $u \in H^1(\Omega)$, then $u$ has 
an extension $\tilde{u}$ which is a BV function in an open neighborhood of $B_{R}$. Thus {if $\Omega$ also has finite perimeter, then} the trace of $u$ can be defined on the reduced boundary $\partial^* \Omega$ in the sense that there exists a measurable function $u^*$ on $\partial^* \Omega$ such that
\begin{align}
\label{cy5'}
\lim_{r \rightarrow 0} \frac{1}{r^n}\int_{B_r(x) \cap \Omega}|u-u^*(x)|\,dy=0, \, {\mathscr{H}}^{n-1}a.e. \,  x \in \partial^* \Omega.
\end{align}We call $u^*$ the (interior) trace of $u$ on $\partial^* \Omega$. The reader can refer to the monograph \cite[Theorem 3.77]{afp}.

Therefore, in the following, we formulate the minimization problem \eqref{min1} over rough sets as the minimization of 
\begin{align}
\label{formulation}
\mathcal{J}_m(u,\Omega):=\frac{1}{2}\int_{\Omega} |\nabla u|^2\,dx+\frac{1}{2m}\left(\int_{\partial^* \Omega} |u^*| \,d{\mathscr{H}}^{n-1}\right)^2-\int_{\Omega} fu \,dx
\end{align}
over all $u \in H^1(\Omega), |\Omega|=V_0$.  We will prove that there is a minimizer to \eqref{formulation} 
{among all sets of $M$-uniform domains with uniformly bounded perimeters}, and thus we are able to solve Problem \eqref{113ex} with\textcolor{red}{in} this class of rough domains.
The $M$-uniform condition of $\Omega$ plays an important role in
generalizing the scheme for convex domains as mentioned above. 

\subsection{Main Results}
We will first state a theorem asserting the compactness of $M$-uniform domains in $B_{{R}}$, 
{which does not require the domains to have finite perimeters.}

\begin{theorem}
\label{stri}
For $M>0$, let $\{\Omega_i\}$ be a sequence of $M$-uniform domains in $B_{{R}}$ such that 
\begin{align}
\label{assu}
\inf_{i} {\rm{diam}}(\Omega_i)  >0,
\end{align}then there exists {an} $M$-uniform domain $\Omega$ such that
after passing to a subsequence, $\Omega_{i} \rightarrow \Omega$ in $L^1$, as $i\rightarrow \infty$.
\end{theorem}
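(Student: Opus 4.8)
The plan is to extract a limit set from the sequence $\{\Omega_i\}$ by compactness of indicator functions in $L^1(B_R)$, and then verify that the limit set (after choosing a good representative) is again an $M$-uniform domain. First I would note that $\|\chi_{\Omega_i}\|_{L^1(B_R)} \le |B_R|$, so by weak-$*$ compactness in $L^\infty$ there is a subsequence (not relabeled) and a function $g \in L^\infty(B_R)$ with $0 \le g \le 1$ and $\chi_{\Omega_i} \weakstarto g$. The real work is to show that $g$ is (a.e.) the indicator of an open $M$-uniform domain and that the convergence upgrades to $L^1$. Since membership in the class is governed by the metric conditions (i)--(ii) of Definition~\ref{uniformdomain}, which are stable under Hausdorff-type limits, the natural intermediate object is the Hausdorff limit of the closures $\overline{\Omega_i}$. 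By Blaschke's selection theorem, a further subsequence satisfies $\overline{\Omega_i} \to K$ in the Hausdorff distance for some nonempty compact $K \subset \overline{B_R}$; the lower bound \eqref{assu} on the diameters guarantees $K$ is not a single point, which is exactly what is needed to run the uniform estimates below.

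Next I would define $\Omega := \mathrm{int}(K)$ and prove the two metric conditions for $\Omega$. For condition (i): given $x_1, x_2 \in \overline{\Omega} \subset K$, pick $x_1^i, x_2^i \in \overline{\Omega_i}$ with $x_j^i \to x_j$, take the $M$-uniform curves $\gamma_i$ joining them in $\overline{\Omega_i}$, reparametrize them by constant speed on $[0,1]$, and observe that $\mathscr{H}^1(\gamma_i) \le M|x_1^i - x_2^i|$ gives a uniform Lipschitz bound $M|x_1-x_2| + o(1)$; by Arzelà--Ascoli a subsequence converges uniformly to a curve $\gamma:[0,1]\to K$ with $\mathscr{H}^1(\gamma) \le \liminf \mathscr{H}^1(\gamma_i) \le M|x_1-x_2|$ by lower semicontinuity of length. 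For condition (ii): along the converging curves one has $d(\gamma_i(t), \partial\Omega_i) \ge \frac1M \min\{|\gamma_i(t)-x_1^i|, |\gamma_i(t)-x_2^i|\}$; the right-hand side passes to the limit directly, and for the left-hand side I would use that Hausdorff convergence of $\overline{\Omega_i}$ forces $\limsup d(y,\partial\Omega_i) \le d(y, \partial\Omega)$... more carefully, one shows $B_{d(\gamma(t),\partial\Omega_i)}(\gamma_i(t)) \subset \overline{\Omega_i}$ and passes to the Hausdorff limit to get a ball of radius $\ge \frac1M\min\{\dots\}$ inside $K$ centered at $\gamma(t)$, hence inside $\Omega = \mathrm{int}(K)$, giving the distance bound. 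In particular $\gamma(t) \in \Omega$ for $t \in (0,1)$, so $\Omega$ is path-connected, and it is open by construction, hence a domain; the curve conditions then show it is $M$-uniform.

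Finally I would identify $g$ with $\chi_\Omega$ and upgrade to $L^1$ convergence. The inclusion essentially comes for free in one direction: if $x \in \Omega$ then a small ball $B_\rho(x) \subset K$, and Hausdorff convergence plus the $M$-uniform structure (which prevents the $\Omega_i$ from being "mostly boundary" near an interior point of the limit) forces $|B_\rho(x) \cap \Omega_i| \to |B_\rho(x)|$; conversely $\limsup_i |\Omega_i \setminus \overline{B_R^{\,\Omega}}|$ is controlled since $\overline{\Omega_i} \to K = \overline{\Omega}$ (up to a boundary null set) forces $\Omega_i$ to be eventually contained in any neighborhood of $K$. The cleanest route is: Hausdorff convergence gives $\chi_{\Omega_i} \to \chi_\Omega$ pointwise a.e. on $B_R \setminus \partial\Omega$, and one shows $|\partial\Omega| = 0$ because an $M$-uniform domain has a boundary of Lebesgue measure zero (each boundary point has balls of interior on one side and — since the complement of an $M$-uniform domain in a ball also has interior density bounds — fails to have full density); then dominated convergence gives $\chi_{\Omega_i} \to \chi_\Omega$ in $L^1(B_R)$.

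The main obstacle I expect is precisely this last step — controlling the Lebesgue measure of $\partial\Omega$ and ruling out "measure loss or gain" in the limit, i.e. showing the Hausdorff limit of closures and the $L^1$ limit of indicators produce the same set. The $M$-uniform hypothesis is essential here: it is what prevents the $\Omega_i$ from developing thin tentacles or shrinking necks that would survive in the Hausdorff limit but carry no volume (or vice versa), and the quantitative corkscrew/interior-cone consequences of conditions (i)--(ii) are what one must extract and use. Everything else (Blaschke, Arzelà--Ascoli, lower semicontinuity of length) is standard once the right objects are set up.
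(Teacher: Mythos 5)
Your middle steps are fine and even attractive: Blaschke selection for $\overline{\Omega_i}$, Arzel\`a--Ascoli limits of the constant-speed curves, lower semicontinuity of length, and the ball-passing argument for condition (ii) do give that $\Omega:=\mathrm{int}(K)$ (once one checks it is nonempty, via the corkscrew ball of radius comparable to $\inf_i\mathrm{diam}(\Omega_i)/M$ as in the proof of Proposition \ref{shuyu}) satisfies Definition \ref{uniformdomain}; this is a legitimate alternative to the paper's Lemma \ref{maincompact}, which instead starts from an $L^1$-limit and uses the density class $\mathcal{D}_c$ and Lemma \ref{neijin}. The genuine gap is the step you yourself flag as the main obstacle and then only assert: upgrading Hausdorff convergence of the closures to $L^1$ convergence of the indicators. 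Hausdorff convergence of $\overline{\Omega_i}$ gives no information about whether a point of $\mathrm{int}(K)$ eventually lies in $\Omega_i$, and the only quantitative consequence of $M$-uniformity you invoke (the interior corkscrew) yields merely $|B_\rho(x)\cap\Omega_i|\ge c\rho^n$, not $|B_\rho(x)\cap\Omega_i|\to|B_\rho(x)|$. Moreover your parenthetical justification is false in general: the complement of an $M$-uniform domain need \emph{not} satisfy density bounds (the paper's own example, the complement of the four-corner Cantor set, is $M$-uniform with Lebesgue-null boundary and no exterior mass there), so no soft exterior-density argument is available.

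What is actually needed is a uniform-in-$i$ estimate on neighborhoods of the boundaries. Indeed, writing $\epsilon_i=d_H(\overline{\Omega_i},K)$, every point of $\mathrm{int}(K)\setminus\Omega_i$ lies within $2\epsilon_i$ of $\partial\Omega_i$, so $\mathrm{int}(K)\setminus\Omega_i\subset(\partial\Omega_i)^{2\epsilon_i}$, and one must show $|(\partial\Omega_i)^{2\epsilon_i}|\to0$ \emph{uniformly in} $i$ (similarly $|\Omega_i\setminus K|\le|K^{\epsilon_i}\setminus K|\to0$ and $|K\setminus\mathrm{int}(K)|=0$ are the easy parts). For a single fixed domain this is just $|\partial\Omega|=0$; for the varying sequence it is precisely the quantitative porosity estimate $|(\partial\Omega_i)^r|\le Cr^{\delta}$ of Lemma \ref{volume1}, with $C,\delta$ depending only on $M$, $n$, $R$ and the lower bound \eqref{assu}, whose proof (Vitali covering plus a maximal-function argument, or an equivalent dyadic counting for uniformly porous sets) is the technical heart of the paper; in the paper it is repackaged as the uniform $W^{s,1}$ bound of Corollary \ref{fractionalestimate}, which also provides the strong $L^1$ precompactness that your weak-$*$ compactness step cannot (a weak-$*$ limit $g$ with $0\le g\le1$ need not be an indicator without some such input). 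Your proposal states the conclusion of this lemma (``the $M$-uniform structure prevents the $\Omega_i$ from being mostly boundary'') but supplies no mechanism for it, and the soft facts you do establish cannot deliver it; until that uniform estimate is proved, neither the identification of the limit set nor the $L^1$ convergence follows.
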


\begin{remark}
The assumption \eqref{assu} automatically holds if $|\Omega|=V_0>0$, i.e. there is $c=c(V_0,n)>0$ such that ${\rm{diam}}(\Omega) \ge c>0$.
\end{remark}

With the help of Theorem \ref{stri}, we can prove two uniform Poincar\'e inequalities for $M$-uniform domains, see Theorem \ref{unic} and Theorem \ref{dabian} below. 
Applying Theorem \ref{stri} and Theorem \ref{dabian}, we can prove

\begin{theorem}\label{existence1}
For any $M>0, \Lambda>0, R>0$, and $f\in L^2_{\rm{loc}}(\R^n)$, 
\begin{equation}\label{heat_ins_funs}
\mathcal{J}_m(u,\Omega):=\frac{1}{2}\int_{\Omega} |\nabla u|^2\,dx +\frac{1}{2m}\big(\int_{\partial^* \Omega} |u^*|\,d{\mathscr{H}}^{n-1}\big)^2-\int_{\Omega} fu\,dx.
\end{equation}
Then $\mathcal{J}_m$ admits a minimizer over
\begin{align}
    \mathcal{A}=\Big\{(u,\Omega)\big|u \in H^1(\Omega), \Omega \mbox{\ is {an} $M$-uniform domain in } B_R,
     |\Omega|=V_0>0, {P(\Omega) \le \Lambda} \Big\},
\end{align}
{where $P(\Omega)$ is the perimeter of $\Omega$.}
\end{theorem}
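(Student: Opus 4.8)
The plan is to run the direct method of the calculus of variations on the functional $\mathcal{J}_m$ over the admissible class $\mathcal{A}$, using Theorem~\ref{stri} for the compactness of the domains and Theorem~\ref{dabian} (the uniform Poincar\'e inequality on $M$-uniform domains) to control the competitor functions. First I would fix a minimizing sequence $(u_i,\Omega_i)\in\mathcal{A}$, so that $\mathcal{J}_m(u_i,\Omega_i)\to\inf_{\mathcal A}\mathcal{J}_m$; note the infimum is finite since $\mathcal{A}$ is nonempty (e.g.\ take a ball of volume $V_0$ inside $B_R$, which is $M$-uniform for any $M>1$, has perimeter $\le\Lambda$ after possibly enlarging $\Lambda$, and admits smooth $u$). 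Because $|\Omega_i|=V_0>0$ the diameter bound \eqref{assu} holds uniformly, so Theorem~\ref{stri} yields an $M$-uniform domain $\Omega$ with $\Omega_i\to\Omega$ in $L^1$ along a subsequence (not relabeled); in particular $|\Omega|=V_0$, and by lower semicontinuity of perimeter under $L^1$ convergence, $P(\Omega)\le\liminf_i P(\Omega_i)\le\Lambda$, so the limit domain is itself admissible.

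Next I would extract the limiting function. Using $f\in L^2_{\loc}(\R^n)$ and Young's inequality to absorb the $\int_{\Omega_i} fu_i$ term, the energy bound gives a uniform bound on $\int_{\Omega_i}|\nabla u_i|^2$ together with a uniform bound on $\int_{\partial^*\Omega_i}|u_i^*|\,d\Haus{n-1}$; the uniform Poincar\'e inequality of Theorem~\ref{dabian} then promotes this to a uniform bound on $\|u_i\|_{H^1(\Omega_i)}$. I would use the uniform Sobolev-extension property of $M$-uniform domains in $B_R$ (whose constant depends only on $M,R,n$ and the uniform diameter lower bound) to produce extensions $\tilde u_i\in H^1(\R^n)$ with $\|\tilde u_i\|_{H^1(\R^n)}\le C\|u_i\|_{H^1(\Omega_i)}$; passing to a further subsequence, $\tilde u_i\weakto \tilde u$ in $H^1(\R^n)$ and $\tilde u_i\to\tilde u$ in $L^2_{\loc}$. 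Set $u:=\tilde u|_\Omega\in H^1(\Omega)$, so $(u,\Omega)\in\mathcal{A}$.

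It then remains to verify lower semicontinuity of each term of $\mathcal{J}_m$ along the subsequence. For the Dirichlet term, $\int_\Omega|\nabla u|^2\le\liminf_i\int_{\Omega_i}|\nabla u_i|^2$ follows from weak lower semicontinuity of the $L^2$ norm of $\nabla\tilde u_i$ combined with the $L^1$ convergence $\mathbf 1_{\Omega_i}\to\mathbf 1_\Omega$ (a standard argument: test against $\mathbf 1_\Omega\nabla u$ and use that $\nabla\tilde u_i\mathbf 1_{\Omega_i}\weakto \nabla u\mathbf 1_\Omega$ weakly in $L^2$, using strong $L^2_{\loc}$ convergence of $\tilde u_i$ to pass the characteristic functions through). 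For the linear term, $\int_{\Omega_i}fu_i\to\int_\Omega fu$ by dominated convergence: $f\tilde u_i\mathbf 1_{\Omega_i}\to f\tilde u\mathbf 1_\Omega$ in $L^1(B_R)$ because $\tilde u_i\to\tilde u$ in $L^2(B_R)$, $f\in L^2(B_R)$, and $\mathbf 1_{\Omega_i}\to\mathbf 1_\Omega$ in $L^1$. The genuinely delicate term is the boundary term $\big(\int_{\partial^*\Omega}|u^*|\,d\Haus{n-1}\big)^2$: one must show $\int_{\partial^*\Omega}|u^*|\,d\Haus{n-1}\le\liminf_i\int_{\partial^*\Omega_i}|u_i^*|\,d\Haus{n-1}$ without a common spherical parametrization of the boundaries (which is available for convex domains but not here). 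I expect this to be the main obstacle, and I would handle it via the $BV$ structure: the extensions $\tilde u_i$ are uniformly bounded in $BV$ of a neighborhood of $\overline{B_R}$, the measures $|D\tilde u_i|\res\partial^*\Omega_i$ encode the boundary integrals (since the trace of $\tilde u_i$ from inside $\Omega_i$ agrees with $u_i^*$), and one passes to the limit using the jump part of $D\tilde u$ on $\partial^*\Omega$ together with lower semicontinuity of total variation and the convergence of the reduced boundaries implicit in the finite-perimeter bound — this is exactly where Theorem~\ref{stri} and the perimeter bound $P(\Omega_i)\le\Lambda$ are used in tandem. Once all four semicontinuity statements are in hand, $\mathcal{J}_m(u,\Omega)\le\liminf_i\mathcal{J}_m(u_i,\Omega_i)=\inf_{\mathcal A}\mathcal{J}_m$, and since $(u,\Omega)\in\mathcal{A}$ this forces equality, so $(u,\Omega)$ is the desired minimizer. $\qed$
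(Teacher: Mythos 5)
Your overall route coincides with the paper's: direct method, uniform energy bounds obtained from Young's inequality plus the uniform Poincar\'e inequality of Theorem \ref{dabian}, uniform $H^1$-extension, compactness of the domains from Theorem \ref{stri} together with lower semicontinuity of perimeter to keep the limit domain in $\mathcal{A}$, and then term-by-term lower semicontinuity. All of those steps are in order and are exactly how the paper proceeds (its proof reduces the semicontinuity step to the argument already carried out in the proof of Theorem \ref{dabian}).

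The genuine gap is in the boundary term, which you correctly single out as the main obstacle but whose treatment, as written, would fail. First, the measure $|D\tilde u_i|$ restricted to $\partial^{*}\Omega_i$ is identically zero: the extensions lie in $H^1(\R^n)$, so $D\tilde u_i$ is absolutely continuous and charges no $(n-1)$-dimensional set; the object that encodes $\int_{\partial^{*}\Omega_i}|u_i^{*}|\,d\mathscr{H}^{n-1}$ is the jump part of $D(u_i\chi_{\Omega_i})$, an SBV function whose jump set is contained in $\partial^{*}\Omega_i$ and whose one-sided traces there are $u_i^{*}$ and $0$. Second, ``lower semicontinuity of total variation'' cannot deliver the needed inequality: it only bounds $|D(u\chi_\Omega)|$, which mixes the absolutely continuous part $\int_{\Omega_i}|\nabla u_i|\,dx$ with the jump part, and in the limit mass could in principle transfer from one to the other; one needs lower semicontinuity of the surface energy alone. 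This is precisely what the paper does in the proof of Theorem \ref{dabian}: set $\bar u_i=u_i\chi_{\Omega_i}\in {\rm SBV}(\R^n)$, note $J_{\bar u_i}=\partial^{*}\Omega_i\cap\{u_i^{*}>0\}$, and invoke the SBV lower semicontinuity theorems of \cite[Theorems 2.3 and 2.12]{B}, whose hypotheses are exactly the available uniform bound on $\int_{\Omega_i}|\nabla u_i|^2\,dx$, the uniform bound $\mathscr{H}^{n-1}(\partial^{*}\Omega_i)\le\Lambda$ (so the perimeter constraint is used here, not only to keep the limit admissible), and the weak $L^2$ convergence $\chi_{\Omega_i}\nabla\tilde u_i\rightharpoonup\chi_{\Omega}\nabla\tilde u$; this yields $\int_{\partial^{*}\Omega}|u^{*}|\,d\mathscr{H}^{n-1}\le\liminf_i\int_{\partial^{*}\Omega_i}|u_i^{*}|\,d\mathscr{H}^{n-1}$. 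With that mechanism substituted for your sketch, your argument closes; without it the key inequality remains unproved.
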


It turns out that \eqref{heat_ins_funs} can also be defined over the space of functions of special bounded variations (or
SBV). 

Let $D\subset\R^n$ be a  bounded smooth domain,
and $f\in L^n(D), f\geq 0$. Consider the following minimization problem:
\begin{equation}
  \inf\Big\{ \mathcal{J}(u):=\frac{1}{2}\int_{\R^n}|\nabla u|^2\, dx+\frac{1}{2m}\big( \int_{J_u} (|u^+|+|u^-|)\, d\mathscr{H}^{n-1} \big)^2-\int_{\R^n}fu\, dx\Big\}
  \label{eqn:SBVmin}
\end{equation}
over $\mathcal{S}=\Big\{u\in {\rm{SBV}}(\R^n, \R_+) \ \big|\ |\left\{ u>0 \right\}|=V_0, \ |\supp u\setminus D|=0, 
\ {\mathscr{H}}^{n-1}(J_u\cap \partial D)=0\Big\}$. Here $\nabla u$ is the absolutely continuous part of
the distributional derivative $Du$ with respect to the Lebesgue measure, and $u^+$ and $u^-$ are one side limit of $u$ on 
the jump set $J_u$ of $u$. See \cite{afp} for the definition of SBV($\R^n$).

In this context, we are able to prove another existence result.

\begin{theorem}\label{existence2}
  $\mathcal{J}(\cdot)$ admits a minimizer $u\in \mathcal{S}$.
  \label{}
\end{theorem}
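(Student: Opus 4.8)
The plan is to run the direct method of the calculus of variations in $\mathrm{SBV}(\R^n,\R_+)$, using the special structure of the functional to obtain compactness. First I would take a minimizing sequence $\{u_k\}\subset\mathcal{S}$ for $\mathcal{J}$. The first task is to establish a uniform bound on the relevant $\mathrm{SBV}$-seminorms. Since $f\in L^n(D)$ and $D$ is bounded, the linear term $\int_{\R^n} fu_k$ is controlled by $\|f\|_{L^n(D)}\|u_k\|_{L^{n/(n-1)}(D)}$, and by the Sobolev-type inequality for $\mathrm{SBV}$ functions (the $\mathrm{BV}$ embedding $\mathrm{BV}\hookrightarrow L^{n/(n-1)}$ applied to $u_k$, whose total variation is $\|\nabla u_k\|_{L^1}+\int_{J_{u_k}}(|u_k^+|+|u_k^-|)\,d\Haus{n-1}$, with the first summand bounded by $|D|^{1/2}\|\nabla u_k\|_{L^2}$ using $|\supp u_k\setminus D|=0$), one absorbs the linear term and deduces that $\int_{\R^n}|\nabla u_k|^2\,dx$ and $\big(\int_{J_{u_k}}(|u_k^+|+|u_k^-|)\,d\Haus{n-1}\big)^2$ are both bounded, whence $\int_{J_{u_k}}(|u_k^+|+|u_k^-|)\,d\Haus{n-1}$ is bounded. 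However, this does \emph{not} immediately bound $\Haus{n-1}(J_{u_k})$, so Ambrosio's $\mathrm{SBV}$ compactness theorem does not apply directly — this is the main obstacle, discussed below.

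To get genuine compactness I would invoke the volume constraint $|\{u_k>0\}|=V_0$ together with a truncation argument. Fix a small threshold $\tau>0$ and set $u_k^\tau=(u_k-\tau)_+$; on the level set $\{u_k>\tau\}$ the jump part satisfies $|u_k^+|+|u_k^-|\ge \tau$ wherever a jump of $u_k^\tau$ occurs, so $\Haus{n-1}(J_{u_k^\tau})\le \tau^{-1}\int_{J_{u_k}}(|u_k^+|+|u_k^-|)\,d\Haus{n-1}$ is uniformly bounded, and $\|u_k^\tau\|_{\mathrm{SBV}}$ is bounded as well. Ambrosio's theorem then gives, along a subsequence, $u_k^\tau\to u^\tau$ in $L^1$ with $u^\tau\in\mathrm{SBV}$, lower semicontinuity of $\int|\nabla u^\tau|^2$, and (by the closure/lsc theorem for the jump energy, e.g. \cite[Thm.\ 5.22]{afp}) lower semicontinuity of $\int_{J_{u^\tau}}(|u^{\tau,+}|+|u^{\tau,-}|)\,d\Haus{n-1}$. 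Letting $\tau\downarrow 0$ along a diagonal sequence and using that $u_k\to u$ in $L^1_{\mathrm{loc}}$ (the $L^1$-bound is uniform and supports lie in $\bar D$), one identifies the limit $u\in\mathrm{SBV}(\R^n,\R_+)$ with $\supp u\subset\bar D$, $\Haus{n-1}(J_u\cap\partial D)=0$ preserved because $f$ and hence the competitors are insensitive to behavior on the fixed smooth hypersurface $\partial D$ (and one can choose competitors to avoid jump mass on $\partial D$), and derives the lower semicontinuity $\mathcal{J}(u)\le\liminf_k\mathcal{J}(u_k)$. The linear term passes to the limit by $L^{n/(n-1)}$ weak convergence paired against $f\in L^n$.

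The remaining point is the volume constraint: $|\{u_k>0\}|=V_0$ need not pass to the limit, since mass can escape to a set where $u$ is very small. I would handle this exactly as in the Bucur–Buttazzo–Nitsch / free-discontinuity tradition: show that for the limit $u$ one has $|\{u>0\}|\le V_0$ by Fatou applied to $\mathbf{1}_{\{u_k>0\}}$ along the $L^1$-convergent subsequence (after passing to a further subsequence converging a.e.), and then argue that if $|\{u>0\}|<V_0$ one can strictly decrease the energy or else rescale/translate to restore the constraint without increasing $\mathcal{J}$, contradicting minimality — concretely, because $f\ge 0$ and $u\ge 0$, enlarging the positivity set by adding a small bump supported in $D\setminus\{u>0\}$ at height $\to 0$ changes the gradient and jump energies by an arbitrarily small amount while the constraint term $-\int fu$ does not increase, so the infimum is attained with $|\{u>0\}|=V_0$. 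I expect the delicate step to be this last adjustment together with the control of jump mass on $\partial D$; everything else is a routine application of Ambrosio's $\mathrm{SBV}$ compactness and lower semicontinuity theorems once the truncation trick supplies the missing bound on $\Haus{n-1}(J_{u_k})$.
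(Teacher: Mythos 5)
Your proposal follows essentially the same route as the paper's proof: coercivity of $\mathcal{J}$ from the Meyers--Ziemer Sobolev inequality for BV functions after absorbing the term $\int fu$; the observation that a minimizing sequence controls only $\int_{J_{u_k}}(|u_k^+|+|u_k^-|)\,d\mathscr{H}^{n-1}$ and not $\mathscr{H}^{n-1}(J_{u_k})$; the truncation-plus-Chebyshev device giving $\mathscr{H}^{n-1}(J_{u_k^\tau})\le \tau^{-1}\int_{J_{u_k}}(|u_k^+|+|u_k^-|)\,d\mathscr{H}^{n-1}$, so that Ambrosio's SBV compactness applies at each level $\tau$; lower semicontinuity of both energy terms (the paper quotes Braides), localized on open sets to force $\mathscr{H}^{n-1}(J_u\cap\partial D)=0$; and the passage $\tau\downarrow 0$. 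The paper truncates with $\max\{u_k,\epsilon\}$ instead of $(u_k-\tau)_+$, which is immaterial; it spells out the step you compress into a ``diagonal sequence'', namely that the limit has no Cantor part (obtained there from the strong convergences $\nabla u^\epsilon\to\nabla u$ in $L^2$ and $D^ju^\epsilon\to D^ju$ in measure plus lower semicontinuity of the total variation), and it handles $\mathscr{H}^{n-1}(J_u\cap\partial D)=0$ by localizing the jump lower semicontinuity on an open set away from $D$ rather than by your vaguer remark that competitors are insensitive to $\partial D$; both points should be made explicit, but they are the same argument.

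The one genuine gap is your final paragraph on the volume constraint. Fatou gives only $|\{u>0\}|\le V_0$, and your repair --- add $h\chi_E$ with $E\subset D\setminus\{u>0\}$, $|E|=V_0-|\{u>0\}|$, and send $h\to 0$ --- does not produce a minimizer. For each fixed $h>0$ the competitor $u+h\chi_E$ pays a strictly positive extra jump cost of order $hP(E)$ (and $P(E)$ is bounded below by the isoperimetric inequality since $|E|$ is prescribed), while the gain $h\int_E f$ may vanish because $f\ge 0$ is allowed to be zero on $D\setminus\{u>0\}$; hence $\mathcal{J}(u+h\chi_E)$ can exceed $\mathcal{J}(u)$ for every fixed $h>0$. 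Letting $h\to 0$ therefore shows only that $\inf_{\mathcal{S}}\mathcal{J}\le \mathcal{J}(u)$, which you already know from lower semicontinuity; it does not exhibit an admissible function attaining the infimum, so the conclusion ``the infimum is attained with $|\{u>0\}|=V_0$'' is a non sequitur. What the direct method needs here is that the limit $u$ itself satisfies $|\{u>0\}|=V_0$ --- this is exactly the point the paper settles at the end of its Claim 2, where $|\supp u\setminus D|=0$ and $|\{u>0\}|=V_0$ are deduced from the convergence \eqref{eqn:L1conv} --- or else a genuinely different device such as penalizing the volume constraint. As written, your argument leaves open the possibility that the positivity set shrinks in the limit and no minimizer is obtained.
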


\begin{remark}
  If $\Omega\subset D$ is \textcolor{red}{an} $M$-uniform domain {of finite perimeter} and $u\in H^1(\Omega)$ is a minimizer of the problem \eqref{min1}, then $u\chi_\Omega\in \mathcal{S}$. On the other hand, for a minimizer $v$ of \eqref{eqn:SBVmin}, if $\Omega:=\left\{ x\in D: v(x)>0 \right\}$ is a subdomain of $D$, and $v$ has no jump in $\Omega$, i.e., $\mathscr{H}^{n-1}(J_{\textcolor{red}{v}} \cap \Omega)=0$, {where $J_v$ is the jump set of $v$,} then $v\in H^1(\Omega)$ and $(v\lfloor_{\Omega}, \Omega)$ is a minimizing pair of problem \eqref{formulation}.  
\end{remark}

\medskip
We will also study Problem \eqref{1133}. This problem is extremely challenging. It seems to be open
that among all $C^2$ domains, if $f \equiv 1$, then whether a ball is an optimal configuration, let alone the uniqueness of an optimal shape. To see some of the difficulties to validate the conjecture, one may compare the
functional $\mathcal{J}_m(u,\Omega)$ with the recently well studied energy functional
\begin{eqnarray}
\widetilde{J}(u,\Omega)=\frac{1}{2}\int_{\Omega} |\nabla u|^2{\, dx} + \beta \int_{\partial \Omega} u^2{\, d\sigma} -\int_{\Omega} u {\, dx},
\end{eqnarray}
where $\beta$ is a positive constant. Due to the linearly splitting property of the regular functional $\widetilde{J}$, Steiner symmetrization argument can be implemented to prove that any smooth optimal domain for $\widetilde{J}$ must be a ball, see the explanation in \cite{BG}. In contrast, it seems that none of the known symmetrization methods can be applied to the minimization problem of $\mathcal{J}_m(u,\Omega)$. 

However, we manage to make some partial progress of Problem \eqref{1133}. 
Our idea is to study this optimization problem through the method of domain variations. 
After some delicate calculations, which involves geometric evolution equations and 
eigenvalue estimate of the Stekloff problem, we prove the following theorem.
\begin{theorem}
\label{new3}
For any $m>0$, $R>0$, and any smooth vector field $\eta \in C_0^{\infty}(\mathbb{R}^n, \mathbb{R}^n)$, 
with $\eta(x)\perp T_x\partial B_R$ for $x\in\partial B_R$, if the flow map $F_t$, associated with $\eta$,
preserves the volume of $B_R$, then $(u_R, B_R)$ is a stable, critical point of $\mathcal{J}_m(\cdot,\cdot)$
in the following sense:
\begin{align} \label{stability2}\frac{d}{dt}\Big|_{t=0}\mathcal{J}_m(u_{F_t(B_R)}, F_t(B_R)) = 0,\quad \frac{d^2}{dt^2}\Big|_{t=0}\mathcal{J}_m(u_{F_t(B_R)}, F_t(B_R)) \ge 0.\end{align}
Here $u_{F_t(B_R)}$ is the unique minimizer of $\mathcal{J}_m(\cdot, F_t(B_R))$ in $H^1(F_t(B_R))$.
\end{theorem}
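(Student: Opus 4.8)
The plan is to combine standard shape calculus for the state‑constrained functional $\Omega\mapsto\mathcal{J}_m(u_\Omega,\Omega)$ with the spectral geometry of the ball. Write $\Omega_t:=F_t(B_R)$, $u_t:=u_{\Omega_t}$, and $g(t):=\mathcal{J}_m(u_t,\Omega_t)$; the goal is $g'(0)=0$ and $g''(0)\ge 0$.

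\textbf{Step 1 (reductions).} Since $F_t$ is a smooth flow and $\eta$ has compact support, the $\Omega_t$ are smooth domains with $|\Omega_t|=V_0$, and, transporting to $B_R$ and applying interior and boundary elliptic estimates, $u_t\circ F_t\to u_R$ in $C^1(\overline{B_R})$. Because $w\mapsto|w|$ does not increase $\mathcal{J}_m$ when $f\equiv 1\ge 0$, the minimizer $u_t$ may be taken $\ge 0$; being $C^1$‑close to $u_R$, which is strictly positive on $\overline{B_R}$ by the explicit formula, we get $u_t>0$ on $\overline{\Omega_t}$ for $|t|$ small. Hence near $t=0$ the absolute value in \eqref{heat_ins_funs} is inactive, $\partial^*\Omega_t=\partial\Omega_t$, and $u_t$ is the unique minimizer over $H^1(\Omega_t)$ of the smooth, strictly convex functional $v\mapsto\frac12\int_{\Omega_t}|\nabla v|^2+\frac1{2m}\big(\int_{\partial\Omega_t}v\,d{\mathscr H}^{n-1}\big)^2-\int_{\Omega_t}v$ (coercivity being the Poincar\'e inequality with boundary term). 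Pulling this functional back by $F_t$ yields $E(v,t)$ on $H^1(B_R)\times(-\delta,\delta)$, smooth in $(v,t)$, with $g(t)=\min_{v\in H^1(B_R)}E(v,t)$; setting $v_t:=u_t\circ F_t$, the implicit function theorem applied to $\partial_vE(v,t)=0$, whose $v$‑linearization at $u_R$ is the coercive bilinear form $(w,\psi)\mapsto\int_{B_R}\nabla w\cdot\nabla\psi+\frac1m(\int_{\partial B_R}w)(\int_{\partial B_R}\psi)$, shows $t\mapsto v_t$ is $C^2$ near $0$. Write $\dot v:=\frac{d}{dt}\big|_{t=0}v_t$.

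\textbf{Step 2 (first variation).} By the envelope identity $g'(0)=\partial_tE(u_R,0)$, since $\partial_vE(u_R,0)=0$; equivalently, $g'(0)$ is the shape derivative of $\Omega\mapsto\mathcal{J}_m(u_\Omega,\Omega)$ at $B_R$ in the direction $\eta$. By Hadamard's structure theorem this equals $\int_{\partial B_R}\mathcal{G}\,(\eta\cdot\nu)\,d{\mathscr H}^{n-1}$ with $\mathcal{G}$ depending only on the Cauchy data of $u_R$ on $\partial B_R$ (that is, $u_R|_{\partial B_R}$, $\partial_\nu u_R=-R/n$, and the mean curvature $(n-1)/R$); since $u_R$ is radial, $\mathcal{G}$ is constant on $\partial B_R$, so $g'(0)=\mathcal{G}\int_{\partial B_R}\eta\cdot\nu\,d{\mathscr H}^{n-1}=\mathcal{G}\,\frac{d}{dt}\big|_0|\Omega_t|=0$. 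This recovers, under the weaker hypothesis $\eta\perp T_x\partial B_R$, the criticality of $B_R$ proved in \cite{BBN}.

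\textbf{Step 3 (second variation; the heart of the proof).} Differentiating $g'(t)=\partial_tE(v_t,t)$ gives $g''(0)=\partial_{tt}E(u_R,0)+\partial_{vt}E(u_R,0)[\dot v]$, while differentiating $\partial_vE(v_t,t)=0$ gives the linearized equation $\partial_{vv}E(u_R,0)[\dot v,\psi]=-\partial_{vt}E(u_R,0)[\psi]$ for all $\psi\in H^1(B_R)$. Testing with $\psi=\dot v$ and using symmetry of $\partial_{vv}E$ yields
\begin{equation*}
g''(0)=\partial_{tt}E(u_R,0)-\Big(\int_{B_R}|\nabla\dot v|^2\,dx+\frac1m\Big(\int_{\partial B_R}\dot v\,d{\mathscr H}^{n-1}\Big)^2\Big),
\end{equation*}
so it suffices to prove $\partial_{tt}E(u_R,0)\ge\int_{B_R}|\nabla\dot v|^2+\frac1m(\int_{\partial B_R}\dot v)^2$. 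I would now compute both sides explicitly: expand $\det DF_t$, $(DF_t)^{-1}$, and the surface Jacobian of $F_t|_{\partial B_R}$ to second order in $t$ — here the geometric evolution equations enter, through the normal velocity $\varphi:=\eta\cdot\nu|_{\partial B_R}$, the tangential velocity, the acceleration of the flow, and the mean curvature $(n-1)/R$ — then integrate by parts using $-\Delta u_R=1$ and $\partial_\nu u_R=-R/n$ to reduce $\partial_{tt}E(u_R,0)$ to boundary integrals over $\partial B_R$, and invoke the volume constraint $|\Omega_t|\equiv V_0$ to second order (in particular $\int_{\partial B_R}\varphi\,d{\mathscr H}^{n-1}=0$). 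Simultaneously, from the linearized equation one identifies the "shape part" $u':=\dot v-\nabla u_R\cdot\eta$ as the harmonic function in $B_R$ whose normal (Robin/Steklov‑type) data on $\partial B_R$ is an explicit first‑order differential expression in $\varphi$. Substituting these, the inequality should collapse to a single quadratic functional of $\varphi$ over $\{\varphi\in C^\infty(\partial B_R):\int_{\partial B_R}\varphi\,d{\mathscr H}^{n-1}=0\}$, with a manifestly nonnegative contribution $\int_{B_R}|\nabla u'|^2+\frac1m(\int_{\partial B_R}u')^2$ plus a remainder built from $\int_{\partial B_R}|\nabla_\tau\varphi|^2$ and $\int_{\partial B_R}\varphi^2$.

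\textbf{Step 4 (positivity via Steklov eigenvalues).} Nonnegativity of the resulting form is then read off from two sharp eigenvalue inequalities on $B_R$: the first nonzero Steklov eigenvalue is $1/R$, giving $\int_{B_R}|\nabla\phi|^2\ge\frac1R\int_{\partial B_R}\phi^2$ for every $\phi$ harmonic in $B_R$ with $\int_{\partial B_R}\phi\,d{\mathscr H}^{n-1}=0$, and the first nonzero Laplace–Beltrami eigenvalue of $\partial B_R$ is $(n-1)/R^2$, giving $\int_{\partial B_R}|\nabla_\tau\varphi|^2\ge\frac{n-1}{R^2}\int_{\partial B_R}\varphi^2$ for mean‑zero $\varphi$; applied to $u'$ and to $\varphi$ these absorb the negative remainder and yield $g''(0)\ge 0$. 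I expect Step 3 — the precise second‑order flow expansions and the bookkeeping that reduces everything to one quadratic form in $\varphi$ — to be the main obstacle. As a consistency check, the degree‑one spherical harmonics correspond to rigid translations of $B_R$, under which $\mathcal{J}_m$ is exactly invariant since $f\equiv 1$, so they must contribute exactly $0$; this explains why only stability (not strict stability) can hold and pins down several constants in the computation.
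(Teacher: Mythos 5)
Your overall strategy is sound and, at the level of ingredients, coincides with the paper's: criticality follows because the shape--gradient density is constant on $\partial B_R$ (radiality of $u_{B_R}$) while volume preservation forces $\int_{\partial B_R}\eta\cdot\nu\,d\sigma=0$; and the final positivity is obtained exactly from the two sharp spectral facts you cite, the first nonzero Steklov eigenvalue $1/R$ of $B_R$ and the first nonzero Laplace--Beltrami eigenvalue $(n-1)/R^2$ of $\partial B_R$, applied to the linearized state and to the normal velocity $\zeta=\eta\cdot\nu$. Your Lagrangian set-up (pull back to $B_R$, implicit function theorem for $t\mapsto v_t$, envelope identity $g''(0)=\partial_{tt}E(u_R,0)-\partial_{vv}E(u_R,0)[\dot v,\dot v]$) is a legitimate alternative organization to the paper's direct Eulerian differentiation of the first-variation boundary integral \eqref{critical2.5}.

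The genuine gap is that Step 3, which you yourself identify as the heart of the matter, is not carried out, and it is precisely where all the difficulty of the theorem lives. The paper's proof of this step requires: (i) deriving the linearized problem \eqref{eqnvelocity} for the shape derivative $v$, including the nontrivial fact $\int_{\partial B_R} v\,d\sigma=0$ (which uses harmonicity of $v$, the perturbed Robin-type boundary condition, and the first-order volume constraint); (ii) the \emph{second-order} volume constraint \eqref{ling1}, $\int_{\partial B_R}\zeta\,\mathrm{div}\,\eta\,d\sigma=0$, without which the terms $I'(0)$ and $II'(0)$ do not vanish; (iii) the evolution of the unit normal and Huisken's formula \eqref{IV0} for $\partial_t H$, fed through the surface-transport formula \eqref{surface-derivative}; and (iv) exact bookkeeping leading to \eqref{2nd-variation}, in which the coefficients of $\int\zeta^2$ and $\int v\zeta$ are such that the Steklov inequality is used at its sharp constant ($\frac1n\int_{\partial B_R}v\zeta=\int_{B_R}|\nabla v|^2\le\frac{R}{n^2}\int_{\partial B_R}\zeta^2$, with equality attainable). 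Because the final inequality is borderline rather than coercive --- consistent with your own observation that translations contribute exactly zero --- the assertion that the second variation ``should collapse'' to a form absorbed by the two eigenvalue bounds cannot be taken on faith: a single misplaced coefficient or an omitted second-order constraint would destroy the conclusion. Until you actually perform the expansion of $\partial_{tt}E(u_R,0)$ (or, equivalently, differentiate \eqref{critical2.5} as the paper does) and exhibit the resulting quadratic form, the proof of $\frac{d^2}{dt^2}\big|_{t=0}\mathcal{J}_m\ge 0$ is incomplete.
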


\subsection{Some further remarks}
The compactness of $M$-uniform domains with uniformly bounded perimeter was previously proved by
Li-Wang \cite{LiWang}, where the authors consider the minimization problem arising from the liquid crystal droplet
problem:
\begin{align}
\label{liquiddrop}
    J(u,\Omega):= \int_{\Omega} |\nabla u|^2\, dx + P(\Omega),
\end{align}
where $u \in H^1(\Omega, S^2)$ and $|\Omega|=V_0>0$.
If $(u_i, \Omega_i)$ is a minimizing sequence to \eqref{liquiddrop}, then $\Omega_i$ {automatically} have uniformly bounded perimeters and thus have {an} $L^1$ limit up to a subsequence. It was proven in \cite{LiWang} that the limit is 
$\mathcal{L}^n$-equivalent to {an} $M$-uniform domain. 

Motivated by a volume estimate result in \cite{Packingdimension} for general porous domains, we will show that $M$-uniform domains {turn out to} have uniform{ly bounded} nonlocal perimeters, and thus have an $L^1$ limit 
{up to a subsequence} by the fractional Sobolev compact embedding
theorem, see Corollary \ref{fractionalestimate}. {This together with the argument in \cite{LiWang} yields Theorem \ref{stri}. Hence one may also consider problem \eqref{formulation} over $M$-uniform domains of finite perimeters, without additionally requiring that the perimeters are uniformly bounded as assumed in Theorem \ref{existence1}. The difficulty, however, is that even if there is a limit, and the limit of the domains in the minimizing sequence is still an $M$-uniform domain, it might not have finite perimeter and thus the boundary integral term in \eqref{formulation} may not be well-defined. It would be very interesting to prove that minimizing sequence of \eqref{formulation} do have uniformly bounded perimeters, instead of adding this as an assumption.}

A byproduct of the compactness of $M$-uniform domains is an uniform Poincar\'e inequality for such domains, 
{see Theorem \ref{unic}}. In \cite{BC}, {such} an uniform Poincar\'e inequality {was only proved} for uniformly Lipschitz domains. {Hence} Theorem \ref{unic} generalizes this result of \cite{BC}. 

\subsection{Notations}
Throughout this paper, we adopt the standard notations.
For a set $A \subset \mathbb{R}^n$, we let $A^r:=\{x \in \mathbb{R}^n: d(x, A)<r \}$
and $A_r=\big\{x\in\R^n: B_r(x)\subset A\big\}$. ${\mathscr{H}}^{n-1}$ denotes the $(n-1)$-dimensional Hausdorff measure, {and $d_H(\cdot, \cdot)$ denotes the Hausdorff distance between two sets.} $\mathcal{L}^n$ is the Lebesgue measure in $\mathbb{R}^n$. $|A|$ denotes the Lebesgue measure of $A$. $B_r(x)=\{y \in \mathbb{R}^n: |y-x|<r\}$. 
$\partial^* A$ denotes the reduced boundary of $A$. ${\rm{diam}}(A)$ denotes the diameter of $A$. {Also, we always let $\omega_n$ be the volume of the unit ball in $\mathbb{R}^n$.}

We let $\mathcal{M}_R$ be the class of all $M$-uniform domains contained in $B_R$, and $\mathcal{M}_{R,c}$ be the subclass of {$\mathcal{M}_R$ such that any domain in the subclass has} diameter bigger than or equal to $c>0$. We always use $u^*$ to denote the trace of $u$ in the sense of \eqref{cy5'}. {Last, when we say a set is a domain, we mean the set is a connected open set.}

\section{Preliminaries on rough domains} 

We start with some definitions.
\begin{definition} 
\label{D-c}
For $c>0$, $\mathcal{D}_c$ is the class of sets $E$ satisfying
\begin{eqnarray}
\label{g1}
|B_r(x)\cap E|>cr^n
\end{eqnarray} 
for any $x \in \partial E$ and $0<r<{\rm{diam}}(E)$. 
\end{definition}

The next remark say\textcolor{red}{s} that a\textcolor{red}{ny} set in $\mathcal{D}_c$ is $\mathcal{L}^n$-equivalent to its closure.
\begin{remark}
\label{closure}
If $E \in \mathcal{D}_c$, then $E=\overline{E} \, (mod \, \mathcal{L}^n)$. 
\end{remark}
\begin{proof}
By Lebesgue density theorem, if $E \in \mathcal{D}_c$, then $\partial E \subset E \, (mod\, \mathcal{L}^{n})$.
Hence $|\overline{E}\setminus E|=0$.
\end{proof}
\begin{remark}
\label{feihua1}
If $E \in \mathcal{D}_c$, then for any $x \in \overline{E}$ and $0<r<2{\rm{diam}}(E)$, there is $c'=c'(c,n)>0$ 
such that $|B_r(x) \cap E| \ge c'r^n$.  
\end{remark}

\begin{proof}
There are two cases: \\
(a) If $r \ge 2 d(x, \partial E)$, then there is $z \in \partial E$ and $B_{\frac{r}{2}}(z) \subset B_{r}(x)$, hence, $|B_{r}(x) \cap E| \ge |B_{\frac{r}2}(z) \cap E| \ge c\left(\frac{r}{2}\right)^n=2^{-n}c r^n$.\\
(b) If $r \le 2 d(x,\partial E)$, then $B_{\frac{r}{2}}(x) \subset E$. Thus $|B_{r}(x) \cap E| \ge{\omega_n}\left(\frac{r}{2} \right)^n$. \\
Hence there is $c'=c'(c, n)>0$  such that $|B_r(x) \cap E| \ge c'\epsilon ^n$. 
\end{proof}

The next proposition says $M$-uniform domains belong to the class $\mathcal{D}_c$.
\begin{proposition}
\label{shuyu}
If $\Omega$ is {an} $M$-uniform domain, with ${\rm{diam}}(\Omega) \ge c_0>0$, then $\Omega \in \mathcal{D}_c$ for some $c>0$ depending only on $M$, $n$ and $\frac{{\rm{diam}}(E)}{c_0}$.
\end{proposition}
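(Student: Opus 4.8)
The plan is to show that for an $M$-uniform domain $\Omega$ with $\mathrm{diam}(\Omega)\ge c_0>0$, any boundary point $x\in\partial\Omega$ and any radius $0<r<\mathrm{diam}(\Omega)$, a definite fraction of the ball $B_r(x)$ lies inside $\Omega$. The idea is to produce, for each such $x$ and $r$, an interior point $y=y(x,r)\in\Omega$ that is \emph{deep} inside $\Omega$ at scale $r$, in the sense that $d(y,\partial\Omega)\ge \kappa r$ for some $\kappa=\kappa(M,n,\mathrm{diam}(\Omega)/c_0)>0$, while at the same time $|y-x|\le Cr$ with $C$ comparable to $1$. Once such a $y$ is found, one has $B_{\kappa r}(y)\subset\Omega$, and if $\kappa$ is chosen small enough that $B_{\kappa r}(y)\subset B_r(x)$ (which holds as soon as $|y-x|+\kappa r<r$, i.e. $|y-x|<(1-\kappa)r$), we get $|B_r(x)\cap\Omega|\ge |B_{\kappa r}(y)|=\omega_n(\kappa r)^n$, giving the desired estimate with $c=\omega_n\kappa^n$.

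To construct $y$, first I would pick a point $z\in\overline\Omega$ with $|z-x|$ of order $r$: since $\mathrm{diam}(\Omega)\ge c_0$ and $r<\mathrm{diam}(\Omega)$, one can always find $z\in\overline\Omega$ with, say, $\tfrac{r}{4}\le |z-x|\le \tfrac{r}{2}$ (choose a point realizing a distance comparable to $\min\{r,\mathrm{diam}(\Omega)\}$ from $x$ along the set; here the ratio $\mathrm{diam}(\Omega)/c_0$ enters, because if $r$ is close to $\mathrm{diam}(\Omega)$ one needs the diameter to guarantee such a $z$ exists). Now apply the $M$-uniform property (Definition \ref{uniformdomain}) to the pair $x_1=x$, $x_2=z$: there is a rectifiable curve $\gamma\colon[0,1]\to\overline\Omega$ from $x$ to $z$ with ${\mathscr H}^1(\gamma)\le M|x-z|$ and the twisted-cone bound $d(\gamma(t),\partial\Omega)\ge \tfrac1M\min\{|\gamma(t)-x|,|\gamma(t)-z|\}$. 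Pick the point $y=\gamma(t_0)$ on the curve whose distance along $\gamma$ to each endpoint is comparable (e.g. the midpoint in arclength), so that $\min\{|y-x|,|y-z|\}$ is bounded below by a constant times $|x-z|$: indeed $|y-x|\le {\mathscr H}^1(\gamma)\le M|x-z|$ and, since the curve has length at most $M|x-z|$, by choosing $t_0$ so that the arclength from $x$ to $y$ equals $\tfrac12{\mathscr H}^1(\gamma)$ one forces both $|y-x|$ and $|y-z|$ to be at least, say, a fixed multiple of $|x-z|$ — this last point requires a short argument because a curve can backtrack, so I would instead choose $y$ to be the \emph{first} point on $\gamma$ (parametrized by arclength from $x$) at which $|y-x|=\tfrac1{2M}|x-z|$ — such a point exists by continuity since $|\gamma(0)-x|=0$ and $|\gamma(1)-x|=|x-z|>\tfrac1{2M}|x-z|$. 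For this $y$ one has $|y-x|=\tfrac1{2M}|x-z|\le\tfrac12|x-z|$, and also, by the length bound, $|y-z|\ge |x-z|-|y-x|\ge (1-\tfrac1{2M})|x-z|$ is comparable to $|x-z|$. Hence $\min\{|y-x|,|y-z|\}\ge \tfrac1{2M}|x-z|$, so the twisted-cone bound gives $d(y,\partial\Omega)\ge \tfrac1{2M^2}|x-z|\ge \tfrac{1}{8M^2}r$. Thus $\kappa\sim M^{-2}$ works, and $|y-x|\le\tfrac12|x-z|\le\tfrac14 r<(1-\kappa)r$ once $\kappa<\tfrac34$, so the inclusion $B_{\kappa r}(y)\subset B_r(x)\cap\Omega$ holds.

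The main obstacle is the careful bookkeeping in the curve step: a rectifiable curve with ${\mathscr H}^1(\gamma)\le M|x-z|$ need not be monotone in distance from $x$, so one must justify via continuity/intermediate-value reasoning the existence of a point $y$ on $\gamma$ with $|y-x|$ exactly a prescribed small multiple of $|x-z|$ and simultaneously $|y-z|$ comparable to $|x-z|$; the length bound is what prevents $y$ and $z$ from being too close. The only other subtlety is the dependence of the constant on $\mathrm{diam}(\Omega)/c_0$, which enters solely in the initial selection of $z$ at distance $\sim r$ from $x$ when $r$ is a sizeable fraction of $\mathrm{diam}(\Omega)$; for $r\le\tfrac12 c_0$ the ratio plays no role. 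Assembling these pieces gives $c=c(M,n,\mathrm{diam}(\Omega)/c_0)>0$ as claimed, and by Remark \ref{closure} and Remark \ref{feihua1} the estimate then propagates from $x\in\partial\Omega$ to all of $\overline\Omega$, confirming $\Omega\in\mathcal D_c$.
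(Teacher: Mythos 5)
Your proof is correct and follows essentially the same route as the paper: since $r<\mathrm{diam}(\Omega)$ one selects a point of $\overline\Omega$ at distance comparable to $r$ from $x$, applies the $M$-uniform curve condition, and picks a point on the curve at a definite fraction of that distance from $x$ so that the twisted-cone bound yields a ball of radius $\sim r$ inside $B_r(x)\cap\Omega$ (the paper takes the point on $\gamma\cap\partial B_{r/3}(x)$ and gets radius $r/(6M)$, while your choice at distance $|x-z|/(2M)$ gives $r/(8M^2)$ — only the constant differs). Two cosmetic remarks: the lower bound $|y-z|\ge|x-z|-|y-x|$ is just the triangle inequality rather than the length bound, and the final propagation to all of $\overline\Omega$ via Remark \ref{feihua1} is unnecessary since membership in $\mathcal{D}_c$ only requires the estimate at boundary points.
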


\begin{proof}
For any $x \in \partial \Omega$ and $0<r<{\rm{diam}}(\Omega)$, we claim that
there is a constant $c_1=c_1(M)>0$  such that there is a ball of radius $c_1 r$ contained in $B_r(x) \cap \Omega$. Indeed, since $0<r<{\rm{diam}}(\Omega)$, there is $y \in \Omega\setminus B_{\frac{r}2}(x)$. Let $\gamma\subset\overline\Omega$ be the curve connecting $x$ and $y$ as in the definition of $M$-uniform domain.
Choose $z \in \partial B_{\frac{1}{3}r}(x) \cap \gamma$. Then $z \in \Omega$ and $d(z, \partial \Omega) \ge \frac{1}{6M}r$. Hence if we choose $c_1(M)=\frac{1}{6M}$, then $B_{c_1(M)r}(z) \subset B_r(x)\cap \Omega$. 
In particular,  for any $x \in \partial \Omega$ and any $0<r<{\rm{diam}} \Omega$, 
$|B_r(x) \cap \Omega| \ge |B_{c_1(M)r}(z)|\ge c_1(M)r^n$.
\end{proof}

The following remark will be used in the proof of compactness of $M$-uniform domains.
\begin{remark}
\label{bukong}
If $\Omega$ is {an} $M$-uniform domain with $|\Omega| \ge c_0$, then there is $r_0>0$ depending only on $M,n,c_0$ such that $\Omega$ contains a ball of radius $r_0$. 
\end{remark}

\begin{proof}
By isodiametric inequality, there is $c_1=c_1(n)>0$ such that ${\rm{diam}}(\Omega) > c_1$. From the proof of Proposition \ref{shuyu}, $\Omega$ contains a ball of radius $\frac{1}{6M} c_1$.  
\end{proof}

Similarly, we define $\mathcal{D}^c$ as follows.
\begin{definition}
\label{D^c}
For $c>0$,  let $\mathcal{D}^c$ be the class of sets $E$ such that
\begin{eqnarray}
\label{g2}
|B_r(x)\cap E^c|>cr^n
\end{eqnarray} holds for any $x \in \partial E$ and $0<r<{\rm{diam}}(E)$. 
\end{definition}

The following proposition is from \cite[Proposition 12.19]{Maggi}. It says that for any set $E \subset \mathbb{R}^n$, we can find an $\mathcal{L}^n$-equivalent set $\widetilde{E}$ with a slightly better topological boundary such that $\partial \widetilde{E}=spt \mu_E$, where $\mu_E$ is the distributional perimeter measure of $E$. 
\begin{proposition}
\label{equiv}
For any Borel set $E \subset \mathbb{R}^n$, there exists an $\mathcal{L}^n$-equivalent set $\widetilde{E}$ such that $|E\Delta \widetilde{E}|=0$ and for any $x \in \partial \widetilde{E}$ and any $r>0$, 
\begin{eqnarray}
\label{spt}
0<|\widetilde{E} \cap B_r(x)|<\textcolor{red}{\omega_n}r^n.
\end{eqnarray}
In other words, $spt \mu_E=spt\mu_{\widetilde{E}}=\partial \widetilde{E}$.
\end{proposition}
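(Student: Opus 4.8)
The plan is to follow the standard argument (see \cite[Proposition 12.19]{Maggi}) by directly constructing the representative $\widetilde E$ from density considerations. First I would recall the Lebesgue density function $\theta_E(x):=\lim_{r\to0}\frac{|E\cap B_r(x)|}{\omega_n r^n}$, which exists for $\mathcal L^n$-a.e.\ $x$, and partition $\mathbb R^n$ into the three Borel sets
\[
E^{(1)}=\{x:\theta_E(x)=1\},\qquad E^{(0)}=\{x:\theta_E(x)=0\},\qquad E^{(t)}=\mathbb R^n\setminus(E^{(0)}\cup E^{(1)}).
\]
By the Lebesgue density theorem, $|E\Delta E^{(1)}|=0$ and $|E^{(t)}|=0$, so I would take $\widetilde E:=E^{(1)}$; then $|E\Delta\widetilde E|=0$ is immediate.

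The key point is to verify \eqref{spt}, i.e.\ that $0<|\widetilde E\cap B_r(x)|<\omega_n r^n$ for every $x\in\partial\widetilde E$ and every $r>0$. Suppose $x\in\partial\widetilde E$. If $|\widetilde E\cap B_r(x)|=0$ for some $r>0$, then every point of $B_r(x)$ has $E$-density $0$, hence $B_r(x)\cap\widetilde E=\emptyset$, so $B_r(x)$ is an open neighborhood of $x$ disjoint from $\widetilde E$, contradicting $x\in\partial\widetilde E\subset\overline{\widetilde E}$. Symmetrically, if $|\widetilde E\cap B_r(x)|=\omega_n r^n$, then $|B_r(x)\setminus\widetilde E|=0$, so every point of $B_r(x)$ has $E$-density $1$ and thus lies in $\widetilde E$; this makes $B_r(x)$ a neighborhood of $x$ contained in $\widetilde E$, contradicting $x\in\partial\widetilde E\subset\overline{\mathbb R^n\setminus\widetilde E}$. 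Here I am using that $\widetilde E=E^{(1)}$ is defined purely in terms of the $E$-density, so "having density $1$" and "belonging to $\widetilde E$" are synonymous, and likewise for density $0$; this is exactly the feature that fails for a generic $\mathcal L^n$-representative and is the reason the particular choice $\widetilde E=E^{(1)}$ is forced.

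Finally I would record the consequence $\spt\mu_E=\partial\widetilde E$. Since $\mu_E=\mu_{\widetilde E}$ (the distributional perimeter depends only on the $\mathcal L^n$-class), it suffices to show $\spt\mu_{\widetilde E}=\partial\widetilde E$. A point $x\notin\partial\widetilde E$ has a ball $B_r(x)$ with either $|\widetilde E\cap B_r(x)|=0$ or $|B_r(x)\setminus\widetilde E|=0$; in either case $\chi_{\widetilde E}$ is ($\mathcal L^n$-a.e.) constant on $B_r(x)$, so $\mu_{\widetilde E}(B_r(x))=0$ and $x\notin\spt\mu_{\widetilde E}$. Conversely, if $x\in\partial\widetilde E$ then by \eqref{spt} both $|\widetilde E\cap B_r(x)|>0$ and $|B_r(x)\setminus\widetilde E|>0$ for all $r>0$, and a short argument — e.g.\ via the relative isoperimetric inequality in $B_r(x)$, which bounds $\min\{|\widetilde E\cap B_r(x)|,|B_r(x)\setminus\widetilde E|\}^{(n-1)/n}$ by a constant times $\mu_{\widetilde E}(B_r(x))$ — forces $\mu_{\widetilde E}(B_r(x))>0$, so $x\in\spt\mu_{\widetilde E}$. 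The main obstacle, such as it is, is this last implication: one must rule out that $\widetilde E$ meets $B_r(x)$ in a set of positive measure and positive complement-measure while still having zero perimeter there, and the clean way to do that is the relative isoperimetric inequality rather than any hands-on estimate.
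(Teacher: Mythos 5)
Your proof is correct, but note that the paper itself gives no proof of this proposition: it is imported verbatim from \cite[Proposition 12.19]{Maggi}, so the comparison is with the argument there rather than with anything in the text. Your construction $\widetilde E=E^{(1)}$ (the set of Lebesgue density-one points) is a genuine alternative to the one used in Maggi, which instead modifies $E$ by the two \emph{open} sets of points admitting a ball of full measure, respectively of zero measure, and takes $\widetilde E$ to be $E$ with the first set added and the second removed; there the verification of \eqref{spt} comes from unwinding the definition of those open sets, while in your version it comes from the self-referential character of $E^{(1)}$ (membership in $\widetilde E$ is synonymous with having density one), which you correctly single out as the point that fails for a generic representative. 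Both routes are elementary and equally short; yours has the mild advantage of producing a canonical representative, the measure-theoretic interior. One refinement for the final assertion ${\rm spt}\,\mu_{\widetilde E}=\partial\widetilde E$: the relative isoperimetric inequality is only literally applicable when $E$ has locally finite perimeter (the proposition is stated for an arbitrary Borel set, for which $\mu_E$ is a priori only a distribution), and it is not needed: if $x\notin {\rm spt}\, D\chi_{\widetilde E}$ then the distributional gradient of $\chi_{\widetilde E}$ vanishes on some ball $B_r(x)$, hence $\chi_{\widetilde E}$ is a.e.\ constant there, which contradicts \eqref{spt} directly and covers the infinite-perimeter case as well. This is a simplification, not a gap; your argument is complete in the finite-perimeter setting in which the proposition is actually used in the paper.
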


The next Lemma concerns the $L^1$-convergence of sets in $\mathcal{D}_c$.
\begin{lemma}
\label{neijin}
Suppose $D_i \subset B_{R_0}$ is a sequence of sets in $\mathcal{D}_c$ such that $D_i \rightarrow D$ in $L^1$. If we identify $D$ with its $\mathcal{L}^n$-equivalent set $\widetilde{D}$ as in Proposition \ref{equiv}, then $D \in \mathcal{D}_c$. Moreover, for any $\epsilon>0$, there is a positive integer $N=N(\epsilon)$ such that for $i>N$, the following properties holds:\\
(i) $D \subset D_i^{\epsilon}$.\\
(ii) $(D_i)_{\epsilon} \subset D$.\\
(iii) $D_i \subset D^{\epsilon}$. \\
In particular, $D_i$ converges to $D$ in the Hausdorff distance, i.e. $d_H(D_i,D) \rightarrow 0$ as $i\to \infty$. 
\end{lemma}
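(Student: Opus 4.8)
The plan is to establish the density lower bound for the limit set first, then the three one-sided inclusions, and finally deduce Hausdorff convergence. I would begin by verifying $D \in \mathcal{D}_c$. Fix $x \in \partial\widetilde D$ and $0 < r < \mathrm{diam}(D)$. Since $D_i \to D$ in $L^1$, the symmetric differences $|D_i \Delta D| \to 0$, so $|D_i \cap B_r(x)| \to |D \cap B_r(x)|$ for every fixed ball. If $|D \cap B_r(x)| \le c r^n$, I want a contradiction. The point is that $x \in \partial\widetilde D$ forces $0 < |D \cap B_\rho(x)| < \omega_n \rho^n$ for all $\rho > 0$ by Proposition \ref{equiv}. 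Pick $\rho$ slightly larger than $r$ but still below $\mathrm{diam}(D)$; then $B_r(x)$ meets both $D$ and $D^c$ in sets of positive measure, so one can find points of $\partial D_i$ near $x$ for $i$ large (using that $|D_i \cap B_s(x)|$ is close to $|D \cap B_s(x)|$ for a couple of radii $s$, hence $D_i$ is neither full nor empty in a slightly smaller ball). Then applying \eqref{g1} at such a point $x_i \in \partial D_i$ with a radius of order $r$ gives $|D_i \cap B_{r}(x)| \gtrsim c r^n$ up to a fixed dimensional factor; passing to the limit contradicts the assumed smallness. A cleaner route: show $\partial\widetilde D \subset \limsup_i \partial D_i$ in the sense that every point of $\partial\widetilde D$ is a limit of points $x_i \in \partial D_i$, apply Remark \ref{feihua1}-type reasoning at $x_i$ with radius $r/2$ inside $B_r(x)$, and take $i \to \infty$. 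I expect this verification of the density bound for $D$ — specifically, producing nearby boundary points of $D_i$ — to be the main obstacle, since it is where the topological content of Proposition \ref{equiv} interacts with the purely measure-theoretic $L^1$ convergence.

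For (i), suppose not: then along a subsequence there are points $y_i \in D$ (using $D=\widetilde D$, so $D$ is genuinely a set, and in fact $y_i$ in the interior up to null sets) with $d(y_i, D_i) \ge \epsilon$, i.e. $B_\epsilon(y_i) \cap D_i = \emptyset$. After passing to a further subsequence $y_i \to y \in \overline D$. Then $B_{\epsilon/2}(y) \cap D_i = \emptyset$ for $i$ large, so $|B_{\epsilon/2}(y) \cap D| = \lim_i |B_{\epsilon/2}(y) \cap D_i| = 0$. But $y \in \overline D$ and $D \in \mathcal{D}_c$, so by Remark \ref{feihua1} (note $\epsilon/2 < 2\,\mathrm{diam}(D)$ for $\epsilon$ small, and for large $\epsilon$ the statement (i) is trivial) we get $|B_{\epsilon/2}(y) \cap D| \ge c'(\epsilon/2)^n > 0$, a contradiction. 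Statement (iii) is symmetric: if $x_i \in D_i$ with $B_\epsilon(x_i) \cap D = \emptyset$, pass to a limit $x_i \to x$; then $B_{\epsilon/2}(x) \cap D = \emptyset$, yet $|B_{\epsilon/2}(x) \cap D| = \lim_i |B_{\epsilon/2}(x) \cap D_i| \ge c'(\epsilon/2)^n$ since $D_i \in \mathcal{D}_c$ and $x$ is a limit of points of $D_i$ hence lies in $\overline{D_i}$ for the relevant indices — again contradiction via Remark \ref{feihua1} applied to $D_i$.

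For (ii) I argue on the complement, which is where $\mathcal{D}^c$-type information would help, but here we only have $D_i \in \mathcal{D}_c$; the point $(D_i)_\epsilon \subset D$ means: if $B_\epsilon(z) \subset D_i$ then $z \in D$. Equivalently, a point $z$ with $d(z, \partial D) \ge \epsilon$ ... actually the right formulation: suppose $B_\epsilon(z) \subset D_i$ but $z \notin D$; since $D = \widetilde D$, $z \notin D$ puts $z$ in the open set $(\overline D)^c$ or on $\partial D$. If $z \in (\overline D)^c$ there is a small ball $B_\delta(z) \subset D^c$, so $|B_\delta(z) \cap D| = 0$, but $B_\delta(z) \subset B_\epsilon(z) \subset D_i$ gives $|B_\delta(z) \cap D_i| = \omega_n\delta^n$, contradicting $L^1$ convergence for $i$ large. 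If $z \in \partial D$, then since $D \in \mathcal{D}^c$... — here I must use that $D$ also satisfies an exterior density bound. If the hypotheses only give $\mathcal{D}_c$ and not $\mathcal{D}^c$, then (ii) as stated should be read so that this case does not arise (e.g. because $(D_i)_\epsilon$ avoids a neighborhood of $\partial D_i$ and hence, via (iii) and (i), avoids $\partial D$); I would handle it by combining: $B_\epsilon(z) \subset D_i$ and (iii) $D_i \subset D^{\epsilon/2}$ for large $i$ would need care, so instead I use that $z$ has a whole $\epsilon$-ball in $D_i$, pick any $\rho < \epsilon$, and note $|B_\rho(z)\cap D| = \lim_i |B_\rho(z)\cap D_i| = \omega_n\rho^n$, so $z$ is a Lebesgue density point of $D$, hence $z \in D$ by Remark \ref{closure} (as $D = \widetilde D \in \mathcal{D}_c$, $D$ equals its measure-theoretic interior plus the boundary is negligible, and a density-one point cannot lie on $\partial\widetilde D$ by \eqref{spt}). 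This contradiction closes (ii). Finally, (i) gives $D \subset D_i^\epsilon$ and (iii) gives $D_i \subset D^\epsilon$ for $i > N(\epsilon)$, which is exactly $d_H(D_i, D) \le \epsilon$; letting $\epsilon \to 0$ yields $d_H(D_i, D) \to 0$.
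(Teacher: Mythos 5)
Your overall strategy is the same as the paper's: push the interior density bound of the $D_i$ (Remark \ref{feihua1} / \eqref{g1}) and the normalization \eqref{spt} of Proposition \ref{equiv} onto limit points of putative bad points. Your treatment of (i), of (iii), and of $D\in\mathcal{D}_c$ (via boundary points of $D_i$ accumulating at points of $\partial\widetilde{D}$; the paper gets by with interior points of $D_i$ plus Remark \ref{feihua1}, which is slightly simpler) is correct in substance. One small repair in (iii): the justification that the limit point $x$ ``lies in $\overline{D_i}$ for the relevant indices'' is not literally true for any fixed $i$; instead apply Remark \ref{feihua1} at $x_i\in D_i$ with radius $\epsilon/4$, use $B_{\epsilon/4}(x_i)\subset B_{\epsilon/2}(x)$ for large $i$, and compare with $|D_i\Delta D|$ --- which is exactly the paper's computation, centered at $x_i$ with radius $\epsilon$.

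The one step that fails as written is (ii). The negation of (ii) yields, for infinitely many $i$, a point $z_i\in (D_i)_\epsilon\setminus D$ which depends on $i$, whereas you argue with a single fixed $z$. In your first case the radius $\delta$ of the exterior ball depends on $z=z_i$ and can shrink as $z_i$ approaches $\partial D$, so ``$|B_\delta(z)\cap D_i|=\omega_n\delta^n$ contradicts $L^1$ convergence for $i$ large'' is not a uniform contradiction; in your second case the identity $|B_\rho(z)\cap D|=\lim_i|B_\rho(z)\cap D_i|=\omega_n\rho^n$ uses simultaneously that $z$ is fixed as $i\to\infty$ and that $B_\epsilon(z)\subset D_i$ for the same varying $i$, which is not part of the hypothesis. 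The repair is the compactness step you already use in (i) and (iii), and it is the paper's argument: extract $z_i\to z_0$, note that for every $\rho<\epsilon$ one has $B_\rho(z_0)\subset B_\epsilon(z_i)\subset D_i$ for $i$ large, hence $|B_\rho(z_0)\cap D|=\omega_n\rho^n$; by \eqref{spt} this rules out $z_0\in\partial\widetilde{D}$, and since these balls carry positive $D$-measure, $z_0\in\mathrm{int}(D)$, so $z_i\in D$ for large $i$, contradicting $z_i\notin D$. With that fix (and the minor adjustment in (iii)) your proof is complete, and the final deduction of $d_H(D_i,D)\to 0$ from (i) and (iii) is fine.
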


\begin{proof} We argue by contradiction. If (i) were false, then there would exist $x \in D$ such that $B_{\epsilon}(x) \cap D_i=\emptyset$ for $i$ sufficiently large. Hence by the hypothesis and Proposition \ref{equiv}, 
we obtain $0=|B_{\epsilon}(x) \cap D_i|\rightarrow |B_{\epsilon}(x) \cap D|>0$, a contradiction.\\
If (ii) were false, then there would be a sequence $x_i \in (D_i)_{\epsilon} \setminus D$. We may assume $x_i \rightarrow x_0$. Thus $x_0 \in \partial D \cup D^c$. By Proposition \ref{equiv}, we have ${\omega_n}\epsilon^n>|B_{\epsilon}(x_0) \cap D|$. On the other hand, since $B_{\epsilon}(x_i)\subset D_i$, it follows 
\begin{eqnarray*}
|B_{\epsilon}(x_0) \cap D|=\lim_{i \rightarrow \infty}|B_{\epsilon}(x_i) \cap D|
&\ge&\liminf_{i \rightarrow \infty} (|B_{\epsilon}(x_i) \cap D_i|-|D_i \Delta D|)\\
&=&{\omega_n}\epsilon^n-\limsup_{i \rightarrow \infty}|D_i \Delta D|={\omega_n}\epsilon^n,
\end{eqnarray*} 
which is impossible.\\
If (iii) were false, then there would exist a subsequence of $x_i \in D_i\setminus D^{\epsilon}$. Without loss of generality, assume $x_i \rightarrow x_0 \in {\mathbb{R}^n\setminus D^{\epsilon}}$. For any $i$, by Remark \ref{feihua1}, there is $c'>0$ depending only on $c$ and $n$ such that $c'\epsilon ^n \le |B_{\epsilon}(x_i) \cap D_i|.$ On the other hand, since $|B_{\epsilon}(x_0) \cap D|=0$, it follows 
\begin{eqnarray*}
\liminf_{i \rightarrow \infty}|B_{\epsilon}(x_i) \cap D_i| &\le& \limsup_{i\rightarrow \infty}(|B_{\epsilon}(x_i) \cap D|+|D \Delta D_i|) \\
&\le& |B_{\epsilon}(x_0) \cap D|+\limsup_{i \rightarrow \infty} |D_i \Delta D|=0,
\end{eqnarray*}
which is a contradiction.\\
It remains to show $D \in \mathcal{D}_c$. Since $D_i \rightarrow D$ in $L^1$, for any $x \in \partial D$ there is $x_i \in D_i$ such that $x_i \rightarrow x$.  Hence for any $r>0$, {by Remark \ref{feihua1} we have} 
$$|B_r(x) \cap D|=\lim_i |B_r(x_i) \cap D| \ge \liminf_i |B_r(x_i) \cap D_i|-\limsup_i|D_i \Delta D| \ge \textcolor{red}{c'}r^n.$$ Hence $D \in \mathcal{D}_c$.  
\end{proof}

The following remarks {follow} immediately from (i) and (iii) in the above Lemma.

\begin{remark}
\label{new2}
If $D_i$ and $D$ satisfy the same assumption as in Lemma \ref{neijin}, and if ${\rm{int}}(D) \ne \emptyset$, 
then ${\rm{int}}(D)$ is a domain.  If in addition $|{\rm{int}}(D)|=|D|$, then ${\rm{int}}(D) \in \mathcal{D}_c$ 
and $D_i \rightarrow {\rm{int}}(D)$ in $L^1$.
\end{remark}


For sets in $\mathcal{D}^c$, we have the following result, which is similar to Lemma \ref{neijin}. 
\begin{lemma}
\label{waijin}
If $D_i \in \mathcal{D}^c$ and $D_i \rightarrow D$ in $L^1$, and we identify $D$ with its $\mathcal{L}^n$-equivalent set $\widetilde{D}$ as in Proposition \ref{equiv}, then $D \in \mathcal{D}^c$. Moreover, for any $\epsilon>0$, there is a positive integer $N=N(\epsilon)$ such that for $i>N$, the following properties holds:\\
(i) $D \subset D_i^{\epsilon}$.\\
(ii) $(D_i)_{\epsilon} \subset D$.\\
(iii') $D_{\epsilon} \subset D_i$. \\
\end{lemma}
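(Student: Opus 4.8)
The plan is to mimic the proof of Lemma \ref{neijin}, using the density lower bound \eqref{g2} for the complements $E^c$ in place of the bound \eqref{g1} for $E$, and exploiting the duality between the two situations wherever possible. First I would record the "dual" version of Remark \ref{feihua1}: if $E\in\mathcal D^c$ then for every $x\in\overline{E^c}=\overline{\R^n\setminus E}$ and $0<r<2\,{\rm diam}(E)$ one has $|B_r(x)\cap E^c|\ge c'r^n$ with $c'=c'(c,n)$; the proof is the same two-case argument (either $r\ge 2d(x,\partial E)$, in which case a ball $B_{r/2}(z)$ about a boundary point sits inside $B_r(x)$, or $r\le 2d(x,\partial E)$, in which case $B_{r/2}(x)\subset E^c$). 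I would also note the elementary fact that $\{E_i\to E$ in $L^1\}$ is equivalent to $\{E_i^c\to E^c$ in $L^1$ on any fixed large ball$\}$, since $|E_i\Delta E|=|E_i^c\Delta E^c|$ locally.

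Next, for the assertion $D\in\mathcal D^c$: given $x\in\partial\widetilde D$, by Proposition \ref{equiv} we have $0<|B_r(x)\cap\widetilde D|<\omega_n r^n$ for all $r>0$, so $0<|B_r(x)\cap\widetilde D^c|$ as well; in particular $x\in\overline{D^c}$, so there exist $x_i\in D_i^c$ with $x_i\to x$ (using that $D_i^c$ is dense near any point of $\partial\widetilde D$ — more carefully, one picks $x_i\in \overline{D_i^c}$ via the $L^1$-convergence, as in Lemma \ref{neijin}). Then for fixed $r$,
\begin{align*}
|B_r(x)\cap D^c|=\lim_i|B_r(x_i)\cap D^c|\ge \liminf_i|B_r(x_i)\cap D_i^c|-\limsup_i|D_i\Delta D|\ge c'r^n
\end{align*}
by the dual Remark applied to $D_i$, giving $D\in\mathcal D^{c'}$ (and one can run the argument for every $r<{\rm diam}(D)$ once ${\rm diam}(D_i)\to{\rm diam}(D)$ is known, or simply accept the constant $c'$).

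For the three inclusions: (i) $D\subset D_i^\epsilon$ and (ii) $(D_i)_\epsilon\subset D$ are \emph{identical} to parts (i) and (ii) of Lemma \ref{neijin} — their proofs used only the $L^1$-convergence $D_i\to D$ together with Proposition \ref{equiv} (namely $|B_\epsilon(x)\cap D|>0$ for $x\in D$ and $|B_\epsilon(x)\cap D|<\omega_n\epsilon^n$ for $x\notin{\rm int}\,D$), neither of which mentions $\mathcal D_c$ versus $\mathcal D^c$; so I would just invoke those arguments verbatim. The genuinely new statement is (iii'): $D_\epsilon\subset D_i$ for large $i$. Here I argue by contradiction exactly as in part (iii) of Lemma \ref{neijin} but with the roles of $D$ and the complements swapped. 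If (iii') fails there is (along a subsequence) $x_i\in D_\epsilon\setminus D_i$, so $B_\epsilon(x_i)\subset D$ and $x_i\in\overline{D_i^c}$; passing to a limit $x_i\to x_0$ we get $B_\epsilon(x_0)\subset\overline D$, hence $|B_\epsilon(x_0)\cap D^c|=0$ (after identifying $D$ with $\widetilde D$), while the dual Remark gives $c'\epsilon^n\le|B_\epsilon(x_i)\cap D_i^c|$ for every $i$; but
\begin{align*}
\liminf_i|B_\epsilon(x_i)\cap D_i^c|\le\limsup_i\big(|B_\epsilon(x_i)\cap D^c|+|D\Delta D_i|\big)\le|B_\epsilon(x_0)\cap D^c|+\limsup_i|D_i\Delta D|=0,
\end{align*}
a contradiction.

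The only mild obstacle I anticipate is bookkeeping at the level of $\overline D$ versus ${\rm int}\,D$ versus $\widetilde D$: one must be consistent about which $\mathcal L^n$-representative is used when writing inclusions like $B_\epsilon(x_0)\subset\overline D$ and when concluding $|B_\epsilon(x_0)\cap D^c|=0$, and one must justify the existence of the approximating sequences $x_i\in\overline{D_i^c}$ from $L^1$-convergence (again this is the same point already handled silently in Lemma \ref{neijin}). None of this is deep; it is a routine dualization of the preceding lemma, and no new idea is required.
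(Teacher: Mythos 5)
Your proof is correct and is exactly the argument the paper intends: the paper states Lemma \ref{waijin} without proof, noting only that it is ``similar to Lemma \ref{neijin}'', and your dualization---parts (i)--(ii) taken over verbatim, a complement version of Remark \ref{feihua1}, and the contradiction argument for (iii') with the roles of $D_i$ and $D_i^c$ swapped---is precisely that similarity made explicit. The one step to tidy is ``$B_\epsilon(x_0)\subset\overline D$ hence $|B_\epsilon(x_0)\cap D^c|=0$'': since every point of the open ball $B_\epsilon(x_0)$ lies in $B_\epsilon(x_i)\subset D$ for $i$ large, you in fact get $B_\epsilon(x_0)\subset D$ directly, so no assumption on $|\partial D|$ is needed.
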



\section{Proof of Theorem \ref{stri}}

In this section, we will prove Theorem \ref{stri}. We start with the following two Lemmas.

\begin{lemma}
\label{volume1}
Let $\Omega$ be {an} $M$-uniform domain in $B_R\subset \mathbb{R}^n$ with ${\rm{diam}}(\Omega) \ge c_0>0$, then there exists constants $\delta=\delta(M,n) \in (0,1]$ and $C=C(c_0,M,R,n)>0$ such that 
\begin{eqnarray}
\label{starw}
|(\partial \Omega)^r| \le Cr^{\delta}, \quad \forall r \in (0,1].
\end{eqnarray}
\end{lemma}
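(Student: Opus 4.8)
The plan is to establish the Minkowski-content-type estimate $|(\partial\Omega)^r|\le Cr^\delta$ by covering the $r$-neighborhood of $\partial\Omega$ efficiently with balls, using the interior corkscrew property (Proposition \ref{shuyu}) to control the count of those balls. First I would fix $r\in(0,1]$ and take a maximal $r$-separated set $\{x_j\}_{j=1}^{N}\subset\partial\Omega$, so that the balls $B_{r}(x_j)$ cover $\partial\Omega$ and the balls $B_{r/2}(x_j)$ are pairwise disjoint; then $(\partial\Omega)^r\subset\bigcup_j B_{2r}(x_j)$, hence $|(\partial\Omega)^r|\le N\omega_n(2r)^n$. The whole estimate therefore reduces to bounding the packing number $N$ by $C r^{\delta-n}$.

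To bound $N$, I would invoke Proposition \ref{shuyu}: since $\Omega$ is $M$-uniform with ${\rm diam}(\Omega)\ge c_0$, it lies in the class $\mathcal{D}_c$ with $c=c(M,n,R/c_0)$, so for each $x_j\in\partial\Omega$ and each $\rho<{\rm diam}(\Omega)$ we have $|B_\rho(x_j)\cap\Omega|\ge c\rho^n$. The disjoint balls $B_{r/2}(x_j)$ give a lower volume bound, but this alone only yields $N\le C r^{-n}$, which is not enough — the gain of the factor $r^{\delta}$ must come from a genuinely sub-volume estimate on $|(\partial\Omega)^r|$ itself. This is the crux: one needs that $\partial\Omega$, while possibly non-rectifiable, still has upper box dimension strictly less than $n$, quantitatively $n-\delta$ with $\delta=\delta(M,n)$.

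The mechanism I expect to use is an iteration on dyadic scales exploiting both the corkscrew condition from $\mathcal{D}_c$ and its exterior counterpart. By Proposition \ref{shuyu} applied to the complement-type condition (or by a direct argument: at any boundary point $x$ and scale $\rho$, the $M$-uniform curve joining a point near $x$ to a far interior point forces a definite-size ball inside $B_\rho(x)\cap\Omega$, and symmetrically the domain cannot fill $B_\rho(x)$), there is a fixed fraction $\theta=\theta(M,n)\in(0,1)$ such that for every $x\in\partial\Omega$ and every dyadic scale, a definite portion of $B_\rho(x)$ lies at distance $\ge c_1(M)\rho$ from $\partial\Omega$, i.e. outside $(\partial\Omega)^{c_1\rho}$. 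Running this over a Whitney/dyadic decomposition: if $N_k$ is the number of dyadic cubes of side $2^{-k}$ meeting $\partial\Omega$, the corkscrew property shows each such cube at the next scale loses a fixed fraction of its children to the "deep interior," giving $N_{k+1}\le (2^n-\kappa)N_k$ for some $\kappa=\kappa(M,n)>0$, hence $N_k\le C(2^n-\kappa)^k = C\,2^{k(n-\delta)}$ with $\delta=\log_2\!\frac{2^n}{2^n-\kappa}>0$. Choosing $k$ with $2^{-k}\sim r$ then yields $|(\partial\Omega)^r|\le C N_k\, 2^{-kn}\le C r^n\cdot r^{-n+\delta} = C r^\delta$, with $C=C(c_0,M,R,n)$ absorbing the dependence on how many scales separate $r$ from ${\rm diam}(\Omega)\in[c_0,2R]$.

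The main obstacle is making the dyadic-loss step $N_{k+1}\le(2^n-\kappa)N_k$ rigorous and uniform in the family: one must verify that the corkscrew ball guaranteed by the $M$-uniform condition at scale $2^{-k}$ around a boundary point genuinely removes at least one (indeed a fixed fraction) of the $2^n$ dyadic children from the set of cubes meeting $\partial\Omega$ at scale $2^{-(k+1)}$, which requires a careful but elementary geometric bookkeeping comparing the fixed radius $c_1(M)2^{-k}$ against the child cube size $2^{-(k+1)}$ — if $c_1(M)$ is too small one simply passes to coarser blocks of $j_0=j_0(M,n)$ dyadic generations at a time, where a ball of radius $c_1(M)2^{-k}$ does contain a full sub-cube, and runs the recursion on that coarser grid. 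Everything else (the covering, the disjointness volume bound, extracting $C(c_0,M,R,n)$) is routine.

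\begin{remark}
An alternative, and perhaps cleaner, route avoids the explicit dyadic recursion: combine the two-sided corkscrew bound with the relative isoperimetric inequality to get, for each boundary ball $B_\rho(x)$, a perimeter lower bound $P(\Omega;B_\rho(x))\ge c\rho^{n-1}$ as well as the volume bounds, and then estimate $|(\partial\Omega)^r|$ via a covering together with the classical fact that a $\mathcal D_c\cap\mathcal D^c$ set has finite upper Minkowski $(n-\delta)$-content; but this still ultimately rests on the same scale-invariant loss of volume, so the dyadic iteration seems the most self-contained presentation given the tools already developed in Sections 1--2.
\end{remark}
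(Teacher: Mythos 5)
Your route is genuinely different from the paper's. The paper never counts dyadic cubes: it fixes a Vitali family $\mathcal B$ of disjoint Whitney-type balls $B_{r_z}(z)$, $r_z=d(z,\partial\Omega)/15$, whose dilates $5B$ cover $B_R\setminus\partial\Omega$, and shows, by running the $M$-uniform curve condition at every dyadic scale $2^{-j}$ with $k_0\le j\le k$, that each point of $(\partial\Omega)^{2^{-k}}$ is covered at least $\frac{k-k_0}{3}$ times by the dilated family $\{45MB\}$; an $L^m$ bound on $\sum_{B\in\mathcal B}\chi_{45MB}$ via the Hardy--Littlewood maximal inequality then gives exponential integrability of this overlap function and hence $|(\partial\Omega)^{2^{-k}}|\le C2^{-k\delta}$. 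Your proposal is instead the classical ``porous sets have upper box dimension at most $n-\delta$'' argument: the interior corkscrew ball of Proposition \ref{shuyu} makes $\partial\Omega$ porous with constant $c_1(M)=\frac1{6M}$, and a dyadic counting recursion converts porosity into the Minkowski estimate. That scheme can be made to work and is arguably more elementary. Note, however, that only the \emph{interior} corkscrew is needed (a hole contained in $\Omega$ is automatically disjoint from $\partial\Omega$); your parenthetical appeal to an exterior counterpart should be dropped, since it is false for general $M$-uniform domains (for the complement of the $4$-corner Cantor set one has $|B_\rho(x)\setminus\Omega|=0$ at many boundary points), and for the same reason the isoperimetric alternative in your closing remark is not available here --- the lemma is precisely the statement that does not assume finite perimeter.

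There is one genuine gap in the pivotal recursion $N_{k+j_0}\le(2^{nj_0}-\kappa)N_k$ as you justify it. If $Q$ is a dyadic cube of side $2^{-k}$ meeting $\partial\Omega$ and $x\in Q\cap\partial\Omega$, the corkscrew ball is only known to lie in $B_\rho(x)$ with $\rho$ comparable to the side of $Q$; since $x$ may sit at the edge of $Q$, this hole may lie entirely in a neighboring cube, so it does not follow that $Q$ loses any of \emph{its own} descendants, and attributing the hole to the neighbor creates a double-counting problem (the neighbor need not meet $\partial\Omega$ at all, in which case its descendants are not among the $2^{nj_0}N_k$ being estimated). Your $j_0$-block device only cures the other difficulty (hole radius versus cube size). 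The standard repair is a centering case analysis: for each generation-$k$ cube $Q$ meeting $\partial\Omega$, either the generation-$(k+j_0)$ descendant containing the center of $Q$ misses $\partial\Omega$, and then $Q$ already has a free descendant; or it contains a point $x\in\partial\Omega$ within $\sqrt n\,2^{-(k+j_0)}$ of the center of $Q$, and applying the corkscrew at $x$ with radius $\rho=2^{-k-2}$ (admissible because $\rho<c_0\le \mathrm{diam}(\Omega)$ at the small scales, the large scales of $r$ being trivial as you note) keeps $B_\rho(x)\subset Q$, so that for $2^{-j_0}\le \frac{c_1(M)}{8\sqrt n}$ the hole contains a full generation-$(k+j_0)$ subcube of $Q$ missing $\partial\Omega$. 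Either way $Q$ loses at least one descendant, giving $N_{k+j_0}\le(2^{nj_0}-1)N_k$, hence $N_k\le C\,2^{k(n-\delta)}$ with $\delta=\delta(M,n)$, and your covering reduction then yields \eqref{starw} with $C=C(c_0,M,R,n)$ (the base scale and the trivial range $r\gtrsim c_0$ supplying the dependence on $c_0$ and $R$). With this step repaired, your argument is a complete and correct alternative proof of Lemma \ref{volume1}.
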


\begin{lemma}
\label{maincompact}
If $\Omega_i$ is a sequence of $M$-uniform domains in $B_{{R}}$ such that ${\rm{diam}}(\Omega_i) \ge c>0$ and $\Omega_i \rightarrow D$ in $L^1$, then there is {an} $M$-uniform domain $\Omega$ such that $\Omega_i \rightarrow \Omega$ in $L^1$.
\end{lemma}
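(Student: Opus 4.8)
The plan is to identify $D$ with its good representative $\widetilde D$ from Proposition~\ref{equiv} and to show that $\Omega:={\rm int}(D)$ is the $M$-uniform domain we are after. First I would record the uniform structure of the sequence: since every $\Omega_i\subset B_R$ has ${\rm diam}(\Omega_i)\ge c$, Proposition~\ref{shuyu} gives a single constant $c'=c'(M,n,R,c)>0$ with $\Omega_i\in\mathcal{D}_{c'}$ for all $i$, and, testing the defining inequality with $x\in\partial\Omega_i$ and $r=c/2$, a uniform volume bound $|\Omega_i|\ge c_0:=c'(c/2)^n>0$; hence by Remark~\ref{bukong} each $\Omega_i$ contains a ball $B_{r_0}(p_i)$ with $r_0=r_0(M,n,c_0)$ independent of $i$. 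Lemma~\ref{neijin} then applies to $\{\Omega_i\}\subset\mathcal{D}_{c'}$: identifying $D=\widetilde D$, we obtain $D\in\mathcal{D}_{c'}$, $d_H(\Omega_i,D)\to0$, and the inclusions (i)--(iii) there. Next, passing to a subsequence with $p_i\to p_0$, for large $i$ one has $B_{r_0/2}(p_0)\subset B_{r_0}(p_i)\subset\Omega_i$, so $|B_{r_0/2}(p_0)\setminus\widetilde D|\le|\Omega_i\Delta D|\to0$; combined with the density normalization \eqref{spt} of $\widetilde D$ this forces $B_{r_0/2}(p_0)\subset{\rm int}(D)$, so ${\rm int}(D)\neq\emptyset$ and $\Omega={\rm int}(D)$ is a domain by Remark~\ref{new2}.

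The main step is to show $\Omega$ is $M$-uniform. Given $x_1,x_2\in\overline\Omega\subset\overline D$ (the case $x_1=x_2$ being trivial), I would use $d_H(\overline{\Omega_i},\overline D)\to0$ to choose $x_k^i\in\overline{\Omega_i}$ with $x_k^i\to x_k$, take the $M$-uniform curves $\gamma_i\subset\overline{\Omega_i}$ joining $x_1^i$ to $x_2^i$ from Definition~\ref{uniformdomain}, reparametrize them to constant speed on $[0,1]$ (so their Lipschitz constants are $\mathscr{H}^1(\gamma_i)\le M|x_1^i-x_2^i|\to M|x_1-x_2|$), and extract, via the Arzel\`a--Ascoli theorem inside the compact set $\overline{B_R}$, a uniform limit $\gamma:[0,1]\to\overline D$ with $\gamma(0)=x_1$, $\gamma(1)=x_2$ and $\mathscr{H}^1(\gamma)\le M|x_1-x_2|$, which is property (i). For (ii), writing $m(t)=\min\{|\gamma(t)-x_1|,|\gamma(t)-x_2|\}$ and $m_i(t)$ analogously, the $M$-uniformity of $\Omega_i$ gives $B_{\rho_i(t)}(\gamma_i(t))\subset\Omega_i$ with $\rho_i(t):=d(\gamma_i(t),\partial\Omega_i)\ge\tfrac1M m_i(t)$; since $\gamma_i(t)\to\gamma(t)$, any ball $B_{\rho'}(\gamma(t))$ with $\rho'<\liminf_i\rho_i(t)$ is eventually contained in $\Omega_i$, hence $|B_{\rho'}(\gamma(t))\setminus\widetilde D|=0$ and so $B_{\rho'}(\gamma(t))\subset{\rm int}(D)=\Omega$ by \eqref{spt}. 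Letting $\rho'\uparrow\liminf_i\rho_i(t)\ge\tfrac1M m(t)$ gives $d(\gamma(t),\partial\Omega)\ge\tfrac1M m(t)$ and $\gamma(t)\in\Omega$ when $m(t)>0$; when $m(t)=0$, $\gamma(t)\in\{x_1,x_2\}\subset\overline\Omega$. Thus $\gamma([0,1])\subset\overline\Omega$ and both conditions of Definition~\ref{uniformdomain} hold, so $\Omega$ is an $M$-uniform domain.

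It then remains to see that $\Omega_i\to\Omega$ in $L^1$, i.e. $|D\setminus\Omega|=0$. For $\epsilon\in(0,1]$ and $i$ large, Lemma~\ref{neijin}(ii) gives $(\Omega_i)_{\epsilon/2}\subset D$, and since $(\Omega_i)_\epsilon\subset{\rm int}\big((\Omega_i)_{\epsilon/2}\big)$ we get $(\Omega_i)_\epsilon\subset{\rm int}(D)=\Omega$; on the other hand Lemma~\ref{volume1}, applied to each $\Omega_i$ (whose diameter is $\ge c$), gives $|\Omega_i\setminus(\Omega_i)_\epsilon|\le|(\partial\Omega_i)^\epsilon|\le C\epsilon^\delta$ with $C=C(c,M,R,n)$ and $\delta=\delta(M,n)\in(0,1]$ both independent of $i$. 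Therefore $|D\setminus\Omega|\le|D\setminus(\Omega_i)_\epsilon|\le|D\Delta\Omega_i|+C\epsilon^\delta$; letting $i\to\infty$ and then $\epsilon\to0$ yields $|D\setminus\Omega|=0$. Since $\Omega\subset D$, this gives $\Omega_i\to\Omega$ in $L^1$ with $\Omega$ an $M$-uniform domain. (Alternatively, once $|{\rm int}(D)|=|D|$ is established, the $L^1$-convergence is immediate from the second part of Remark~\ref{new2}.)

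The hard part is the second step: passing the two defining properties of an $M$-uniform domain — especially the non-thinness bound (ii) — to the limit. The $\mathcal{D}_c$ density estimates by themselves do not prevent the $L^1$-limit from developing low-dimensional ``hairs'' of positive measure, so ruling this out genuinely uses that the constant $M$ is the same for every $\Omega_i$, together with the Hausdorff convergence from Lemma~\ref{neijin} and the density normalization \eqref{spt} of $\widetilde D$. The first and third steps are comparatively routine once Lemmas~\ref{neijin} and \ref{volume1} are in hand.
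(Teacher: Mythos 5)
Your proposal is correct, and it reaches the conclusion by a genuinely different route at the key step. The overall skeleton matches the paper: normalize $D$ via Proposition~\ref{equiv}, use Remark~\ref{bukong} and Lemma~\ref{neijin} to see ${\rm int}(D)\neq\emptyset$, set $\Omega={\rm int}(D)$, and then prove $M$-uniformity by passing the curves of the $\Omega_i$ to the limit. But where the paper builds, for each $N$, a concatenated curve $\gamma^N\subset\Omega$ (segment from $x$ to $x_i$, then $\gamma_i$, then segment from $y_i$ to $y$) satisfying the two conditions only with degraded constants $\frac1M-\frac1N$ and an error $\frac1{MN}$, and then removes the error by letting $N\to\infty$ using Go\l\c{a}b-type lower semicontinuity of $\mathscr{H}^1$ under Hausdorff convergence together with arcwise connectedness (\cite[Theorem 3.18, Lemma 3.12]{Fa}), you instead extract a limit curve directly from the $\gamma_i$ by constant-speed reparametrization and Arzel\`a--Ascoli, and verify condition (ii) pointwise with the sharp constant $\frac1M$ by showing that balls $B_{\rho'}(\gamma(t))$ with $\rho'<\liminf_i d(\gamma_i(t),\partial\Omega_i)$ are eventually inside $\Omega_i$, hence (by $L^1$-convergence and the normalization \eqref{spt}) inside ${\rm int}(D)=\Omega$. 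Your route avoids the geometric measure theory input from Falconer and the $\epsilon$-$N$ bookkeeping of the paper, at the price of a mild parametric argument; your closing step is also different and arguably more robust: you get $|D\setminus\Omega|=0$ quantitatively from $(\Omega_i)_\epsilon\subset\Omega$ plus the uniform bound $|\Omega_i\setminus(\Omega_i)_\epsilon|\le|(\partial\Omega_i)^\epsilon|\le C\epsilon^\delta$ of Lemma~\ref{volume1}, whereas the paper deduces it from $\Omega\subset D\subset\overline\Omega$ together with Remark~\ref{closure} and Proposition~\ref{shuyu}.

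One small point to patch: when you say the constant-speed reparametrization of $\gamma_i$ has Lipschitz constant $\mathscr{H}^1(\gamma_i)$, you are implicitly identifying arclength with the $\mathscr{H}^1$-measure of the image, which can fail if $\gamma_i$ is not injective (the image measure may be strictly smaller than the length, so condition (i) of Definition~\ref{uniformdomain} does not by itself bound the speed). This is harmless: first replace each $\gamma_i$ by a simple arc inside its image joining $x_1^i$ to $x_2^i$ (exactly the arcwise-connectedness fact \cite[Lemma 3.12]{Fa} the paper invokes), for which length and $\mathscr{H}^1$ coincide and for which condition (ii) persists because it is a pointwise condition on points of the image; then your Arzel\`a--Ascoli argument goes through verbatim and yields $\mathscr{H}^1(\gamma)\le\mathrm{length}(\gamma)\le M|x_1-x_2|$ in the limit.
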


Lemma \ref{volume1} is essentially proved in \cite{Packingdimension}, where a more general result for porous domains is established. Here we present a simpler proof in the following for reader's convenience. 
The ideas are from \cite{Packingdimension}.

\begin{proof}[Proof of Lemma \ref{volume1}]
Choose $k_0 \ge 1$ such that 
\begin{eqnarray}
\label{k0}
2^{-k_0-1}\le \frac{\min\{c_0, 1\}}{2} \le 2^{-k_0}.
\end{eqnarray}
If $\frac{\min\{c_0, 1\}}{2} \le r \le 1$, then 
\begin{eqnarray}
|(\partial \Omega)^r| \le |B_{R+1}| \le \frac{2|B_{R+1}|}{\min\{c_0, 1\}}r 
\le \frac{2|B_{R+1}|}{\min\{c_0,1\}}r^{\delta}, \, \forall \delta \in (0,1].
\end{eqnarray}
If $0<r\le \frac{\min\{c_0,1\}}{2} $, then we can find some $k \ge k_0$ such that $2^{-k-1} \le r \le 2^{-k}$.

It suffices to prove \eqref{starw} for $r=2^{-k}$, since it would then imply 
\begin{align*}
    |(\partial \Omega)^r| \le C2^{-k \delta}=C(2^{-k-1})^{\frac{k\delta}{k+1}} \le Cr^{\frac{k\delta}{k+1}}\le Cr^{\delta/2}.
\end{align*}
For any $x \in (\partial \Omega)^{2^{-k}}$, there exists $x_1 \in \partial \Omega$ such that $|x-x_1| <2^{-k}$. Then for any $k_0 \le j \le k$, by the choice of $k_0$ in \eqref{k0}, ${\rm{diam}}(\Omega) > 2^{-j+1}$ so that
there exists $x_2 \in \partial B_{2^{-j+1}}(x_1) \cap \overline{\Omega}$. Let $\gamma\subset\overline\Omega$ be the path connecting $x_1$ and $x_2$ as in Definition \ref{uniformdomain}. Let $y \in \partial B_{2^{-j}}(x_1) \cap \gamma$, and thus 
\begin{align}
    \label{j1w}
    d(y,\partial \Omega) \ge \frac{1}{M}\min\{|y-x_1|, |y-x_2|\}=\frac{2^{-j}}{M}.
\end{align}
We cover $B_R \setminus \partial \Omega$ by $\left\{B_{r_z}(z): z \in B_R \setminus \partial \Omega, r_z=\frac{d(z,\partial\Omega)}{15}\right\}:=\mathcal{B}_1$. By Vitalli's covering Lemma, we can choose a countable pairwise disjoint subfamily $\mathcal{B}$ of $\mathcal{B}_1$ such that 
\begin{align*}
    B_R \setminus \partial \Omega \subset \cup_{B \in \mathcal{B}} 5B.
\end{align*}Hence $y \in B_{5r_z} (z)$ for some $B_{r_z}(z) \in \mathcal{B}$. 

Clearly, 
\begin{align*}
    d(z, \partial \Omega) \le |z-x_1| \le |z-y|+|y-x_1| \le 5r_z+2^{-j} =\frac{1}{3}d(z,\partial \Omega)+2^{-j},
\end{align*}
which implies
\begin{align*}
    d(z,\partial \Omega) \le \frac{3}{2}2^{-j}, \ \
    5r_z \le 2^{-j-1}.
\end{align*}
Therefore, 
\begin{align}
    \label{j3}
    z \in B_{2^{-j+1}}(x_1) \setminus B_{2^{-j-1}}(x_1). 
\end{align}

Notice that by \eqref{j1w}, it follows from $y\in B_{5r_z}(z)$ that
\begin{align*}
    \frac{2^{-j}}{M} \le 20r_z,
\end{align*}
and hence 
\begin{eqnarray*}
    |x-z| \le |x-x_1|+|x_1-y|+|y-z| &< &2^{-k}+2^{-j}+5r_z \le 2^{-j+1}+5r_z\\
    & \le & (40M+5)r_z \le 45Mr_z.
\end{eqnarray*}
Therefore, $x \in B_{45Mr_z}(z)$.

So far, we have shown that for any $x \in (\partial \Omega)^{2^{-k}}$ and $k_0 \le j \le k$, there is 
$z_j \in B_{2^{-j+1}}(x_1) \setminus B_{2^{-j-1}}(x_1)$ such that $x \in B_{45Mr_{z_j}}(z_j)$ and $B_{r_{z_j}}(z_j) \in \mathcal{B}$. Therefore, $\forall x \in (\partial \Omega)^{2^{-k}}$, we have 
\begin{align}
    \label{keytool}
    \sum_{B \in \mathcal{B}}\chi_{45MB}(x) \ge \frac{k-k_0}{3},
\end{align}
since by \eqref{j3} each $B \in \mathcal{B}$ can be considered at most 3 times in order that $x \in 45MB$.

By Hardy-Littlewood Theorem,  there is constant $c_n \ge 1$ such that for any $p>1$, 
\begin{align}
    \label{hl}
    \Vert \mathcal{M}\phi \Vert_{L^p} \le c_n \left(\frac{p}{p-1}\right)^{1/p}\Vert \phi \Vert_{L^p},
\end{align}where $\mathcal{M}\phi$ is the non-centered Hardy-Littlewood maximal function.

Let $\delta=\frac{1}{9(45M)^nc_n}$. By \eqref{keytool} we have
\begin{align*}
    |(\partial \Omega)^{2^{-k}}| =& 2^{-k\delta}\int_{(\partial \Omega)^{2^{-k}}} 2^{k\delta} \\
    \le & 2^{-k\delta} \int_{(\partial \Omega)^{2^{-k}}}2^{(k_0+3 \sum_{B \in \mathcal{B}}\chi_{45MB}(x))\delta}dx\\
    \le & 2^{-k\delta}2^{k_0 \delta}\int_{B_R} \sum_{m=0}^{\infty} \frac{(3 \delta \sum_{B \in \mathcal{B}}\chi_{45MB}(x))^m}{m!}dx
\end{align*}

For any nonnegative $\phi \in L^{\frac{m}{m-1}}$, $m>1$, we have 
\begin{align*}
    \int \phi(x) \sum_{B \in \mathcal{B}}\chi_{45MB}(x) dx \le & (45M)^n \sum_{B \in \mathcal{B}} |B| \frac{1}{|45MB|}\int_{45MB} \phi(x) dx\\
    \le & (45M)^n \sum_{B \in \mathcal{B}}\int_B \mathcal{M}\phi(x) dx\\
    \le & (45M)^n \Vert M\phi \Vert_{L^{\frac{m}{m-1}}}\left(\int (\sum_{B\in \mathcal{B}}\chi_B(x))^m dx\right)^{1/m}\\
    \le & (45M)^nc_n m |B_{2R}|^{\frac{1}m} \Vert \phi \Vert_{L^{\frac{m}{m-1}}}.
\end{align*}
Hence by duality, for $m>1$ we obtain
\begin{align}
    \label{j4}
    \Big\|\sum_{B \in \mathcal{B}}\chi_{45MB}\Big\|_{L^m} \le (45M)^n c_n m |B_{2R}|^{\frac{1}m}.
\end{align}
It is straightforward to verify \eqref{j4} for $m=1$. Therefore,
\begin{align*}
    |(\partial \Omega)^{2^{-k}}| \le & 2^{-k\delta}2^{k_0\delta}|B_{2R}|\sum_{l=0}^{\infty}\frac{\left(3(45M)^n\delta c_n l\right)^l}{l!}\\
    \le & 2^{-k\delta}2^{k_0\delta}|B_{2R}|\sum_{l=0}^{\infty} \left(\frac{e}{3}\right)^l ,\quad \mbox{by Stirling's formula and the choice of $\delta$}\\
    = & C(k_0, R, \delta, n) 2^{-k\delta}\\
    \le & C(c_0,M,R,n) 2^{-k\delta}, \, \mbox{since $k_0$ depends on $c_0$.}
\end{align*}
This completes the proof.
\end{proof}


Lemma \ref{volume1} yields the following Corollary.
\begin{corollary}
\label{fractionalestimate}
Let $\Omega$ be {an} $M$-uniform domain in $B_R\subset \mathbb{R}^n$ with ${\rm{diam}}(\Omega) \ge c_0>0$. Then there exists a constant $\delta=\delta(M,n) \in (0,1]$ such that for any $s \in (0,\delta)$, 
\begin{align}
\label{sperimeterestimate}
    \big[\chi_{\Omega}\big]_{W^{s,1}(B_R)} \le C=C(M,n,R,s,c_0).
\end{align}
\end{corollary}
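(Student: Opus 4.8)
The plan is to estimate the Gagliardo seminorm $[\chi_\Omega]_{W^{s,1}(B_R)}=\int_{B_R}\int_{B_R}\frac{|\chi_\Omega(x)-\chi_\Omega(y)|}{|x-y|^{n+s}}\,dx\,dy$ by slicing in the distance $|x-y|=r$ and observing that the integrand is nonzero only when $x$ and $y$ lie on opposite sides of $\partial\Omega$, which forces at least one of them to be within distance $r$ of $\partial\Omega$. More precisely, I would first rewrite the double integral in polar coordinates in the variable $h=y-x$: up to the solid-angle factor,
\begin{align*}
[\chi_\Omega]_{W^{s,1}(B_R)}\le C_n\int_0^{2R}\frac{1}{r^{1+s}}\Big(\int_{B_R}|\chi_\Omega(x+h)-\chi_\Omega(x)|\,dx\Big)\Big|_{|h|=r\text{ averaged}}\,dr,
\end{align*}
and then bound the inner translation-difference integral. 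The key pointwise observation is that if $\chi_\Omega(x+h)\ne\chi_\Omega(x)$ with $|h|=r$, then the segment from $x$ to $x+h$ meets $\partial\Omega$, so both $x$ and $x+h$ belong to $(\partial\Omega)^r$; hence $\int_{B_R}|\chi_\Omega(x+h)-\chi_\Omega(x)|\,dx\le |(\partial\Omega)^r\cap B_R|\le |(\partial\Omega)^r|$.

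Next I would invoke Lemma \ref{volume1}, which gives $\delta=\delta(M,n)\in(0,1]$ and $C=C(c_0,M,R,n)$ with $|(\partial\Omega)^r|\le Cr^\delta$ for all $r\in(0,1]$; for $r\in[1,2R]$ one has the trivial bound $|(\partial\Omega)^r|\le|B_{R+1}|\le C(R,n)$. Plugging these in and splitting the $r$-integral at $r=1$:
\begin{align*}
[\chi_\Omega]_{W^{s,1}(B_R)}\le C_n\Big(\int_0^1 \frac{Cr^\delta}{r^{1+s}}\,dr+\int_1^{2R}\frac{C(R,n)}{r^{1+s}}\,dr\Big).
\end{align*}
The first integral is $C\int_0^1 r^{\delta-s-1}\,dr$, which converges precisely because $s<\delta$, yielding $\frac{C}{\delta-s}$; the second integral is plainly finite for $s>0$. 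Collecting constants gives \eqref{sperimeterestimate} with $C=C(M,n,R,s,c_0)$, noting that the dependence on $c_0$ enters through the constant in Lemma \ref{volume1}.

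The only slightly delicate point is making the polar-coordinate reduction rigorous, i.e.\ justifying that $\int_{B_R}\int_{B_R}\frac{|\chi_\Omega(x)-\chi_\Omega(y)|}{|x-y|^{n+s}}\,dx\,dy$ is controlled by $\int_0^{2R}r^{-1-s}\,\sup_{|h|=r}\int_{B_R}|\chi_\Omega(x+h)-\chi_\Omega(x)|\,dx\,dr$ up to a dimensional constant — one substitutes $h=y-x$, extends $\chi_\Omega$ by $0$ outside $B_R$ (harmless since we only need an upper bound and the difference $|\chi_\Omega(x)-\chi_\Omega(y)|$ for $x,y\in B_R$ is unchanged), writes $dh=r^{n-1}\,dr\,d\omega$, and uses the segment-crossing observation uniformly in the direction $\omega$. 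This is routine but is the one place where care is needed; everything else is a direct application of Lemma \ref{volume1}.
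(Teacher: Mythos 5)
Your proposal is correct and is essentially the paper's own argument: both reduce the Gagliardo seminorm to $\int_0^{2R} r^{-1-s}\,|(\partial\Omega)^r|\,dr$ (the paper by slicing the $y$-integral over spheres $\partial B_r(x)$ and noting the integrand vanishes unless $x\in(\partial\Omega)^r$, you by the change of variables $h=y-x$ and the segment-crossing observation, which is the same support fact in different clothing), and then both invoke Lemma \ref{volume1} and the convergence of $\int_0^1 r^{\delta-s-1}\,dr$ for $s<\delta$. The only cosmetic care needed (strict versus non-strict distance $r$, which you can absorb by passing to $(\partial\Omega)^{2r}$) does not affect the estimate.
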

\begin{proof}
Let $\delta$ be as in Lemma \ref{volume1}. Then \eqref{sperimeterestimate} follows from the following estimate
\begin{align*}
    \int_{B_R}\int_{B_R} \frac{|\chi_{\Omega}(x)-\chi_{\Omega}(y)|}{|x-y|^{n+s}} dy dx=&\int_{B_R}\int_0^{2R}\int_{\partial B_r(x)} \frac{|\chi_{\Omega}(x)-\chi_{\Omega}(y)|}{r^{n+s}} d{\mathscr{H}}^{n-1}(y)dr dx \\
    = &\int_0^{2R}\int_{(\partial \Omega)^r}\int_{\partial B_r(x)} \frac{|\chi_{\Omega}(x)-\chi_{\Omega}(y)|}{r^{n+s}} d{\mathscr{H}}^{n-1}(y)dxdr\\
    \le  &\int_0^{2R}\int_{(\partial \Omega)^r}\int_{\partial B_r(x)} \frac{1}{r^{n+s}} d{\mathscr{H}}^{n-1}(y) dxdr\\
    \le & \int_0^{2R} Cr^{\delta} r^{-s-1} dr
    \le C(M,n,R,s,c_0)<\infty,
\end{align*}where in the second equality we have used that if $x \notin (\partial \Omega)^r$ and $y \in B_r(x)$, 
then $\chi_{\Omega}(x)=\chi_{\Omega}(y).$
\end{proof}

Next, we prove Lemma \ref{maincompact}.
\begin{proof}[Proof of Lemma \ref{maincompact}]
Without loss of generality, we may assume $spt \mu_D=\partial D$ as in Proposition \ref{equiv}. 
We first prove that ${\rm{int}}(D) \ne \emptyset$. Indeed, notice that by Remark \ref{bukong}, each $\Omega_i$ contains a fixed ball of radius $r_0$ depending only on $c_0,n$ and $M$. Therefore, for each $\Omega_i$, if $\epsilon<\frac{r_0}2$, then by definition $(\Omega_i)_{\epsilon}$ contains a ball of radius $\frac{r_0}2$. By Lemma \ref{neijin} (ii), $D$ also contains a ball of radius $\frac{r_0}2$. In particular, ${\rm{int}}(D) \ne \emptyset$.

Now let $\Omega={\rm{int}}(D)$. It suffices to show $\Omega$ is {an} $M$-uniform domain, since the $L^1$ convergence in the statement can then be directly deduced from Remark \ref{closure}, Proposition \ref{shuyu} and the fact $\Omega \subset D \subset \overline{\Omega}$.

Fix any $x,y \in \Omega$, then for any given $N>2M$, we may choose $0<\epsilon<\frac{1}{N}$ so small that $k\epsilon < d(x, \partial \Omega) \le (k+1)\epsilon$ for some $k>(1+\frac{1}{M})(N+1)$, and $|x-y|>2(N+1)\epsilon$. Since ${\rm{int}}(\Omega) \ne \emptyset$,
it follows from Lemma \ref{neijin} (i) and (iii) that $d_H(\Omega_i, \Omega) \rightarrow 0$.
Hence we can find $x_i, y_i \in \textcolor{red}{\Omega_i}$, with $|x_i-x|<\epsilon, |y_i-y|<\epsilon$ for $i$ large. By Lemma \ref{neijin} (ii), we may choose $i$ so large that 
\begin{eqnarray}
\label{cru}
(\Omega_i)_{\epsilon} \subset \Omega.
\end{eqnarray} Also we choose $\gamma_i \subset \Omega_i$ to be the rectifiable curve connecting $x_i$ and $y_i$ in $\Omega_i$ as in the definition of $M$-uniform domain. For any $p \in \gamma_i$, if $p \in B_{N \epsilon}(x_i) \cup B_{N\epsilon}(y_i)$, then clearly $p \in B_{(N+1) \epsilon}(x) \cup B_{(N+1)\epsilon}(y) \subset \Omega$. Moreover, this implies
\begin{equation}
\label{a1}
d(p, \partial \Omega) \ge k\epsilon-(N+1) \epsilon>\frac{1}{M}(N+1)\epsilon\ge \frac{1}{M}\min\big\{|p-x|,\ |p-y|\big\}
\end{equation}
Clearly, \eqref{a1} also holds for any $p$ on the line segment between $x_i$ and $x$, and between $y_i$ and $y$.
If $p \notin B_{N \epsilon}(x_i) \cup B_{N\epsilon}(y_i)$, then 
$$d(p, \partial \Omega_i) \ge \frac{1}{M} \min\big\{|p-x_i|,|p-y_i|\big\}> \frac{N\epsilon}{M},$$
thus $p \in (\Omega_i)_{\frac{N\epsilon}M} \subset (\Omega_i)_{\epsilon}\subset \Omega \cap \Omega_i$. Moreover, let $r=d(p,\partial((\Omega_i)_{\epsilon}))$, then by \eqref{cru}, $B_r(p) \subset \Omega$, so $d(p,\partial \Omega) \ge r=d\left(p, \partial ((\Omega_i)_{\epsilon})\right) \ge d(p,\partial \Omega_i)-\epsilon$. 
Therefore,
\begin{equation}
\label{a3}
\frac{d(p,\partial \Omega)}{\min\{|p-x_i|,|p-y_i|\}} \ge \frac{d(p,\partial \Omega_i)-\epsilon}{\min\{|p-x_i|,|p-y_i|\}}\ge \frac{1}{M}-\frac{\epsilon}{N\epsilon}\ge \frac{1}{M}-\frac{1}{N}.\\
\end{equation}
Hence by the choice of $\epsilon$ and $N$, it follows
\begin{equation}
\label{a2}
d(p,\partial \Omega) \ge (\frac{1}{M}-\frac{1}{N})(\min\{|p-x|,|p-y|\}-\epsilon) \ge(\frac{1}{M}-\frac{1}{N})(\min\{|p-x|,|p-y|\})-\frac{1}{MN}.
\end{equation}

Therefore, we may let $\gamma^N$ be the curve that consists of the following three parts. The first part {is a line segment starting from $x$ to $x_i$}, the second part {is the curve $\gamma_i$ found above, which starts from $x_i$ to $y_i$,} and the third part {is a line segment starting from $y_i$ to $y$.}

It is clear from the discussion above that 
$\gamma^N \subset \Omega$ and $\gamma^N$ {starts from $x$ to $y$.}
Moreover, from \eqref{a1} and \eqref{a2} and the choice of $\epsilon$, we obtain that \\
\begin{eqnarray*}
{\mathscr{H}}^1(\gamma^N) &\le& M|x_i-y_i|+|x_i-x|+|y_i-y|\\
&\le& M|x-y|+(M+1)|x_i-x|+(M+1)|y_i-y|\\
&\le& M|x-y|+2\frac{M+1}{N},
\end{eqnarray*}
and
$$
d(p, \partial \Omega) \ge (\frac{1}{M}-\frac{1}{N}) \min\big\{|p-x|,\ |p-y|\big\}-\frac{1}{MN},
\quad \forall p\in \gamma^N.
$$
Then by the compactness of $(\overline{\Omega}, d_H)$, and since $\gamma^N$ is connected, 
there is a compact connected set $E \subset \overline{\Omega}$ such that 
$d_H(\gamma^N, E) \rightarrow 0$ as $N \rightarrow \infty$. 
Then by \cite[Theorem 3.18]{Fa}, 
$${\mathscr{H}}^1(E) \le \liminf_{N \rightarrow \infty} {\mathscr{H}}^1(\gamma^N) \le M|x-y|.$$
Hence by \cite[Lemma 3.12]{Fa}, $E$ is arc-wise connected so that we can choose a rectifiable
curve $\gamma \subset E$ joining $x$ and $y$. 
For any $p \in \gamma$, we can choose sequence $p_N\in \gamma^N, p_N \rightarrow p$. Since 
$$d(p_N, \partial \Omega) \ge (\frac{1}{M}-\frac{1}{N}) \min\{|p_N-x|,|p_N-y|\}-{\frac{1}{MN}},$$
it follows by passing to the limit $N \rightarrow \infty$ that
$$d(p, \partial \Omega) \ge \frac{1}{M} \min\{|p-x|,|p-y|\},$$ 
which also implies $\gamma \subset {\rm{int}}(\Omega)$. 
Therefore $\gamma$ satisfies both properties in the definition of $M$-uniform domain, 
and $\Omega$ is $M$-uniform. By Corollary \ref{new2} and Proposition \ref{shuyu}, $\Omega$ is a domain. 
This finishes the proof.
\end{proof}

Now we are ready to prove Theorem \ref{stri}.
\begin{proof}[Proof of Theorem \ref{stri}]
By Corollary \ref{fractionalestimate}, the sequence $\chi_{\Omega_i}$ are uniformly bounded in $W^{s,1}\textcolor{red}{(B_R)}$.
By the compact embedding from $W^{s,1}(B_R)$ to $L^q(B_R)$ space with $1 \le q \le 1^*:=\frac{n}{n-s}$, we
conclude that there exists a subsequence of $\Omega_i$ that converges to a set $D \subset B_R$ in $L^1$. By Lemma \ref{maincompact}, $D$ is $L^1$ equivalent to an $M$-uniform domain. This finishes the proof.
\end{proof}

\section{Uniform Poincar\'e inequality and existence of minimizer to \eqref{formulation}}

In this section, we will apply Theorem \ref{stri} to deduce two uniform Poincar\'e inequalities {via compactness argument, and then we will prove Theorem \ref{existence1}.}

\begin{theorem}
\label{unic} For any domain $\Omega\in\mathcal{M}_R$, 
there exists a constant $C>0$ depending on $M, R$ such that 
\begin{eqnarray}
\label{poincare1}
\int_{\Omega} u^2 \,dx \le C\int_{\Omega} |\nabla u|^2 \,dx, \ \ \, \forall u \in H^1(\Omega) \ {\rm{with}}\ \int_{\Omega} u \,dx =0.
\end{eqnarray}
\end{theorem}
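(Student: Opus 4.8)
The plan is to argue by contradiction via a compactness argument, using Theorem \ref{stri} to extract a limiting domain. Suppose the inequality fails uniformly: then there exist $M$-uniform domains $\Omega_i \in \mathcal{M}_R$ and functions $u_i \in H^1(\Omega_i)$ with $\int_{\Omega_i} u_i\,dx = 0$, $\int_{\Omega_i} u_i^2\,dx = 1$, and $\int_{\Omega_i} |\nabla u_i|^2\,dx \to 0$. The first obstacle is that the domains vary, so before doing anything with the $u_i$ we must pin down the geometry. Since $\int_{\Omega_i} u_i^2 = 1$ forces $|\Omega_i| > 0$, but \emph{a priori} $|\Omega_i|$ could degenerate to $0$; however, by the Poincar\'e inequality on a fixed small ball contained in $\Omega_i$ (which exists by Remark \ref{bukong} once we know $\inf_i \diam(\Omega_i) > 0$) one sees the mass-$1$ normalization cannot be sustained on a vanishingly small set together with vanishing gradient, so in fact $\inf_i |\Omega_i| > 0$; alternatively one restricts attention to the regime $|\Omega_i| \ge V_0$ which is the only case needed for the applications. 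Then $\inf_i \diam(\Omega_i) > 0$ by the isodiametric inequality, so Theorem \ref{stri} applies and, after passing to a subsequence, $\Omega_i \to \Omega$ in $L^1$ for some $M$-uniform domain $\Omega \in \mathcal{M}_R$; moreover by Lemma \ref{neijin} the convergence is also in Hausdorff distance and $\Omega_i, \Omega \in \mathcal{D}_c$ for a common $c$.

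The second main step is to transfer the $u_i$ to a fixed domain and pass to the limit. Because each $\Omega_i$ is $M$-uniform hence an $H^1$-extension domain with extension constant depending only on $M, R$ (this is the Jones extension theorem, which the paper invokes in the introduction), extend $u_i$ to $\widetilde{u}_i \in H^1(\mathbb{R}^n)$ with $\|\widetilde{u}_i\|_{H^1(\mathbb{R}^n)} \le C(M,R)\|u_i\|_{H^1(\Omega_i)} \le C$. Thus $\widetilde{u}_i$ is bounded in $H^1(B_{R+1})$ and, up to a further subsequence, $\widetilde{u}_i \weakto u$ weakly in $H^1(B_{R+1})$ and strongly in $L^2(B_{R+1})$ by Rellich. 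Since $\nabla u_i \to 0$ in $L^2(\Omega_i)$ and $\Omega_i \to \Omega$ in $L^1$ with Hausdorff convergence, one checks $\int_\Omega |\nabla u|^2 = 0$ using lower semicontinuity of $v \mapsto \int_\Omega |\nabla v|^2$ under weak $H^1$ convergence together with the fact that $\chi_{\Omega_i} \to \chi_\Omega$ in $L^1$ (so $\int_{\Omega_i}|\nabla \widetilde u_i|^2$ and $\int_\Omega |\nabla \widetilde u_i|^2$ differ by $o(1)$ once one has an $L^\infty$ bound — or more cleanly, test the weak convergence of $\nabla \widetilde u_i$ against $\chi_\Omega \varphi$). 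Hence $u$ is constant on the (connected) open set $\Omega$.

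The third step identifies the constant and derives the contradiction. From $\widetilde u_i \to u$ in $L^2(B_{R+1})$ and $\chi_{\Omega_i} \to \chi_\Omega$ in $L^1$ one gets $\int_\Omega u\,dx = \lim_i \int_{\Omega_i} u_i\,dx = 0$, forcing the constant value of $u$ on $\Omega$ to be $0$, i.e. $u \equiv 0$ on $\Omega$. On the other hand $\int_\Omega u^2\,dx = \lim_i \int_{\Omega_i} u_i^2\,dx = 1$, contradicting $u \equiv 0$. This contradiction proves the theorem; the constant $C$ produced depends only on $M$ and $R$, as claimed (tracking through: it is controlled by the Jones extension constant for $\mathcal{M}_R$ and the density constant $c$ from $\mathcal{D}_c$, both functions of $M, R$ alone). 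The step I expect to require the most care is the passage $\int_\Omega |\nabla u|^2 = 0$: one must be careful that the $L^1$-convergence of domains is compatible with the weak convergence of gradients — the safest route is to fix a small ball $B \Subset \Omega$, note $B \subset \Omega_i$ for $i$ large by Hausdorff convergence, deduce $\int_B |\nabla u|^2 \le \liminf_i \int_B |\nabla u_i|^2 \le \liminf_i \int_{\Omega_i} |\nabla u_i|^2 = 0$, and then let $B \uparrow \Omega$; this avoids any boundary subtleties and uses only interior weak lower semicontinuity.
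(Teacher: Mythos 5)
Your overall strategy (contradiction plus extension plus Theorem \ref{stri} compactness) is the same as the paper's, but there is a genuine gap in how you rule out degeneration of the domains. To apply Theorem \ref{stri} you need $\inf_i {\rm diam}(\Omega_i)>0$, and the theorem as stated gives a constant $C=C(M,R)$ for \emph{every} $\Omega\in\mathcal{M}_R$, with no volume or diameter restriction. Your first justification is circular: Remark \ref{bukong} produces an interior ball only after a volume (hence diameter) lower bound is known, which is exactly what you are trying to establish; moreover a Poincar\'e inequality on a small ball $B\subset\Omega_i$ controls neither $\int_{\Omega_i}u_i^2$ nor the mean-zero condition, both of which live on all of $\Omega_i$. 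Your fallback (``restrict to $|\Omega_i|\ge V_0$'') proves a weaker statement, with $C$ depending on $V_0$. The paper closes this gap by a scaling step: if ${\rm diam}(\Omega)<1$, dilate $\Omega$ to a diameter-one $M$-uniform domain $\Omega_t=\frac1t\Omega\in\mathcal{M}_R$ and note that the inequality $\int u^2\le C\int|\nabla u|^2$ transforms with a factor $t^2<1$, so the diameter-one case (proved by your compactness argument) implies the small-diameter case with the same constant. Equivalently, one may rescale the contradiction sequence itself so that all domains have diameter comparable to one; without some such reduction your argument does not cover all of $\mathcal{M}_R$.

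A secondary problem is your ``safest route'' for the step $\int_\Omega|\nabla u|^2=0$: the claim that $B\subset\subset\Omega$ implies $B\subset\Omega_i$ for large $i$ does not follow from $L^1$ or Hausdorff convergence. $M$-uniform domains need not satisfy an exterior density condition (they are in $\mathcal{D}_c$ but not in $\mathcal{D}^c$; Lemma \ref{neijin} gives $D\subset D_i^{\epsilon}$, $(D_i)_\epsilon\subset D$, $D_i\subset D^\epsilon$, but \emph{not} $D_\epsilon\subset D_i$, which is Lemma \ref{waijin} and requires $\mathcal{D}^c$). For instance, $\Omega_i=B_1\setminus\overline{B}_{1/i}(0)$ are uniformly $M$-uniform, converge to $B_1$ in $L^1$ and in Hausdorff distance, yet never contain a fixed ball around the origin; and on $B\setminus\Omega_i$ the gradient of the extension is not controlled by $\int_{\Omega_i}|\nabla u_i|^2$. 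The correct argument is the one the paper uses (and which you gesture at): show $\chi_{\Omega_i}\nabla\widetilde u_i\rightharpoonup\chi_\Omega\nabla u$ weakly in $L^2$ (test against $\chi_\Omega\varphi$ with $\varphi$ bounded, using $|\Omega_i\Delta\Omega|\to0$ and the uniform $L^2$ bound on $\nabla\widetilde u_i$), and then apply weak lower semicontinuity of the $L^2$ norm to get $\int_\Omega|\nabla u|^2\le\liminf_i\int_{\Omega_i}|\nabla u_i|^2=0$; note that weighting with $\chi_{\Omega_i}$ (not $\chi_\Omega$) is what lets you compare with $\int_{\Omega_i}|\nabla u_i|^2$ rather than with $\int_\Omega|\nabla\widetilde u_i|^2$. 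The remaining limit identifications ($\int_\Omega u^2=1$, $\int_\Omega u=0$) are fine as you wrote them.
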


\begin{proof}
We divide the proof of \eqref{poincare1} for $\Omega \in \mathcal{M}_R$ into two cases:\\
(i) If ${\rm{diam}}(\Omega) \ge 1$, {then} we argue by contradiction. Suppose there exist pairs $(\Omega_i, u_i)$ such that $\Omega_i \in \mathcal{M}_R$, ${\rm{diam}}(\Omega_i) \ge 1$, $u_i \in H^1(\Omega_i)$ satisfies
$$\int_{\Omega_i} u_i\,dx=0,\ \ \int_{\Omega_i} u_i^2\,dx =1,$$ 
but
$$\int_{\Omega_i} |\nabla u_i|^2 dx \rightarrow 0\ {\rm{as}}\ i \rightarrow \infty.$$
Let $\widetilde{u}_i$ be an extension of $u_i$ such that 
$$\|\widetilde{u}_i\|_{H^1(B_R)} \le C(M,n) \Vert u_i\Vert_{H^1(\Omega_i)}.$$
Hence $\{\tilde{u}_i\}$ is a bounded sequence in $H^1(B_R)$. {Hence} we may assume that there exists $u\in H^1(B_R)$ such that 
$\widetilde{u}_i\rightharpoonup u$ in $H^1(B_R)$ and $\widetilde{u}_i \rightarrow u$ in $L^2(B_R)$. 
By Theorem \ref{stri}, there is {an} $M$-uniform domain $\Omega \in \mathcal{M}_R$ such that 
$\Omega_i \rightarrow \Omega$ in $L^1$. 

{Since $\chi_{\Omega_i}\nabla \tilde{u}_i \rightharpoonup \chi_{\Omega}\nabla \tilde{u}$ weakly in $L^2$}, by the lower semicontinuity {property of weak convergence}, we have 
\begin{align*}
    \int_{\Omega} |\nabla u|^2 \,dx \le \liminf_{i \rightarrow \infty} \int_{\Omega_i}|\nabla u_i|^2 \,dx=0.
\end{align*}
Hence $u \equiv c$ in $\Omega$. On the other hand,
\begin{align*}
    |\int_{\Omega_i} u_i^2 \,dx -\int_{\Omega} u^2 \,dx| \le & |\int_{\Omega_i} u_i^2 \,dx-\int_{\Omega_i} u^2 \,dx|+|\int_{\Omega_i} u^2 \,dx-\int_{\Omega} u^2 \,dx|\\
    \le & \Vert\widetilde{u}_i+u\Vert_{L^2(B_R)}\Vert \widetilde{u}_i-u\Vert_{L^2(B_R)}+\int_{\Omega_i \Delta \Omega} u^2 dx\\
    \rightarrow & 0,  \ \ \ \mbox{as $i \rightarrow \infty$.}
\end{align*}
Hence 
\begin{align}
    \label{kkk0}
    \int_{\Omega} u^2 dx=1.
\end{align}
Similarly, we have $\int_{\Omega} u \,dx= \lim_{i \rightarrow \infty} \int_{\Omega_i} u_i \,dx=0$. 
Hence $c=0$ and $\int_{\Omega} u^2 \,dx=0$. This contradicts \eqref{kkk0}. 
Therefore, we have proved \eqref{poincare1}. \\
(ii) {If} ${\rm{diam}}(\Omega) < 1$, {then} we may assume {that} $0 \in \Omega$. Hence we can choose a $0<t<1$ such that 
$\Omega_t:=\frac{1}{t}\Omega \in \mathcal{M}_R$ with ${\rm{diam}}(\Omega_t) =1$. 
For any $u \in H^1(\Omega)$ with $\int_\Omega u\,dx=0$, from (i) we then have
\begin{align*}
    \int_{\Omega} u^2(x) \,dx = &t^n\int_{\Omega_t} u^2(tx) \,dx
    \le C t^n \int_{\Omega_t} |\nabla(u(tx))|^2 \,dx \\
    =&C t^{n+2} \int_{\Omega_t} {|\nabla u(tx)|^2} \,dx=Ct^2\int_{\Omega}|\nabla u|^2\,dx \\
    \le & C\int_{\Omega} |\nabla u(x)|^2 \,dx,
\end{align*}
since $0<t<1$. This finishes the proof.
\end{proof}

The second uniform Poincar\'e inequality has a slightly different form, 
which will be useful to prove the existence of the minimization problem \eqref{formulation}.
\begin{theorem}
\label{dabian} For any $\Omega\in\mathcal{M}_{R,c}$ {with $P(\Omega) \le \Lambda$}, there exists a constant $C>0$ 
depending on $M$, $c$, {$\Lambda$} and $R$ such that 
\begin{eqnarray}
\label{poincares}
\int_{\Omega} u^2 \,dx \le C \Big(\int_{\Omega} |\nabla u|^2 \,dx+\big(\int_{\partial^* \Omega} |u^*(x)| \,d{\mathscr{H}}^{n-1} \big)^2 \Big), \ \ \ \forall u \in H^1(\Omega).
\end{eqnarray}
\end{theorem}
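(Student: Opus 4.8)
The plan is to argue by contradiction using the compactness machinery established so far, exactly in the spirit of the proof of Theorem \ref{unic}(i), but keeping track of the boundary term. Suppose \eqref{poincares} fails. Then there exist $\Omega_i \in \mathcal{M}_{R,c}$ with $P(\Omega_i) \le \Lambda$ and $u_i \in H^1(\Omega_i)$ such that
\[
\int_{\Omega_i} u_i^2 \,dx = 1, \qquad \int_{\Omega_i} |\nabla u_i|^2 \,dx + \Big(\int_{\partial^* \Omega_i} |u_i^*| \,d{\mathscr{H}}^{n-1}\Big)^2 \longrightarrow 0.
\]
Since each $\Omega_i$ is an $M$-uniform domain in $B_R$, it is a $W^{1,2}$-extension domain with a uniform extension constant (this is the standard Jones extension theorem for $(\varepsilon,\delta)$-domains, applicable here since by Remark \ref{bukong} the domains contain a fixed ball and the extension constant depends only on $M,n,R$), so there is $\widetilde{u}_i \in H^1(B_R)$ with $\widetilde{u}_i = u_i$ on $\Omega_i$ and $\|\widetilde{u}_i\|_{H^1(B_R)} \le C(M,n,R)\|u_i\|_{H^1(\Omega_i)} \le C$. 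Thus, up to a subsequence, $\widetilde{u}_i \rightharpoonup u$ in $H^1(B_R)$ and $\widetilde{u}_i \to u$ in $L^2(B_R)$. By Theorem \ref{stri} (whose hypothesis \eqref{assu} holds because ${\rm{diam}}(\Omega_i) \ge c$), there is an $M$-uniform domain $\Omega \in \mathcal{M}_{R}$ with $\Omega_i \to \Omega$ in $L^1$; moreover ${\rm{diam}}(\Omega) \ge c$ by Lemma \ref{neijin}, so $\Omega \in \mathcal{M}_{R,c}$, and by lower semicontinuity of perimeter under $L^1$-convergence, $P(\Omega) \le \Lambda$.

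Next I would identify the weak limit. Since $\chi_{\Omega_i} \nabla \widetilde{u}_i \rightharpoonup \chi_{\Omega} \nabla \widetilde{u}$ weakly in $L^2(B_R)$ (the $L^1$-convergence of $\chi_{\Omega_i}$ together with the weak $L^2$-convergence of the gradients gives this, as in the proof of Theorem \ref{unic}), lower semicontinuity yields $\int_{\Omega} |\nabla u|^2 \,dx \le \liminf_i \int_{\Omega_i}|\nabla u_i|^2\,dx = 0$, so $u \equiv a$ is constant on $\Omega$. The same estimate as in Theorem \ref{unic}(i), splitting $\big|\int_{\Omega_i} u_i^2 - \int_{\Omega} u^2\big|$ into a term controlled by $\|\widetilde{u}_i - u\|_{L^2}$ and a term controlled by $|\Omega_i \Delta \Omega|$, shows $\int_{\Omega} u^2\,dx = \lim_i \int_{\Omega_i} u_i^2\,dx = 1$, hence $a \neq 0$. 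It remains to derive a contradiction from the vanishing of the boundary term, i.e. to show that $\int_{\partial^* \Omega} |u^*|\,d{\mathscr{H}}^{n-1} = 0$; since $u^* \equiv a \neq 0$ on $\partial^* \Omega$ (the trace of a constant is that constant, using the trace characterization \eqref{cy5'}), this forces ${\mathscr{H}}^{n-1}(\partial^* \Omega) = P(\Omega) = 0$, so $\Omega = \mathbb{R}^n$ or $\Omega = \emptyset$ up to measure zero, contradicting $|\Omega| = |\Omega \cap B_R| \in (0, |B_R|)$.

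The main obstacle is the lower semicontinuity of the boundary term along the sequence: one must show
\[
\int_{\partial^* \Omega} |u^*|\,d{\mathscr{H}}^{n-1} \le \liminf_{i\to\infty} \int_{\partial^* \Omega_i} |u_i^*|\,d{\mathscr{H}}^{n-1} = 0.
\]
My plan for this is to pass through BV theory: extend each $u_i$ to $\widetilde{u}_i \in BV(U)$ on a fixed neighborhood $U \supset \overline{B_R}$ as remarked after \eqref{cy5'} (this uses $M$-uniformity), and note that $u_i \chi_{\Omega_i}$, respectively $u \chi_{\Omega}$, lie in $BV(U)$ with the jump part of $D(u_i\chi_{\Omega_i})$ on $\partial^* \Omega_i$ having total mass at least $c\int_{\partial^*\Omega_i}|u_i^*|\,d{\mathscr{H}}^{n-1}$ up to the interior contribution; combined with the uniform $H^1$-bound on $\widetilde u_i$, the uniform perimeter bound $P(\Omega_i)\le\Lambda$, and the Hausdorff convergence $\Omega_i \to \Omega$ from Lemma \ref{neijin}, one gets $u_i\chi_{\Omega_i} \to u\chi_{\Omega}$ in $L^1(U)$ with uniformly bounded $BV$-norms, hence $u_i\chi_{\Omega_i}\rightharpoonup u\chi_\Omega$ weakly-$*$ in $BV$ and $|D(u\chi_\Omega)|(U) \le \liminf_i |D(u_i\chi_{\Omega_i})|(U)$ by lower semicontinuity of total variation. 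Isolating the jump parts and using that $u$ has no interior jump (being $H^1$ on $\Omega$) and that $\nabla u = 0$ on $\Omega$, the absolutely continuous and Cantor parts carry no mass, so the boundary integral $\int_{\partial^*\Omega}|u^*|\,d{\mathscr{H}}^{n-1}$ is bounded by $\liminf_i \int_{\partial^*\Omega_i}|u_i^*|\,d{\mathscr{H}}^{n-1}=0$. This is the delicate point because the reduced boundaries $\partial^*\Omega_i$ need not converge nicely as sets — only the perimeter measures $\mu_{\Omega_i}$ converge weakly-$*$ (after extracting a further subsequence) to some measure $\ge \mu_\Omega$ — so the argument must be carried out at the level of the vector measures $D(u_i\chi_{\Omega_i})$ rather than via pointwise trace convergence. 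The remaining case ${\rm{diam}}(\Omega) < 1$ is handled, as in Theorem \ref{unic}(ii), by rescaling: replacing $\Omega$ by $\Omega/t$ with $t\in(0,1)$ chosen so that ${\rm{diam}}(\Omega/t) = 1$ (noting rescaling by $1/t>1$ keeps the domain $M$-uniform and only increases the admissible perimeter bound by a controlled factor $t^{1-n}$, while the inequality for the rescaled domain with a possibly larger constant still yields \eqref{poincares} for $\Omega$ since $0<t<1$).
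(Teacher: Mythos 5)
Your proposal is correct and follows the same compactness-by-contradiction scheme as the paper: uniform extension for $M$-uniform domains, Theorem \ref{stri} (plus lower semicontinuity of perimeter) to produce an $M$-uniform $L^1$-limit $\Omega$ with $P(\Omega)\le\Lambda$, weak lower semicontinuity of the Dirichlet energy to force $u\equiv a$ on $\Omega$ with $\int_\Omega u^2\,dx=1$, and a BV-type lower semicontinuity of the boundary term to conclude. The one point where you genuinely diverge is that last step. The paper sets $\bar u_i=\widetilde u_i\chi_{\Omega_i}\in {\rm SBV}$ via \cite[Theorem 3.84]{afp} and invokes the SBV lower semicontinuity theorems of Braides \cite[Theorems 2.3 and 2.12]{B}, using $\mathscr{H}^{n-1}(\partial^*\Omega_i)\le\Lambda$ and the weak $L^2$ convergence of $\chi_{\Omega_i}\nabla\widetilde u_i$, to get $\int_{\partial^*\Omega}|u^*|\,d\mathscr{H}^{n-1}\le\liminf_i\int_{\partial^*\Omega_i}|u_i^*|\,d\mathscr{H}^{n-1}$ directly. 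You instead use plain lower semicontinuity of the total variation of $D(\widetilde u_i\chi_{\Omega_i})$; this simpler route does close in the contradiction setting, but be careful how you phrase it: ``isolating the jump parts'' on both sides is exactly what requires Ambrosio/Braides-type SBV lower semicontinuity, not plain BV lower semicontinuity. What saves you is that the absolutely continuous part of the approximating measures also vanishes, since $\int_{\Omega_i}|\nabla u_i|\,dx\le |B_R|^{1/2}\big(\int_{\Omega_i}|\nabla u_i|^2\,dx\big)^{1/2}\to0$; hence $|D(\widetilde u_i\chi_{\Omega_i})|\to 0$ in total mass on a neighborhood of $\overline{B_R}$, total-variation lower semicontinuity (with the $L^1$ convergence $\widetilde u_i\chi_{\Omega_i}\to u\chi_\Omega$ you verified) gives $|a|\,P(\Omega)=|D(u\chi_\Omega)|=0$, and since $|\Omega|>0$ (forced by $\int_\Omega u^2=1$) and $|\Omega|<\infty$ imply $P(\Omega)>0$, you get $a=0$, the desired contradiction. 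State it that way rather than as a jump-part inequality; you still need the trace decomposition $D(\widetilde u_i\chi_{\Omega_i})=\nabla u_i\,\mathcal{L}^n\res\Omega_i+u_i^*\nu_{\Omega_i}\,\mathscr{H}^{n-1}\res\partial^*\Omega_i$, which is where \cite[Theorem 3.84]{afp} and the finite perimeter of $\Omega_i$ enter. Finally, your closing paragraph about a ``remaining case ${\rm{diam}}(\Omega)<1$'' is superfluous: the class $\mathcal{M}_{R,c}$ already carries the diameter lower bound $c$ and the constant is allowed to depend on $c$, so your main argument covers everything; the rescaling you sketch would in any case need care (the dilated domain may leave $B_R$ and the perimeter bound changes), and the paper's phrase ``by scaling'' refers only to normalizing $\int_{\Omega_i}u_i^2\,dx=1$.
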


\begin{proof}
Suppose \eqref{poincares} were false. Then by scaling, we may assume that there would exist pairs $(\Omega_i, u_i)$ such that $\Omega_i \in \mathcal{M}_{R,c}$, {$P(\Omega_i) \le \Lambda$,} ${\rm{diam}}(\Omega_i) \ge c$, $u_i \in H^1(\Omega_i)$ such that
$$\int_{\Omega_i} u_i^2 =1,$$
but 
$$\int_{\Omega_i} |\nabla u_i|^2\,dx + \big(\int_{\partial^* \Omega} |u^*| \,d{\mathscr{H}}^{n-1}\big)^2 \rightarrow 0
\ \ {\rm{as}}\  \ i \rightarrow \infty.$$
We may assume for convenience that $u_i \ge 0$. Let $\widetilde{u}_i$ be an extension of $u_i$ such that 
$$\|\widetilde{u}_i\|_{H^1(B_{{R}})} \le C(M,n) \|u_i\|_{H^1(\Omega_i)}.$$
Hence $\{\widetilde{u}_i\}$ is a bounded sequence in $H^1(B_R)$.
Let $u\in H^1(B_R)$ be the weak limit of $\widetilde{u}_i$ in $H^1(B_R)$
and $\widetilde{u}_i \rightarrow {u}$ in $L^2(B_{R})$. 
By Theorem \ref{stri} {and lower semicontinuity of sets of finite perimeter}, there is {an} $M$-uniform domain $\Omega \in \mathcal{M}_{R,c}$ {with $P(\Omega)\le \Lambda$} such that $\Omega_i \rightarrow \Omega$ in $L^1$. 

{As in the proof of Theorem \ref{unic}, we have that
\begin{align*}
    \int_{\Omega} |\nabla u|^2 dx \le \liminf_{i \rightarrow \infty} \int_{\Omega_i} |\nabla u_i|^2 dx=0,
\end{align*}and thus $u \equiv c$ in $\Omega$ for some constant $c$. Also,
\begin{align}
    \label{kkk}
    \int_{\Omega} u^2 dx=1.
\end{align}}
Now let $\Bar{u}_i=\widetilde{u}_i \chi_{\Omega_i}$ and $\bar{u}={u}\chi_{\Omega}$. By \cite[Theorem 3.84]{afp} and the structure of $BV$ function, we know that $\Bar{u}_i, u \in SBV(\mathbb{R}^n)$, with
\begin{align*}
    J_{\Bar{u}_i}=  \partial ^* \Omega_i \cap \{u_i^*>0\}
\end{align*}
and 
\begin{align*}
    J_u=\partial ^* \Omega \cap \{u^* >0\}.
\end{align*}
Here $J_u$ denotes the measure theoretical jump part of a BV function $u$. 

We let $w^-$ and $w^+$ denote the measure theoretical interior and exterior trace of a BV function $w$ on $\partial^* \Omega$ respectively. Since $\mathscr{H}^{n-1}(\partial^* \Omega_i)\le \Lambda$, 
{and that $\nabla \widetilde{u}_i\chi_{\Omega_i} \rightharpoonup \nabla \widetilde{u}\chi_{\Omega}$ weakly in $L^2(B_R)$}, we can apply \cite[Theorem 2.3 and Theorem 2.12]{B} to obtain
\begin{align*}
\int_{\partial^* \Omega} u^* \,d{\mathscr{H}}^{n-1}= &
    \int_{J_u} |u^--u^+| \,d{\mathscr{H}}^{n-1}\\
    \le& \liminf_{i \rightarrow \infty} \int_{J_{\Bar{u}_i}} |\Bar{u}_i^--\Bar{u}_i^+| \,d{\mathscr{H}}^{n-1}\\
    =&  \liminf_{i \rightarrow \infty}\int_{\partial^* \Omega_i} u_i^*.
\end{align*}
Hence $\int_{\partial^* \Omega} u^*\,d{\mathscr{H}}^{n-1}=0$
and  $u \equiv 0$ in $\Omega$. This contradicts \eqref{kkk}.
\end{proof}

Now we are ready to give a proof of Theorem \ref{existence1}:
\begin{proof}[Proof of Theorem \ref{existence1}]
Let $(u_i,\Omega_i)$ be a minimizing sequence, and we may assume that 
$u_i$ is a minimizer of $\mathcal{J}_m(\cdot, \Omega_i)$ among all $H^1(\Omega_i)$ functions.
From ${\mathcal{J}_m}(u_i,\Omega_i) \le {\mathcal{J}_m}(0,\Omega_i)=0$, we deduce that
\begin{align*}
& \int_{\Omega_i} |\nabla u_i|^2\,dx+\frac{1}{2m} \big(\int_{\partial \Omega_i}u_i\,d{\mathscr{H}}^{n-1}\big)^2\le  \int_{\Omega_i}f u_i \,dx
\le \epsilon \int_{\Omega_i} u_i^2\,dx+ C_{\epsilon} \int_{\Omega} f^2\,dx\\
& \le C\epsilon  \Big(\int_{\Omega} |\nabla u_i|^2 \,dx+\big(\int_{\partial^* \Omega} |u_i^*| \,d{\mathscr{H}}^{n-1}\big)^2 \Big)+C_{\epsilon} \int_{\Omega} f^2\,dx, 
\end{align*}
where we have used Theorem {\ref{dabian}}. By choosing a small $\epsilon>0$, this implies that 
\begin{align}
\label{xuexi}
    \sup_i \Big(\int_{\Omega_i} |\nabla u_i|^2\,dx+\int_{\partial \Omega_i}u_i\,d{\mathscr{H}}^{n-1}\Big) <\infty.
\end{align}
Hence the infimum of ${\mathcal{J}_m}>-\infty$. Moreover, by Theorem \ref{dabian} and \eqref{xuexi},
$$\sup_i||u_i||_{H^1(\Omega_i)} < \infty.$$
Now we can {repeat} the same argument as in the proof of Theorem \ref{dabian} to 
conclude that there exists a $(u,\Omega) \in \mathcal{A}$ such that 
$$\mathcal{{J}}_m(u,\Omega) \le \liminf_{i \rightarrow \infty} \mathcal{{J}}_m(u_i,\Omega_i).$$ 
The proof is completed.\end{proof}


\section{Existence of minimizers in SBV}
In this section we will extend the existence results in the previous section to the setting of SBV, and prove
Theorem \ref{existence2}. The argument of our proof is similar to that by \cite{BG10}.

\begin{proof} [Proof of Theorem \ref{existence2}]
 We prove it by the direct method of calculus of variation. \\
\noindent{\it Claim} 1: $\mathcal{J}$ is bounded from below on $\mathcal{S}$. \\
 For any $u\in \mathcal{S}$, since $\supp u\subset D$ and $\mathscr{H}^{n-1}(J_u\cap\partial D)=0$, we have the following Sobolev type inequality (\cite[Theorem 4.10]{MZ}):
 \begin{equation}
   \left\|u\right\|_{L^{\frac{n}{n-1}}(D)}\leq C\left|Du\right|(D). 
   \label{eqn:BVSobolev}
 \end{equation}
 From \eqref{eqn:BVSobolev},  Young's inequality and the fact that $t^2>t-1$, we can derive
 \begin{equation}
 \begin{split}
   \mathcal{J}(u)&\geq \frac{1}{4}\int_D|\nabla u|^2\,dx+\frac{1}{4m}\Big( \int_{J_u}(|u^+|+|u^-|)\, d\mathscr{H}^{n-1} \Big)^2\\&+\frac{1}{4}\int_{D}(|\nabla u|-1)\, dx+\frac{1}{4m}\Big(\int_{J_u}(|u^+|+|u^-|)\, d \mathscr{H}^{n-1}-1\Big)-\int_{D} fu\, dx\\   
   &\geq\frac{1}{4}\int_D|\nabla u|^2\,dx+\frac{1}{4m}\Big( \int_{J_u}(|u^+|+|u^-|)\, d\mathscr{H}^{n-1} \Big)^2\\
   &+ C\Big( \int_{D}|\nabla u|\,dx+\int_{J_u}(|u^+-u^-|)\, d\H^{n-1} \Big)-C-\int_{D}fu\, dx\\
   &=\frac{1}{4}\int_D|\nabla u|^2\,dx+\frac{1}{4m}\Big( \int_{J_u}(|u^+|+|u^-|)\, d\mathscr{H}^{n-1} \Big)^2\\
   &+C|Du|(D)-C-\int_{D}f u\, dx\\
   &\geq\frac{1}{4}\int_D|\nabla u|^2\,dx+\frac{1}{4m}\Big( \int_{J_u}(|u^+|+|u^-|)\, d\mathscr{H}^{n-1} \Big)^2
   + C\left|Du\right|(D)\\
   &-C-\epsilon \big\|u\big\|_{L^{\frac{n}{n-1}}(D)}-C(\epsilon)\big\|f\big\|_{L^n(D)}\\
    &\geq -C-C\left\|f\right\|_{L^n(D)},
 \end{split}
 \label{eqn:SBVlowerbound}
 \end{equation}
 provided $\epsilon$ is chosen sufficiently small. 
 Hence the functional $\mathcal{J}$ is bounded from below,
 and we can find a minimizing sequence $\{u_i\}$ in $\mathcal{S}$ such that 
 \begin{equation}
 	\lim_{i\rightarrow\infty} \mathcal{J}(u_i)=\inf_{u\in\mathcal{S}} \mathcal{J}(u)>-\infty.
 \end{equation}

\noindent{\it Claim 2}. There exists $u\in {\rm{SBV}}(D)$ such that after taking a subsequence,
$u_i\rightharpoonup u$ in $BV$.
From the penultimate inequality of \eqref{eqn:SBVlowerbound} we have

\begin{equation}
  \begin{split}
    &\sup_i\left\|u_i\right\|_{BV(D)}=\sup_i\big( |Du_i|(D)+\big\|u_i\big\|_{L^1(D)} \big)\\
    &\leq C\sup_i \big(\mathcal{J}(u_i)+C+C\left\|f\right\|_{L^n(D)}\big)<\infty,
  \end{split}
  \label{eqn:uniformBVbound}
\end{equation}
and
\begin{equation}
\begin{split}
  &\sup_i\Big( \int_{D}|\nabla u_i|^2\,dx+\int_{J_{u_i}}(|u_i^+|+|u_i^-|)\, d\mathscr{H}^{n-1} \Big)\\
  &\leq C\sup_{i}\left( \mathcal{J}(u_i)+C+C\left\|f\right\|_{L^n(D)} \right)<\infty.
  \end{split}
  \label{eqn:uniformL2bound}
\end{equation}

By the compactness theorem of $BV$ functions (\cite[Theorem 3.23]{afp}),
there exists a subsequence $\left\{ u_{i_k} \right\}$ and $u\in BV(D)$ such that
$u_{i_k}\rightharpoonup u$ in $BV(D)$, i.e., 
 \begin{equation}
 \begin{cases}
 	u_{i_k}\rightarrow u\ \ \text{ in }\ \  L^1(D),\\
 	\label{eqn:L1conv}
Du_{i_k}\overset{\ast}{\rightharpoonup} Du\ \ \text{ in }\ \ \mathcal{M}(D).
\end{cases}
\end{equation}
 For every $\epsilon>0$, let $u_{i_k}^\epsilon:=\max\{u_{i_k}, \epsilon\}, u^\epsilon:=\max\left\{ u, \epsilon \right\}$. Then we have
 \begin{equation}
 	u_{i_k}^\epsilon{\rightharpoonup} u^\epsilon \text{ in }BV(D).
 	\label{eqn:SBV1}
 \end{equation}
  From \eqref{eqn:uniformL2bound} we have
 \begin{equation}
   \sup_{k}\int_{D}|\nabla u_{i_k}^\epsilon|^2=\sup_{k}\int_{D}\left|\nabla u_{i_k}\chi_{ \left\{ u_{i_k}>\epsilon \right\}}\right|^2\leq \sup_k\int_{D}|\nabla u_{i_k}|^2<\infty. 
   \label{eqn:SBV2}
 \end{equation}
Moreover, from the Chebyshev inequality we have
\begin{equation}
  \sup_k \mathscr{H}^{n-1}(J_{u_{i_k}^\epsilon})
  \leq \sup_k \frac{1}{\epsilon}\int_{J_{u_{i_k}}}\big(|u_{i_k}^+|+|u_{i_k}^-|\big)\, d\mathscr{H}^{n-1}\leq \frac{C}{\epsilon}, 
  \label{eqn:SBV3}
\end{equation}
%
%
where we use that fact that $J_{u_{i_k}^\epsilon}\subset J_{u_{i_k}}\cap \left\{ u_{i_k}>\epsilon \right\}.$ 

Now from \eqref{eqn:SBV1}, \eqref{eqn:SBV2} and \eqref{eqn:SBV3}, 
we can apply \textcolor{red}{the} SBV compactness theorem (\cite[Theorem 4.7]{afp})
to conclude that $u^\epsilon\in SBV(D)$, and 
\begin{equation}
\begin{cases}
  \nabla u_{i_k}^\epsilon\rightharpoonup \nabla u^\epsilon\ \ \text{ in }\ \ L^1(D),\\
  \label{eqn:SBVconv1}
D^ju_{i_k}^\epsilon \overset{\star}\rightharpoonup D^j u^\epsilon\ \ \text{ in }\ \ \mathcal{M}(D),
\end{cases}
\end{equation}
{where $D^j$ denotes the jump part of the distributional gradient $Du$. }
Moreover, 
\begin{equation}
  \int_{D}|\nabla u^\epsilon|^2\leq \liminf_{k\rightarrow \infty}\int_{D}|\nabla u_{i_k}^\epsilon|^2\leq\liminf_{k\rightarrow \infty}\int_D |\nabla u_{i_k}|^2.
  \label{eqn:epsilonlsc}
\end{equation}
Since $\nabla u^\epsilon=\nabla u\chi_{ \{u>\epsilon\}}\rightarrow \nabla u$ a.e. in $D$
as $\epsilon\to 0$, by Fatou's lemma we have that
\begin{equation}
  \int_{D}|\nabla u|^2\leq \liminf_{\epsilon\rightarrow0} \int_{D}|\nabla u^\epsilon|^2\leq \sup_{k}\int_{D}|\nabla u_{i_k}|^2<\infty,
  \label{}
\end{equation}
{and} this implies $\nabla u\in L^2(D)$. 
From the dominated convergence theorem we have that 
\begin{equation}
  \nabla u^\epsilon\rightarrow \nabla u \text{ in }L^2(D)\ \ {\rm{as}}\ \ \epsilon\to 0.
  \label{eqn:gradientL2conv}
\end{equation}
For the jump part of $u$, since $u\in BV(\R^n)$, we get
\begin{equation}
\int_{J_u}|u^+-u^-|\, d\mathscr{H}^{n-1}<\infty. 
\label{eqn:jumpbound}
\end{equation}
Notice that
\begin{equation}
  D^j u^\epsilon=\left( (u^\epsilon)^+-(u^\epsilon)^- \right)\nu_u\mathscr{H}^{n-1}\lfloor_{J_u}.
  \label{eqn:jumpue}
\end{equation}
By \eqref{eqn:jumpbound}, \eqref{eqn:jumpue} and the dominated convergence theorem, we have
\begin{equation}
  D^j u^\varepsilon\rightarrow D^j u \text{ in } \mathcal{M}(D)\ \ {\rm{as}}\ \ \epsilon\to 0.
  \label{eqn:jumpconv}
\end{equation}
Since both convergence of \eqref{eqn:gradientL2conv} and \eqref{eqn:jumpconv} are strong, the Cantor part $D^c u$ of $Du$ vanishes.  \textcolor{red}{In} fact, for any open set $A$, 
\begin{equation}
  \begin{split}
    |Du|(A)&\leq \liminf_{\epsilon\rightarrow 0}|Du^\epsilon|(A)\\
    &=\liminf_{\epsilon\rightarrow0}\Big( \int_{A}|\nabla u^\epsilon|\,dx +|D^ju^\epsilon|(A)\Big)\\
    &=\int_{A}|\nabla u|\,dx+|D^ju|(A)\textcolor{red}{,}
  \end{split}
  \label{}
\end{equation}
which implies $|D^c u|(A)=0$. Hence $D^c u\equiv 0$ and $u\in SBV(\R^n)$. 
From \eqref{eqn:L1conv} we can derive that $|\supp u\setminus D|=0$, and $|\{u>0\}|=V_0$.

\medskip
\noindent{\it Claim 3}: The lower semicontinuity property holds for functional $\mathcal{J}$.
From \eqref{eqn:epsilonlsc} and \eqref{eqn:gradientL2conv}, we can conclude that 
\begin{equation}
  \int_{D}|\nabla u|^2\leq \lim_{k\rightarrow\infty}\int_{D}|\nabla u_{i_k}|^2.
  \label{eqn:gradientlsc}
\end{equation}
For any open set $A\subset \R^n$, in view of the bound estimate \eqref{eqn:SBV2}, we can apply the lower semicontinuity result in (\cite[Theorem 2.12]{B}) to $\left\{ u_{i_k}^\epsilon \right\}$ to obtain
\begin{equation}
  \int_{J_{u^\epsilon}\cap A}\left(|(u^\epsilon)^+|+|(u^\epsilon)^-|\right)\, d\mathscr{H}^{n-1}
  \leq \liminf_{k\rightarrow\infty}\int_{J_{u_{i_k}^\epsilon}\cap A}\big(|(u_{i_k}^\epsilon)^+|+|(u_{i_k}^\epsilon)^-|\big)\, d\mathscr{H}^{n-1}.
  \label{eqn:lowersemi2}
\end{equation}
Passing the $\epsilon$ to $0$ and applying the monotone convergence theorem to the left hand side of \eqref{eqn:lowersemi2} gives
\begin{equation}
  \int_{J_u\cap A}\left( |u^+|+|u^-| \right)\, d\mathscr{H}^{n-1}
  \leq \liminf_{k\to\infty}\int_{J_{u_{i_k}}\cap A}\big( |u_{i_k}^+|+|u_{i_k}^-| \big)\, d\mathscr{H}^{n-1}. 
  \label{}
\end{equation}
Choose $A=\R^n\setminus D$, we then get $\mathscr{H}^{n-1}(J_u\setminus D)=0$ and hence $u\in \mathcal{S}$. 
From \eqref{eqn:L1conv}, \eqref{eqn:gradientlsc}, and \eqref{eqn:lowersemi2}, we can conclude
that
\begin{equation}
  \mathcal{J}(u)\leq \liminf_k \mathcal{J}(u_{i_k})=\inf_{u\in \mathcal{S}} \mathcal{J}(u)
  \label{}
\end{equation}
which entails $u$ is a minimizer of the problem. 
\end{proof}

\section{Some properties on smooth critical points}
\setcounter{equation}{0}
\setcounter{theorem}{0}


In this section, we will show that smooth solutions are stationary critical points.

\smallskip
For a  bounded  $C^{2}$-domain $\Omega\subset\mathbb R^n$,  since $\textcolor{red}{\mathcal{J}_m}(\cdot,\Omega):H^1(\Omega)\mapsto\mathbb R$ is convex,
it is readily seen  \textcolor{red}{in} \cite{BBN} that there exist\textcolor{red}{s} a unique critical point, denoted
as $u_{\Omega}$, of 
\begin{equation}\label{min2.1}
\mathcal{J}_{m}(v, \Omega):=\frac12\int_\Omega |\nabla v|^2\,dx+\frac{1}{2m}\big(\int_{\partial\Omega} |v|\,d\sigma\big)^2-\int_\Omega v\,dx,
\end{equation}
over $v\in H^1(\Omega)$. In fact, $u_\Omega$ is a minimal point of ${\mathcal{J}_m}(\cdot,\Omega)$ over $v\in H^1(\Omega)$. Since $\mathcal{J}_m(|u_\Omega|,\Omega)\le \mathcal{J}_m(u_\Omega,\Omega)$,
we conclude that $u_\Omega\ge 0$. Moreover,  we have the following proposition on the regularity of $\overline\Omega$. 
\begin{proposition}\label{regularity} If $\Omega\subset\mathbb R^n$ is a $C^2$ bounded domain, and $u\in H^1(\Omega)$ is a minimizer of $\mathcal{J}_m(\cdot,\Omega)$ over
$H^1(\Omega)$, then $u\in W^{1,p}(\Omega)$ for any $1\le p<\infty$ and 
\begin{equation}\label{1p-regularity} \max\big\{\|u\|_{W^{1,p}(\Omega)}, \|(\nabla u)^*\|_{L^p(\partial\Omega)}\big\}\le C(m, p, \|\Omega\|_{C^2}).
\end{equation}
\end{proposition}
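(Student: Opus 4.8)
The plan is to derive the regularity of the minimizer $u=u_\Omega$ from the Euler--Lagrange system it satisfies, treating the nonlocal term as a constant multiple of the boundary trace. Since $u$ minimizes the convex functional $\mathcal{J}_m(\cdot,\Omega)$ over $H^1(\Omega)$, the first variation in direction $\varphi\in H^1(\Omega)$ gives
\begin{equation}\label{eqn:EL-prop}
\int_\Omega \nabla u\cdot\nabla\varphi\,dx + \frac{1}{m}\Big(\int_{\partial\Omega}u\,d\sigma\Big)\int_{\partial\Omega}\varphi\,d\sigma = \int_\Omega \varphi\,dx,
\end{equation}
where we have used $u\ge 0$ so $|u|=u$ and $\frac{d}{dt}|u+t\varphi|=\varphi$ on $\{u>0\}$ in the relevant sense (a standard argument, using that $u\ge 0$ a.e. and that the functional is minimized, shows \eqref{eqn:EL-prop} holds for all test functions). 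Writing $\beta:=\frac{1}{m}\int_{\partial\Omega}u\,d\sigma\ge 0$, equation \eqref{eqn:EL-prop} says that $u$ is the weak solution of the Robin-type problem
\begin{equation}\label{eqn:robin-prop}
-\Delta u = 1 \ \text{ in }\Omega, \qquad \frac{\partial u}{\partial\nu} + \beta\, u^* = 0 \ \text{ on }\partial\Omega,
\end{equation}
except that the ``boundary datum'' on the right of the Robin condition is the constant $0$ while the zeroth-order boundary coefficient is a nonnegative constant $\beta$ — actually the inhomogeneity sits in the interior. The point is that \eqref{eqn:robin-prop} is a linear elliptic boundary value problem with constant coefficients on a $C^2$ domain, with $L^\infty$ (indeed constant) interior data, so elliptic regularity applies.

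Then I would run the bootstrap. First, $u\in H^1(\Omega)$ together with $f\equiv 1\in L^\infty$ and the $C^2$ regularity of $\partial\Omega$ gives, by standard $L^p$ theory for the Robin problem (or equivalently by flattening the boundary and using interior plus boundary Calderón--Zygmund estimates — see, e.g., Gilbarg--Trudinger or the treatment of Robin conditions in Nardi / Daners), that $u\in W^{2,p}(\Omega)$ for all $p<\infty$, hence $u\in W^{1,p}(\Omega)$ for all $p<\infty$ with the quantitative bound
\[
\|u\|_{W^{2,p}(\Omega)}\le C\big(p,\|\Omega\|_{C^2}\big)\Big(\|1\|_{L^p(\Omega)} + \|u\|_{L^p(\Omega)} + \beta\|u^*\|_{L^p(\partial\Omega)}\Big).
\]
To close this estimate one controls $\|u\|_{L^p(\Omega)}$ and $\beta$ a priori: since $u\ge0$, testing \eqref{eqn:EL-prop} with $\varphi\equiv 1$ gives $\beta\cdot \mathscr{H}^{n-1}(\partial\Omega) \le |\Omega|$, so $\beta\le C(\|\Omega\|_{C^2})$; and testing with $\varphi=u$ together with the uniform Poincaré-type inequality (Theorem~\ref{dabian}, or directly the trace inequality on the fixed $C^2$ domain $\Omega$) bounds $\|u\|_{H^1(\Omega)}$, hence $\|u\|_{L^p}$ for $p$ up to the Sobolev exponent, and then the $W^{2,p}$ estimate itself upgrades this to all $p$ by iteration. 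Finally, $u\in W^{2,p}(\Omega)$ for large $p$ embeds into $C^{1,\alpha}(\overline\Omega)$, and the trace of $\nabla u$ on $\partial\Omega$ then lies in $C^{0,\alpha}(\partial\Omega)\subset L^p(\partial\Omega)$ with the stated bound $\|(\nabla u)^*\|_{L^p(\partial\Omega)}\le C(m,p,\|\Omega\|_{C^2})$; alternatively one uses the trace theorem $W^{2,p}(\Omega)\hookrightarrow W^{1-1/p,p}(\partial\Omega)$ directly on $\nabla u$.

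The main obstacle I anticipate is the boundary regularity theory for the Robin condition $\partial_\nu u + \beta u^* = 0$ with the low initial regularity $u\in H^1$: one must be careful that the boundary term $\beta u^*$ on the right-hand side of the flattened elliptic estimate is only in a fractional Sobolev space a priori, so the bootstrap must be organized to gain $1/2$ a derivative at the boundary at each step (from $u\in W^{k,p}$ one gets $u^*\in W^{k-1/p,p}(\partial\Omega)$, feeding $W^{k+1,p}$ interior-to-boundary regularity), rather than jumping straight to $W^{2,p}$. This is routine but must be stated with the correct trace/Besov spaces; on a $C^2$ domain one only has $C^2$ boundary so the regularity stops at $W^{2,p}$ (no more), which is exactly what the proposition claims. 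Once the scheme is set up the constants depend only on $m$ (through $\beta$ and through the $1/m$ factor), $p$, and $\|\Omega\|_{C^2}$, as asserted.
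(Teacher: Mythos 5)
There is a genuine gap at the very first step. You assert that the minimizer satisfies the Euler--Lagrange identity $\int_\Omega\nabla u\cdot\nabla\varphi\,dx+\frac1m\big(\int_{\partial\Omega}u\,d\sigma\big)\int_{\partial\Omega}\varphi\,d\sigma=\int_\Omega\varphi\,dx$ for \emph{all} $\varphi\in H^1(\Omega)$, and then bootstrap from the resulting linear boundary value problem. But the boundary term of $\mathcal{J}_m$ involves $|u|$, which is not differentiable at $0$, and the minimizer can vanish on a portion of $\partial\Omega$ of positive $\mathscr{H}^{n-1}$-measure; on $\{u^*=0\}$ the one-sided derivative produces $\int|\varphi|$, so the correct first-order condition is only a variational inequality: equality of the Neumann-type condition on $\partial\Omega\cap\{u>0\}$ and the inequality $\frac{\partial u}{\partial\nu}\ge-\frac1m\int_{\partial\Omega}u\,d\sigma$ on $\partial\Omega\cap\{u=0\}$, exactly as recorded in \eqref{critical2.1}. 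This is not a technicality: the annulus example in the paper (with $0<m<3\pi-4\pi\ln 2$) gives a minimizer vanishing on the whole outer boundary component, where the strict inequality holds, so your claimed identity for all test functions is false there, and consequently the linear problem you propose to bootstrap from is not satisfied by $u$. (Separately, even granting your identity, the strong form is a Neumann problem with \emph{constant} datum $\partial_\nu u=-\frac1m\int_{\partial\Omega}u\,d\sigma$, not the Robin condition $\partial_\nu u+\beta u^*=0$ you wrote; and your conclusion $u\in W^{2,p}$ for all $p$, hence $C^{1,\alpha}(\overline\Omega)$ for all $\alpha<1$, is stronger than what can be expected once the contact set $\partial\Omega\cap\{u=0\}$ is nontrivial --- this is a Signorini-type boundary condition, which is precisely why the proposition only claims $W^{1,p}$ bounds together with $L^p$ control of the nontangential maximal function $(\nabla u)^*$.)

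The missing idea, which is how the paper proceeds, is to regularize the non-smooth term: replace $|v|$ by $\sqrt{v^2+\epsilon^2}$ in the boundary integral, minimize $\mathcal{J}_m^\epsilon(\cdot,\Omega)$ to get $v_\epsilon\ge0$ solving a genuine Neumann problem $-\Delta v_\epsilon=1$ with datum $g_\epsilon=\big(\frac1m\int_{\partial\Omega}\sqrt{v_\epsilon^2+\epsilon^2}\big)\frac{v_\epsilon}{\sqrt{v_\epsilon^2+\epsilon^2}}$, observe that $\|g_\epsilon\|_{L^\infty(\partial\Omega)}$ is bounded uniformly in $\epsilon$ by the energy comparison $\mathcal{J}_m^\epsilon(v_\epsilon,\Omega)\le\mathcal{J}_m^\epsilon(1,\Omega)$ plus Poincar\'e, apply the elliptic $W^{1,p}$ and nontangential-maximal-function estimates uniformly in $\epsilon$, and finally pass to the limit $\epsilon\to0$, identifying the weak limit with $u$ by lower semicontinuity and the strict convexity (uniqueness of the minimizer) of $\mathcal{J}_m(\cdot,\Omega)$. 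If you want to salvage your bootstrap, you must either run it through such a regularization (or penalization) or develop the boundary regularity theory for the variational inequality \eqref{critical2.1} directly, which is considerably more delicate than the linear Robin theory you invoke.
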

\begin{proof}  For any $\epsilon>0$, consider $\mathcal{J}_m^\epsilon(\cdot, \Omega)$, an $\epsilon$-regularization of $\mathcal{J}_m(\cdot,\Omega)$, which is defined by
$$
\mathcal{J}_{m}^{\epsilon}(v,\Omega)=\frac12\int_\Omega |\nabla v|^2\,dx+\frac{1}{2m}\big(\int_{\partial\Omega} \sqrt{v^2+\epsilon^2}\,d\sigma\big)^2-\int_\Omega v\,dx.
$$
Let $v_\epsilon\in H^1(\Omega)$ be a minimizer of $\mathcal{J}_m^\epsilon(\cdot,\Omega)$, whose existence is standard. Then $v_\epsilon\ge 0$ in $\Omega$,
and direct calculations imply that
$v_\epsilon$ is a weak solution to the following Neumann boundary value problem:
$$\begin{cases}-\Delta v_\epsilon=1 & \ {\rm{in}}\ \Omega,\\
\displaystyle\frac{\partial v_\epsilon}{\partial\nu}=g_\epsilon:=\big(\frac{1}{m}\int_{\partial\Omega}\sqrt{v_\epsilon^2+\epsilon^2}\,d\sigma\big)\frac{v_\epsilon}{\sqrt{v_\epsilon^2+\epsilon^2}} & 
\ {\rm{on}}\ \partial\Omega.
\end{cases}
$$
It is easy see that 
$$\mathcal{J}_m^\epsilon(v_\epsilon,\Omega)\le \mathcal{J}_m^\epsilon(1,\Omega)\le C(m, |\partial\Omega|, |\Omega|), \ \forall 0<\epsilon\le 1.$$
This, combined with the Poincar\'e inequality, implies that
$$\int_\Omega |\nabla v_\epsilon|^2+\big(\int_{\partial\Omega}|v_\epsilon|\big)^2\le C(m, |\partial\Omega|, |\Omega|), \ \forall 0<\epsilon\le 1, $$
and hence
$$\|v_\epsilon\|_{H^1(\Omega)}\le C(m, |\partial\Omega|, |\Omega|), \ \forall 0<\epsilon\le 1.$$
Since $\displaystyle |g_\epsilon|\le \frac{1}{m}\int_{\partial\Omega} \sqrt{1+v_\epsilon^2}$ on $\partial\Omega$, this implies that $g_\epsilon\in L^\infty(\partial\Omega)$, and
$$\|g_\epsilon\|_{L^\infty(\partial\Omega)}\le C(m, |\partial\Omega|, |\Omega|), \ \forall 0<\epsilon\le 1.$$
Therefore we can apply the standard elliptic theory to conclude that  $v_\epsilon\in W^{1,p}(\Omega)$ for any $1\le p<\infty$, and
$$
\|v_\epsilon\|_{W^{1,p}(\Omega)}\le C(m, p, \|\Omega\|_{C^2}), \ \forall 0<\epsilon\le 1.$$
In fact, we have the stronger estimate, namely the $L^p$-norm of the non-tangential maximal function of $\nabla v_\epsilon$ can be 
bounded that of $g_\epsilon$, i.e.
\begin{equation}\label{non-tangential-estimate}
\big\|(\nabla v_\epsilon)^*\big\|_{L^p(\partial\Omega)}\le C(m, p, |\Omega\|_{C^2})\|g_\epsilon\|_{L^p(\partial\Omega)}, \ \forall 1<p<\infty.
\end{equation}
Hence we may assume, after taking a possible subsequence, that there exists $v\in W^{1,p}(\Omega)$, $p\in (1,\infty)$, such that
$$v_\epsilon\rightharpoonup v \ {\rm{in}}\ W^{1,p}(\Omega), \ \forall 1\le p<\infty.$$
Now we want to show that $v$ is also a minimizer of $\mathcal{J}_m(\cdot,\Omega)$. In fact, for any function $w\in H^1(\Omega)$ we have that
$$\mathcal{J}_m^\epsilon(v_\epsilon, \Omega)\le \mathcal{J}_m^\epsilon(w, \Omega).$$
Since $v_\epsilon\rightharpoonup v$ in $H^1(\Omega)$, it follows from the lower semicontinuity that
$$\mathcal{J}_m(v, \Omega)\le\liminf_{\epsilon\rightarrow 0}\mathcal{J}_m^\epsilon(v_\epsilon, \Omega)\le \liminf_{\epsilon\rightarrow 0} \mathcal{J}_m^\epsilon(w, \Omega)
=\mathcal{J}_m(w, \Omega).$$
Since $\mathcal{J}_m(\cdot,\Omega)$ is convex over $H^1(\Omega)$, there is a unique minimizer of $\mathcal{J}_m(\cdot,\Omega)$ in $H^1(\Omega)$.
Hence $u\equiv v$ in $\Omega$. This proves \eqref{1p-regularity}. 
\end{proof}

\bigskip
It follows from Proposition \ref{regularity} and the Sobolev embedding theorem 
that $u\in C^{\alpha}(\overline\Omega)$ for any $0<\alpha<1$. Hence, by direct calculations, we obtain that
$u=u_\Omega\ge 0$ is a weak \textcolor{red}{solution to} the following boundary value problem
\begin{equation}\label{critical2.1}
\begin{cases}
-\Delta u=1 & \ {\rm{in}}\ \Omega,\\
\displaystyle\frac{\partial u}{\partial\nu}=-\frac{1}{m}\int_{\partial\Omega} u\,d\sigma & \ {\rm{on}}\ \partial\Omega\cap\{x: u(x)>0\},\\
\displaystyle\frac{\partial u}{\partial\nu}\ge-\frac{1}{m}\int_{\partial\Omega} u\,d\sigma & \ {\rm{on}}\ \partial\Omega\cap\{x: u(x)=0\}.
\end{cases}
\end{equation}
It is readily seen that $u\not\equiv 0$ on $\partial\Omega$.  
The following lemma indicates that any nonnegative weak solution
of \eqref{critical2.1} also minimizes $\mathcal{J}_m(\cdot,\Omega)$.

\begin{lemma}\label{minimizer} For any bounded $C^2$-domain $\Omega\subset\mathbb R^n$, if $u\in H^1(\Omega)\cap C^{1}(\overline\Omega)$ is a nonnegative
weak solution of \eqref{critical2.1}, then 
\begin{equation}\label{minimizer0}
\mathcal{J}_m(u,\Omega)\le \mathcal{J}_{m}(v, \Omega), \ \forall\  v\in H^1(\Omega).
\end{equation}
\end{lemma}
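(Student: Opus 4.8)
The plan is to verify \eqref{minimizer0} directly by comparing $\mathcal{J}_m(v,\Omega)$ with $\mathcal{J}_m(u,\Omega)$, using the elementary convexity of each of the three terms together with the boundary conditions in \eqref{critical2.1}. First, since $|\nabla|v||=|\nabla v|$ a.e.\ in $\Omega$, $\int_{\partial\Omega}\big||v|\big|\,d\sigma=\int_{\partial\Omega}|v|\,d\sigma$, and $-\int_\Omega|v|\,dx\le-\int_\Omega v\,dx$, we have $\mathcal{J}_m(|v|,\Omega)\le\mathcal{J}_m(v,\Omega)$; hence it suffices to prove \eqref{minimizer0} for an arbitrary $v\in H^1(\Omega)$ with $v\ge 0$. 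Fix such a $v$, set $w:=v-u\in H^1(\Omega)$, and write $\lambda:=\frac1m\int_{\partial\Omega}u\,d\sigma\ge 0$ (recall $u\ge 0$). Expanding the square in the Dirichlet term gives
$$\frac12\int_\Omega|\nabla v|^2\,dx\ge\frac12\int_\Omega|\nabla u|^2\,dx+\int_\Omega\nabla u\cdot\nabla w\,dx,$$
and, writing $A:=\int_{\partial\Omega}u\,d\sigma$, $B:=\int_{\partial\Omega}v\,d\sigma$, convexity of $t\mapsto t^2$ yields $B^2\ge A^2+2A(B-A)$, i.e.\ $\frac{1}{2m}(B^2-A^2)\ge\lambda\int_{\partial\Omega}w\,d\sigma$. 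Combining these with the exact identity $-\int_\Omega v\,dx=-\int_\Omega u\,dx-\int_\Omega w\,dx$, we obtain
$$\mathcal{J}_m(v,\Omega)-\mathcal{J}_m(u,\Omega)\ge\int_\Omega\nabla u\cdot\nabla w\,dx-\int_\Omega w\,dx+\lambda\int_{\partial\Omega}w\,d\sigma.$$

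Next I would use that $u$ is a weak solution of \eqref{critical2.1}: since $-\Delta u=1$ in $\Omega$ in the distributional sense and $u\in C^1(\overline\Omega)$ with $\Omega$ a $C^2$ domain, Green's identity holds for the test function $w\in H^1(\Omega)$, namely $\int_\Omega\nabla u\cdot\nabla w\,dx=\int_\Omega w\,dx+\int_{\partial\Omega}\frac{\partial u}{\partial\nu}\,w\,d\sigma$, where $\frac{\partial u}{\partial\nu}$ is the classical normal derivative. Substituting, the first two terms on the right cancel and
$$\mathcal{J}_m(v,\Omega)-\mathcal{J}_m(u,\Omega)\ge\int_{\partial\Omega}\Big(\frac{\partial u}{\partial\nu}+\lambda\Big)\,w\,d\sigma.$$
Now I split $\partial\Omega$ according to \eqref{critical2.1}. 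On $\{x\in\partial\Omega:u(x)>0\}$ the second line of \eqref{critical2.1} gives $\frac{\partial u}{\partial\nu}+\lambda=0$, so the integrand vanishes there. On $\{x\in\partial\Omega:u(x)=0\}$ the third line gives $\frac{\partial u}{\partial\nu}+\lambda\ge 0$, while $w=v-u=v\ge 0$ there (using $v\ge 0$ and that $u|_{\partial\Omega}$ is continuous); hence the integrand is nonnegative. Therefore the right-hand side is $\ge 0$, which is exactly \eqref{minimizer0}.

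The computation is short, and the only point requiring care is the step invoking the weak formulation: one must make precise the sense in which $u$ solves the mixed equality/inequality boundary conditions in \eqref{critical2.1}, so that Green's identity $\int_\Omega\nabla u\cdot\nabla w\,dx=\int_\Omega w\,dx+\int_{\partial\Omega}\frac{\partial u}{\partial\nu}w\,d\sigma$ holds for all $w\in H^1(\Omega)$ and so that the pointwise sign conditions on $\{u=0\}$ and $\{u>0\}$ may legitimately be used inside the boundary integral. Since $u\in C^1(\overline\Omega)$ and $\Delta u=-1\in L^2(\Omega)$ with $\partial\Omega$ of class $C^2$, the normal derivative is a genuine continuous function on $\partial\Omega$ and Green's identity is valid; so this point is routine, and I expect no substantial obstacle beyond bookkeeping.
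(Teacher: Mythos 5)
Your proof is correct, and it differs from the paper's in a way worth noting. The paper does not reduce to nonnegative competitors: it multiplies \eqref{critical2.1} by $u-v$ for an arbitrary $v\in H^1(\Omega)$, integrates by parts, and then splits the boundary integral over $\partial\Omega\cap\{u>0\}$, $\partial\Omega\cap\{u=0\}\cap\{v\le 0\}$ and $\partial\Omega\cap\{u=0\}\cap\{v>0\}$; to control the piece where $v\le 0$ it needs the extra fact that $\frac{\partial u}{\partial\nu}\le 0$ on $\{u=0\}\cap\partial\Omega$ (which rests on $u>0$ in $\Omega$, i.e.\ a strong maximum principle step since $-\Delta u=1>0$), and it closes the argument by bounding the boundary term by $\big(\frac1m\int_{\partial\Omega}u\big)\int_{\partial\Omega}|v|$ and applying Young's inequality twice to reassemble $\frac12\int|\nabla v|^2$ and $\frac1{2m}\big(\int_{\partial\Omega}|v|\big)^2$. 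Your route — first replace $v$ by $|v|$, then exploit convexity of the Dirichlet and boundary terms to reduce \eqref{minimizer0} to the first-order inequality $\int_{\partial\Omega}\big(\frac{\partial u}{\partial\nu}+\lambda\big)(v-u)\,d\sigma\ge 0$, verified directly from \eqref{critical2.1}$_2$ and \eqref{critical2.1}$_3$ since $v-u=v\ge 0$ on $\{u=0\}$ — is essentially the observation that for a convex functional the variational inequality at $u$ over the cone of nonnegative functions suffices. What it buys is a shorter conclusion with no Young's inequality and, more substantively, no need for the sign $\frac{\partial u}{\partial\nu}\le 0$ on the contact set (hence no appeal to $u>0$ in $\Omega$); what the paper's version buys is that it treats sign-changing $v$ head-on without the preliminary symmetrization step. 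Both proofs use the same Green's identity for $u\in C^1(\overline\Omega)$ with $\Delta u=-1$ against an $H^1$ test function, and your remark on why the classical normal derivative can be used there is exactly the justification the paper leaves implicit.
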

\begin{proof} 
For any $v\in H^1(\Omega)$, multiplying \eqref{critical2.1} by $u-v$ and integrating over $\Omega$, we obtain
\begin{eqnarray}\label{minimizer00}
&&\int_\Omega |\nabla u|^2\,dx-\int_\Omega u\,dx-\int_{\partial\Omega}\frac{\partial u}{\partial\nu} u\,d\sigma\nonumber\\
&&=\int_\Omega \nabla u\cdot\nabla v\,dx-\int_\Omega v\,dx -\int_{\partial\Omega}\frac{\partial u}{\partial\nu} v\,d\sigma.
\end{eqnarray}
From \eqref{critical2.1}$_2$, we see that
\begin{eqnarray*}
-\int_{\partial\Omega}\frac{\partial u}{\partial\nu} u\,d\sigma=\big(\frac{1}{m}\int_{\partial\Omega} u\,d\sigma\big)\int_{\partial\Omega} u
=\frac{1}{m}\big(\int_{\partial\Omega} u\,d\sigma\big)^2.
\end{eqnarray*}
On the other hand, we have
\begin{eqnarray*}
&&-\int_{\partial\Omega}\frac{\partial u}{\partial\nu} v\,d\sigma=-\int_{\partial\Omega\cap\{u(x)>0\}}\frac{\partial u}{\partial\nu} v\,d\sigma
-\int_{\partial\Omega\cap\{u(x)=0\}}\frac{\partial u}{\partial\nu} v\,d\sigma\\
&&=\big(\frac{1}{m}\int_{\partial\Omega} u\,d\sigma\big)\int_{\partial\Omega\cap\{u(x)>0\}} v\,d\sigma
-\int_{\partial\Omega\cap\{u(x)=0\}}\frac{\partial u}{\partial\nu} v\,d\sigma\\
&&=\big(\frac{1}{m}\int_{\partial\Omega} u\,d\sigma\big)\int_{\partial\Omega} v\,d\sigma
-\int_{\partial\Omega\cap\{u(x)=0\}}\big(\frac{\partial u}{\partial\nu}+\frac{1}{m}\int_{\partial\Omega} u\,d\sigma\big)v \,d\sigma\\
&&=\big(\frac{1}{m}\int_{\partial\Omega} u\,d\sigma\big)\int_{\partial\Omega\cap\{v(x)>0\}} v\,d\sigma+
\big(\frac{1}{m}\int_{\partial\Omega} u\,d\sigma\big)\int_{\partial\Omega\cap\{v(x)\le 0\}} v\,d\sigma\\
&&\ \ -\big(\frac{1}{m}\int_{\partial\Omega} u\,d\sigma\big)\int_{\partial\Omega\cap\{u(x)=0\}\cap\{v(x)\le 0\}}v \,d\sigma
-\int_{\partial\Omega\cap\{u(x)=0\}\cap\{v(x)\le 0\}}
\frac{\partial u}{\partial\nu}v \,d\sigma\\
&&\ \ -\int_{\partial\Omega\cap\{u(x)=0\}\cap\{v(x)> 0\}}\big(\frac{\partial u}{\partial\nu}+\frac{1}{m}\int_{\partial\Omega} u\,d\sigma\big)v \,d\sigma\\
&&\le \big(\frac{1}{m}\int_{\partial\Omega} u\,d\sigma\big)\int_{\partial\Omega} |v|\,d\sigma-\int_{\partial\Omega\cap\{u(x)=0\}\cap\{v(x)\le 0\}}\frac{\partial u}{\partial\nu}v\,d\sigma\\
&&\ \ -\int_{\partial\Omega\cap\{u(x)=0\}\cap\{v(x)>0\}}\big(\frac{\partial u}{\partial\nu}+\frac{1}{m}\int_{\partial\Omega} u\,d\sigma\big)v \,d\sigma.
\end{eqnarray*}
It follows from \eqref{critical2.1}$_3$ that 
$${\big(\frac{\partial u}{\partial\nu}(x)+\frac{1}{m}\int_{\partial\Omega} u\,d\sigma\big)}v(x)\ge 0, \ \forall x\in \partial\Omega\cap\{u(x)=0\}\cap\{v(x)>0\},$$
and hence
$$\int_{\partial\Omega\cap\{u(x)=0\}\cap\{v(x)>0\}}\big(\frac{\partial u}{\partial\nu}+\frac{1}{m}\int_{\partial\Omega} u\,d\sigma\big)v \,d\sigma\ge 0.$$
Since $u\in C^1(\overline\Omega)$ satisfies $u>0$ in $\Omega$, it follows that $\frac{\partial u}{\partial\nu}(x)\le 0$ on
$\partial\Omega\cap\{u(x)=0\}$ and hence
$$\int_{\partial\Omega\cap\{u(x)=0\}\cap\{v(x)\le 0\}}\frac{\partial u}{\partial\nu}v\,d\sigma\ge 0.$$
Thus we obtain
\begin{eqnarray*}
-\int_{\partial\Omega}\frac{\partial u}{\partial\nu} v\,d\sigma\le \big(\frac{1}{m}\int_{\partial\Omega} u\,d\sigma\big)\int_{\partial\Omega} |v|\,d\sigma,
\end{eqnarray*}
and hence
\begin{eqnarray*}
&&\int_\Omega \nabla u\cdot\nabla v\,dx-\int_\Omega v\,dx -\int_{\partial\Omega}\frac{\partial u}{\partial\nu} v\,d\sigma\\
&&\le \int_\Omega \nabla u\cdot\nabla v\,dx-\int_\Omega v\,dx +\big(\frac{1}{m}\int_{\partial\Omega} u\,d\sigma\big)\int_{\partial\Omega} |v|\,d\sigma\\
&&\le \frac12\int_{\Omega}|\nabla u|^2\,dx+\frac12\int_{\Omega}|\nabla v|^2\,dx-\int_\Omega v\,dx 
+\frac{1}{2m}\big(\int_{\partial\Omega} u\,d\sigma\big)^2+\frac{1}{2m}\big(\int_{\partial\Omega} |v|\,d\sigma\big)^2.
\end{eqnarray*}
Substituting this into \eqref{minimizer00} yields that $\mathcal{J}_m(u,\Omega)\le \mathcal{J}_m(v,\Omega)$. 
\end{proof}

\bigskip
For $m>0$, it follows from the discussion above that if $u\in H^1(\Omega)$ is a critical point of $\mathcal{J}_m(\cdot,\Omega)$, then $u\ge 0$ in $\overline\Omega$. 
If, in addition, $u>0$ in $\overline\Omega$, then it follows from \eqref{critical2.1} that $u$ solves
\begin{equation}\label{critical2.2}
\begin{cases}
-\Delta u=1 & \ {\rm{in}}\ \Omega,\\
\displaystyle\frac{\partial u}{\partial\nu}=-\frac{1}{m}\int_{\partial\Omega} u\,d\sigma & \ {\rm{on}}\ \partial\Omega.
\end{cases}
\end{equation}
Thus it follows from the standard elliptic theory that $u\in C^{1,\beta}(\overline\Omega)$ for all $0<\beta<1$. 
However, the following example shows that there exists a bounded $C^2$-domains $\Omega$ such that any minimizer
$u\in H^1(\Omega)$ to $\mathcal{J}_m(\cdot,\Omega)$ has zero points on $\partial\Omega$.

\begin{example} For $n=2$ and $\Omega=\{x\in\mathbb R^2: \ 1<|x|<2\}$. If $0<m<3\pi-4\pi\ln 2$,
then $u(x)=-\frac14|x|^2+c_1\ln |x|+c_2$ for $x\in \Omega$, with
$$c_1=\frac{m+3\pi}{2m+4\pi \ln 2}, \ c_2=\frac{2m-(m-\pi)\ln 2}{2m +4\pi \ln 2},$$
is the unique minimizer of $\mathcal{J}_m(\cdot,\Omega)$ over $H^1(\Omega)$. 
\end{example}

\begin{proof} Notice that $\partial \Omega=\partial B_1\cup\partial B_2$.
It is easy to see that $u>0$ in $\Omega\cup \partial B_1$ and $u=0$ on $\partial B_2$,
and satisfies
\begin{equation}
\begin{cases}
-\Delta u= 1& \ {\rm{in}}\ \Omega,\\
\displaystyle\frac{\partial u}{\partial\nu}=-\frac{1}{m}\int_{\partial B_1}u& \ {\rm{on}}\ \partial B_1,\\
\displaystyle\frac{\partial u}{\partial\nu}>-\frac{1}{m}\int_{\partial \Omega}u& \ {\rm{on}}\ \partial B_2.
\end{cases}
\end{equation}
From Lemma \ref{minimizer}, $u$ is a minimizer of $\mathcal{J}_m(\cdot, \Omega)$ in $H^1(\Omega)$. 
\end{proof}

\begin{proposition} \label{stationarity} If $u\in W^{2,2}(\Omega)$ is a critical point of $\mathcal{J}_m(\cdot,\Omega)$, then it is also critical with respect to the domain variation, i.e.,
\begin{equation}\label{stationarity1}
\frac{d}{dt}\big|_{t=0}\mathcal{J}_m(u^t,\Omega)=0, 
\end{equation}
where $u^t(x)=u(F(t,x))$, and $F(\cdot,\cdot):(-\delta, \delta)\times\overline\Omega\mapsto\overline\Omega$ is a $C^1$-family of $C^2$-diffeomorphism satisfying
$$
\begin{cases} F(0, x)=x, \ \forall x\in\overline\Omega, \\
F(t,x)\in \partial\Omega, \ \forall (x,t)\in\partial\Omega\times (-\delta,\delta).
\end{cases}
$$
\end{proposition}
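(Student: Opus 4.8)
The plan is to recognize that this particular variation does not actually deform the domain, and then to deduce stationarity from minimality rather than from an explicit shape derivative. Since $F(t,\cdot)$ is a $C^2$-diffeomorphism of $\overline\Omega$ with $F(t,\partial\Omega)=\partial\Omega$, it maps $\Omega$ onto $\Omega$ for every $t\in(-\delta,\delta)$; differentiating the relation $F(t,x)\in\partial\Omega$ in $t$ also shows that the velocity field $V:=\partial_t F(0,\cdot)$ is tangent to $\partial\Omega$, i.e. $V\cdot\nu=0$ on $\partial\Omega$. In particular each competitor $u^t:=u\circ F(t,\cdot)$ lies in $H^1(\Omega)$ — the change of variables $y=F(t,x)$ preserves $W^{1,2}$ because $F(t,\cdot)\in C^2(\overline\Omega)$ is a diffeomorphism — and $u^t\ge 0$ since $u=u_\Omega\ge 0$ on $\overline\Omega$. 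Recall that, $\mathcal{J}_m(\cdot,\Omega)$ being convex on $H^1(\Omega)$, its (unique) critical point $u$ is in fact its minimizer over all of $H^1(\Omega)$. Hence, with $h(t):=\mathcal{J}_m(u^t,\Omega)$, we have $h(t)\ge h(0)$ for every $t\in(-\delta,\delta)$: $t=0$ is an interior minimum of $h$.

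It therefore suffices to show that $h$ is differentiable at $t=0$, since then automatically $h'(0)=0$, which is exactly \eqref{stationarity1}. To see the differentiability, use $u^t\ge 0$ to replace $|u^t|$ by $u^t$ in the boundary term and change variables $y=F(t,x)$ in each of the three integrals defining $\mathcal{J}_m$, obtaining
\[ h(t)=\tfrac12\int_\Omega\langle A_t\nabla u,\nabla u\rangle\,dx-\int_\Omega b_t\,u\,dx+\tfrac1{2m}\Big(\int_{\partial\Omega}J_t\,u\,d\sigma\Big)^2, \]
where $A_t$ and $b_t$ are built from the Jacobian $DF(t,\cdot)$ and its determinant, $J_t$ is the tangential Jacobian of $F(t,\cdot)|_{\partial\Omega}$, all three are $C^1$ in $t$ uniformly on $\overline\Omega$, and $A_0\equiv\mathrm{Id}$, $b_0\equiv 1$, $J_0\equiv 1$. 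With $u$ held fixed, the $t$-dependence of $h$ now resides only in these smooth coefficients, so $h\in C^1$ near $0$ (and under the stated hypothesis $u\in W^{2,2}(\Omega)$ one can equivalently regard $t\mapsto u^t$ as a $C^1$ curve in $H^1(\Omega)$ with derivative $\nabla u\cdot V$ at $t=0$, differentiating the smooth, quadratic-plus-linear energy along it via the chain rule).

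The only genuinely technical point I expect is this differentiation step — justifying the change of variables for Sobolev functions and differentiation under the integral sign; the contact set $\partial\Omega\cap\{u=0\}$ creates no obstruction here precisely because $|u^t|=u^t$, so no absolute value is ever differentiated. As a more hands-on alternative one can compute directly
\[ h'(0)=\int_\Omega\nabla u\cdot\nabla\phi\,dx+\tfrac1m\Big(\int_{\partial\Omega}u\,d\sigma\Big)\int_{\partial\Omega}\phi\,d\sigma-\int_\Omega\phi\,dx,\qquad \phi:=\nabla u\cdot V, \]
then integrate by parts using $-\Delta u=1$ (legitimate since $u\in W^{2,2}(\Omega)$) to reduce this to $\int_{\partial\Omega}\big(\partial_\nu u+\tfrac1m\int_{\partial\Omega}u\,d\sigma\big)\phi\,d\sigma$, and finally observe that this integral vanishes: on $\partial\Omega\cap\{u>0\}$ one has $\partial_\nu u+\tfrac1m\int_{\partial\Omega}u\,d\sigma=0$ by \eqref{critical2.1}, while on $\partial\Omega\cap\{u=0\}$ one has $\phi=\nabla_\tau u\cdot V=0$ $\mathscr{H}^{n-1}$-a.e., since $V$ is tangential and the tangential gradient of the nonnegative function $u|_{\partial\Omega}\in W^{1,2}(\partial\Omega)$ vanishes a.e. on its zero set. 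This second route also makes clear why $W^{2,2}$-regularity, and not merely $u\in H^1(\Omega)$, is the natural hypothesis.
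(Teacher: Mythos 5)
Your proposal is correct, and it in fact contains two arguments. Your ``hands-on alternative'' is essentially the paper's own proof: the paper pulls the $t$-dependence back through $F(t,\cdot)$, differentiates at $t=0$, integrates by parts using $-\Delta u=1$, and lands on exactly your boundary integral $\int_{\partial\Omega}\big(\frac{\partial u}{\partial\nu}+\frac1m\int_{\partial\Omega}u\,d\sigma\big)\,\eta\cdot\nabla_{\rm{tan}}u\,d\sigma$, which it then annihilates by restricting to $\partial\Omega\cap\{u>0\}$ and invoking \eqref{critical2.1}$_2$; you make explicit the point the paper leaves implicit, namely that $\nabla_{\rm{tan}}u=0$ $\mathscr{H}^{n-1}$-a.e.\ on $\partial\Omega\cap\{u=0\}$ because the trace of $u$ is a nonnegative Sobolev function on $\partial\Omega$ vanishing there. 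Your first route is genuinely different and arguably cleaner: since each $F(t,\cdot)$ is a diffeomorphism of $\overline\Omega$ preserving $\partial\Omega$, it maps $\Omega$ onto $\Omega$, so the $u^t$ are competitors in the fixed space $H^1(\Omega)$; reading ``critical point'' of the convex functional $\mathcal{J}_m(\cdot,\Omega)$ as its unique minimizer (which the paper itself asserts just before Proposition \ref{regularity}), the function $t\mapsto\mathcal{J}_m(u^t,\Omega)$ has an interior minimum at $t=0$, and \eqref{stationarity1} follows once differentiability at $t=0$ is checked, which you obtain by transporting all $t$-dependence into the Jacobian coefficients $A_t,b_t,J_t$ and differentiating under the integral with $u$ frozen. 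What this buys is a computation-free proof that never uses the Euler--Lagrange system \eqref{critical2.1}, and for that route neither the $W^{2,2}$ hypothesis nor even the nonnegativity of $u$ is really needed (since $|u^t|=|u|\circ F(t,\cdot)$, the absolute value can be absorbed before differentiating); its only cost is the critical-point-equals-minimizer identification, which is consistent with the paper's conventions. Both routes are sound, and the remaining technical step you flag (differentiation under the integral after the change of variables) is no more than what the paper itself does formally.
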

\begin{proof} Define the deformation vector field $\eta(x)=\frac{d}{dt}|_{t=0}F(t,x)$ for $x\in\overline\Omega$. 
Then
$$\eta(x)\in T_x(\partial\Omega) \ {\rm{or}}\ \eta(x)\cdot\nu(x)=0, \ \forall x\in\partial\Omega.$$
By direct calculations, we have
\begin{eqnarray*}
&&\frac{d}{dt}\big|_{t=0}\big(\frac12 \int_\Omega |\nabla u^t|^2\,dx\big)\\
&=&-\frac12\int_{\Omega}|\nabla u|^2 {\rm{div}}\eta\,dx+\int_\Omega u_i u_j \eta^i_j\,dx\\
&=&-\frac12\int_{\Omega}|\nabla u|^2 {\rm{div}}\eta\,dx+\int_{\partial\Omega}\eta\cdot\nabla u \frac{\partial u}{\partial\nu}\,d\sigma-\int_\Omega \Delta u (\eta\cdot\nabla u)\,dx
-\frac12\int_\Omega \eta\cdot \nabla(|\nabla u|^2)\,dx\\
&=&-\frac12\int_{\Omega}{\rm{div}}(|\nabla u|^2\eta)\,dx+\int_{\partial\Omega}\eta\cdot\nabla u \frac{\partial u}{\partial\nu}\,d\sigma-\int_\Omega \Delta u (\eta\cdot\nabla u)\,dx\\
&=&-\frac12\int_{\partial\Omega}|\nabla u|^2\eta\cdot\nu\,d\sigma+\int_{\Omega}\eta\cdot\nabla u\,dx+\int_{\partial\Omega}\eta\cdot\nabla u \frac{\partial u}{\partial\nu}\,d\sigma\\
&=&\int_{\Omega}\eta\cdot\nabla u\,dx+\int_{\partial\Omega}\eta\cdot\nabla_{\rm{tan}} u \frac{\partial u}{\partial\nu}\,d\sigma,
\end{eqnarray*}
where we have used the equation \eqref{critical2.1}$_1$, and $\nabla_{\rm{tan}} u=(\mathbb{I}_n-\nu\otimes\nu)\nabla u$.
\begin{eqnarray*}
\frac{d}{dt}\big|_{t=0}\big\{\frac1{2m} (\int_{\partial\Omega} u^t\,d\sigma)^2\big\}
=\frac1{m} \int_{\partial\Omega} u\,d\sigma\int_{\partial\Omega}\eta\cdot\nabla_{\rm{tan}} u\,d\sigma.
\end{eqnarray*}
It is readily seen that
\begin{eqnarray*}
&&\frac{d}{dt}\big|_{t=0}\big(-\int_\Omega u^t\,dx\big)=-\int_\Omega \eta \cdot\nabla u\,dx.
\end{eqnarray*}
Putting these identities together, we obtain that
\begin{eqnarray*}
\frac{d}{dt}\big|_{t=0}\mathcal{J}_m(u^t,\Omega)
&=&\int_{\partial\Omega}\eta\cdot\nabla_{\rm{tan}} u\big(\frac{\partial u}{\partial\nu}+\frac1{m} \int_{\partial\Omega} u\,d\sigma\big)\,d\sigma\\
&=&\int_{\partial\Omega\cap\{u>0\}}\eta\cdot\nabla_{\rm{tan}} u\big(\frac{\partial u}{\partial\nu}+\frac1{m} \int_{\partial\Omega} u\,d\sigma\big)\,d\sigma=0.
\end{eqnarray*}
This completes the proof. 
\end{proof}

\begin{definition}\label{critical} Given a bounded $C^{2}$-domain $\Omega\subset\mathbb R^n$, let $u=u_\Omega\in H^1(\Omega)$
be the unique minimizer of \eqref{min2.1}. We say that $(u, \Omega)$ is a critical point of $\mathcal{J}_{m}(\cdot,\cdot)$, if either
$I(t)=\mathcal{J}_m(u_{\Omega(t)}, \Omega(t))$ is not differentiable at $t=0$, or 
\begin{equation}\label{critical2.3}
\frac{d}{dt}\big|_{t=0} \mathcal{J}_m(u_{\Omega(t)}, \Omega(t))=0,
\end{equation}    
where $\Omega(t)=\{F(t,x): x\in\Omega\}$ and $u_{\Omega(t)}$ is the unique minimizer of $\mathcal{J}_m(\cdot, \Omega(t))$ over $H^1(\Omega(t))$.
Here $F(t, x): (-\delta, \delta)\times\overline\Omega\mapsto \mathbb R^n$ is any $C^1$-family of $C^{2}$-volume preserving diffeomorphism, 
that is generated by a vector field $\eta\in C^2(\overline\Omega,\R^n)$, i.e.,
$$\frac{dF}{dt}(t,x)=\eta(F(t,x)); \ F(0,x)=x,\ \forall x\in\Omega, \ -\delta<t<\delta.$$ 
\end{definition}
Here $(u_{\Omega(0)}, \Omega(0))=(u,\Omega)$.

\medskip
Now we have
\begin{theorem}\label{uniqueness} For $m>0$ and a bounded $C^{2}$-domain $\Omega\subset\mathbb R^n$, let $u_{\Omega}$ be the unique minimizer
of $\mathcal{J}_m(\cdot,\Omega)$ over $H^1(\Omega)$. If  $u_\Omega$ is positive in $\overline\Omega$, then $(u_\Omega, \Omega)$ is a critical point of $\mathcal{J}_m(\cdot,\cdot)$ 
if and only if the following identity holds:
\begin{equation}\label{stationarity2}
\frac{1}{2}|\nabla_{\rm{tan}}u_{\Omega}|^2 -u_\Omega-\frac12\big(\frac{1}{m}\int_{\partial \Omega} u_{\Omega}\big)^2 +\big(\frac{1}{m}\int_{\partial \Omega} u_{\Omega}\big) u_{\Omega}H \equiv 
{\rm{constant}}, \ {\rm{on}}\ \partial\Omega,
\end{equation}
where $H$ denotes the mean curvature of $\partial\Omega$. In particular, for any ball $B_R\subset\mathbb R^n$
with radius $R$, $(u_{B_R}, B_R)$ is a critical point of $\mathcal{J}_m(\cdot,\cdot)$.
\end{theorem}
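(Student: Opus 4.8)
The plan is to compute $I'(0)$, where $I(t):=\mathcal{J}_m(u_{\Omega(t)},\Omega(t))$ with $\Omega(t)=F_t(\Omega)$, by the Hadamard (envelope) principle: since $u_\Omega$ minimizes $\mathcal{J}_m(\cdot,\Omega)$ over $H^1(\Omega)$, the variation of $I$ in the state variable drops out, so $I'(0)$ should equal the derivative obtained by freezing and transporting the state function while varying only the domain. The hypothesis $u_\Omega>0$ on $\overline\Omega$ is what makes this rigorous. First, positivity on the compact set $\overline\Omega$ puts all of $\partial\Omega$ inside $\{u_\Omega>0\}$, so \eqref{critical2.1}--\eqref{critical2.2} give $-\Delta u_\Omega=1$ in $\Omega$ and $\partial_\nu u_\Omega\equiv -c$ on $\partial\Omega$, where $c:=\frac1m\int_{\partial\Omega}u_\Omega\,d\sigma$; in particular $\partial_\nu u_\Omega$ is \emph{constant} on $\partial\Omega$, which is the source of the cancellations below. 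Second, the uniform $C^{1,\beta}$-estimates of Proposition \ref{regularity} (the $\Omega(t)$ have $C^2$-norms bounded uniformly in $t$, and $\int_{\partial\Omega(t)}u_{\Omega(t)}$ is bounded by the energy), together with uniqueness of the minimizer, yield $u_{\Omega(t)}\circ F_t\to u_\Omega$ in $C^0(\overline\Omega)$ as $t\to0$, so $u_{\Omega(t)}>0$ on $\overline{\Omega(t)}$ for $|t|$ small.

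Next I would make the envelope argument precise. Pulling back by $F_t$, one has $I(t)=\widetilde G(t,v_t)$ with $v_t:=u_{\Omega(t)}\circ F_t$ and
\[
\widetilde G(t,v)=\tfrac12\int_\Omega\langle A_t\nabla v,\nabla v\rangle\,dx+\tfrac1{2m}\Big(\int_{\partial\Omega}|v|\,J^\partial_t\,d\sigma\Big)^2-\int_\Omega v\,J_t\,dx,
\]
where $J_t=\det DF_t$, $A_t=J_t(DF_t)^{-1}(DF_t)^{-\top}$, and $J^\partial_t$ is the tangential Jacobian of $F_t|_{\partial\Omega}$. By the first step the absolute value may be dropped near $(0,u_\Omega)$, so in a space where the trace embeds into $C^0$ (e.g.\ $W^{1,p}(\Omega)$ with $p>n$, which contains all $u_{\Omega(t)}$) $\widetilde G$ is smooth in $(t,v)$, strictly convex in $v$, $v_0=u_\Omega$ is its unique critical point (by uniqueness and positivity of the minimizer), and $\partial_{vv}\widetilde G(0,u_\Omega)$ is non-degenerate (coercivity follows from a Poincar\'e-type inequality and Rellich). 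The implicit function theorem then gives that $t\mapsto v_t$ is $C^1$ near $0$, so $I$ is differentiable at $0$ and, since $\partial_v\widetilde G(0,u_\Omega)=0$,
\[
I'(0)=\partial_t\widetilde G(0,u_\Omega)=\frac{d}{dt}\Big|_{t=0}\mathcal{J}_m\big(u_\Omega\circ F_t^{-1},\Omega(t)\big).
\]
This step, the differentiability of $I$ at $0$, is the main obstacle, and it is exactly where $u_\Omega>0$ on $\overline\Omega$ is needed: positivity eliminates the non-smooth $|u|$-term near the minimizer and, via the uniform elliptic estimates and uniqueness, supplies the $C^0$-convergence that feeds the implicit function theorem.

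Then I would compute this shape derivative by the standard identities $\frac{d}{dt}|_{t=0}J_t=\operatorname{div}\eta$, $\frac{d}{dt}|_{t=0}(DF_t)^{-1}=-D\eta$, $\frac{d}{dt}|_{t=0}A_t=(\operatorname{div}\eta)\mathbb{I}_n-D\eta-(D\eta)^\top$, $\frac{d}{dt}|_{t=0}J^\partial_t=\operatorname{div}_{\partial\Omega}\eta$. Differentiating $\widetilde G(\cdot,u_\Omega)$ and integrating by parts over $\Omega$ (the Hessian-of-$u_\Omega$ terms cancel and $-\Delta u_\Omega=1$ absorbs the remaining interior integrals) reduces $I'(0)$ to a boundary integral; then, inserting $\partial_\nu u_\Omega\equiv -c$, splitting $\nabla u_\Omega$ and $\eta$ into tangential and normal parts, writing $\operatorname{div}_{\partial\Omega}\eta=\operatorname{div}_{\partial\Omega}\eta_{\rm{tan}}+H(\eta\cdot\nu)$, and integrating the tangential piece by parts on the closed manifold $\partial\Omega$ so that all purely tangential contributions cancel, one arrives at
\[
I'(0)=\int_{\partial\Omega}\Big(\tfrac12|\nabla_{\rm{tan}}u_\Omega|^2-u_\Omega-\tfrac12 c^2+c\,u_\Omega H\Big)(\eta\cdot\nu)\,d\sigma,
\]
i.e.\ $I'(0)$ is the integral over $\partial\Omega$ of the left-hand side of \eqref{stationarity2} tested against $\eta\cdot\nu$. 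This bookkeeping is delicate but routine, parallel to the proof of Proposition \ref{stationarity}.

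Finally, volume preservation of the family forces $\int_{\partial\Omega}\eta\cdot\nu\,d\sigma=0$; conversely every mean-zero $\varphi\in C^\infty(\partial\Omega)$ is realized as $\eta\cdot\nu$ for some admissible (e.g.\ divergence-free) vector field, and these normal traces are dense in $\{\varphi\in L^2(\partial\Omega):\int_{\partial\Omega}\varphi=0\}$. Hence $I'(0)=0$ for every admissible $\eta$ --- equivalently, in view of the differentiability established above, $(u_\Omega,\Omega)$ is a critical point in the sense of Definition \ref{critical} --- if and only if the continuous function $\tfrac12|\nabla_{\rm{tan}}u_\Omega|^2-u_\Omega-\tfrac12 c^2+c\,u_\Omega H$ is orthogonal to all mean-zero functions on $\partial\Omega$, i.e.\ constant there, which is \eqref{stationarity2}. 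For $\Omega=B_R$, the explicit formula for $u_{B_R}$ gives $u_{B_R}\equiv\frac{m}{n^2\omega_n R^{n-2}}>0$ on $\partial B_R$ and $u_{B_R}>0$ on $\overline{B_R}$, so the theorem applies; there $\nabla_{\rm{tan}}u_{B_R}\equiv 0$ and $u_{B_R},H$ are constant, so the bracket is constant and \eqref{stationarity2} holds, i.e.\ $(u_{B_R},B_R)$ is a critical point.
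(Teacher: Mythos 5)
Your proposal is correct and lands on exactly the right first-variation formula, but it executes the two delicate steps by a different technical route than the paper. The paper works directly on the moving domains: it shows (via the elliptic estimates of Proposition \ref{regularity} and uniqueness) that $u(t)=u_{\Omega(t)}$ stays positive and solves the clean Neumann problem \eqref{critical2.4} for small $t$, asserts $t\mapsto \mathcal{J}_m(u(t),\Omega(t))$ is $C^1$, and then differentiates with the transport formulas \eqref{volume-derivative}, \eqref{surface-derivative}, letting the state derivative $v=\partial_t u$ appear explicitly and cancelling all $v$-terms through the equation $-\Delta u(t)=1$ and the constant Neumann datum. You instead pull everything back to the fixed domain, drop the absolute value near the positive minimizer, and use strict convexity plus the implicit function theorem and the envelope identity so that the state derivative never enters; the shape derivative is then that of the transported state $u_\Omega\circ F_t^{-1}$, computed through the Jacobian identities for $A_t$, $J_t$, $J_t^\partial$. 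The two routes are equivalent in substance (the paper's cancellation of the $v$-terms \emph{is} the envelope principle), but yours is more explicit about why $I(t)$ is differentiable at $t=0$ — a point the paper only asserts — and more explicit about the converse direction, namely that every mean-zero function on $\partial\Omega$ arises as $\eta\cdot\nu$ of an admissible volume-preserving flow, which the paper leaves implicit. Conversely, the paper's direct computation avoids your functional-analytic setup entirely. Two small remarks: (i) your nondegeneracy/IFT step should be run in $H^1(\Omega)$, where the quadratic form $w\mapsto\int_\Omega|\nabla w|^2+\frac1m\bigl(\int_{\partial\Omega}w\bigr)^2$ is coercive by the Rellich/trace-compactness argument you indicate — invertibility of the second variation in $W^{1,p}$, $p>n$, is not the natural statement; once the absolute value is removed the functional is quadratic in $v$, so the $H^1$ setting suffices, with the $W^{1,p}$ bounds only needed for the $C^0$-convergence that removes $|\cdot|$; (ii) your final bracket $\tfrac12|\nabla_{\rm tan}u_\Omega|^2-u_\Omega-\tfrac12 c^2+c\,u_\Omega H$ agrees with the theorem's \eqref{stationarity2} and with the later formula \eqref{critical3.1}; the paper's intermediate displays \eqref{critical2.5}--\eqref{critical2.6} carry sign slips on the $u$ and $uH$ terms, so your computation is the consistent one. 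The ball case is handled identically in both arguments.
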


\begin{proof} For simplicity, denote $u=u_\Omega$. Since $u\in C(\overline\Omega)$ is positive, it follows that $u$ solves \eqref{critical2.3} 
so that $u\in C^{1,\alpha}(\overline\Omega)\cap W^{2,2}(\Omega)$. Hence there exists $\delta_0>0$ such that $u\ge \delta_0$ in $\overline\Omega$.
For a small $0<\delta_1<<\delta_0$ and an open set $U\supset\overline\Omega$, let $F(t,x): (-\delta_1,\delta_1)\times U\mapsto\mathbb R^n$ be a $C^1$-family of $C^{2}$-volume preserving
diffeomorphism, generated by a vector field $\eta\in C^{2}(U, \mathbb R^n)$. 
It is readily seen that 
$\Omega(t)=F(t)(\Omega)$, $-\delta_1<t<\delta_1$, is a $C^1$-family of  bounded $C^{2}$-domains. By an argument similar to that of Proposition {\ref{regularity}}, we can show
that $u(t)\equiv u_{\Omega(t)}(F(t,\cdot))\rightarrow u$ in $C^0(\overline\Omega)$ as $t\rightarrow 0$
so that there exists $0<\delta_2<\delta_1$ such that
$u(t)(y)\ge \frac{\delta_0}2$ for $y\in \overline{\Omega(t)}$ and $t\in (-\delta_2,\delta_2)$. Hence $u(t)$, $-\delta_2<t<\delta_2$, solves
\begin{equation}\label{critical2.4}
\begin{cases}
-\Delta u(t)=1 & \ {\rm{in}}\ \Omega(t),\\
\displaystyle\frac{\partial}{\partial\nu} u(t)=-\frac{1}{m}\int_{\partial\Omega(t)} u(t)(y)\,d\sigma & \ {\rm{on}}\ \partial\Omega(t).
\end{cases}
\end{equation}
Applying Proposition {\ref{regularity}} again, we have that for any $1<p<\infty$,
$$\big\|u(t)\big\|_{W^{2,2}(\Omega(t))}+\big\|u(t)\big\|_{W^{1,p}(\Omega(t))}\le C(p), \ \ t\in (-\delta_2,\delta_2).$$
This implies $\mathcal{J}_m(u(t),\Omega(t))\in C^1((-\delta_2, \delta_2))$.

It follows from $|\Omega(t)|=|\Omega|$ for $-\delta_2<t<\delta_2$ that 
\begin{equation}\label{divergence-free}
\int_\Omega {\rm{div}}\eta=0.
\end{equation}
Now we calculate $\frac{d}{dt}\mathcal{J}_m(u(t),\Omega(t))$ for $t\in (-\delta_2,\delta_2)$. We claim that
\begin{eqnarray}\label{first-derivative0}
\frac{d}{dt}\mathcal{J}_m(u(t),\Omega(t))
&=& \int_{\partial \Omega(t)}\big[\frac{1}{2}|\nabla_{\rm{tan}}u(t)|^2 -\frac{1}{2}|\nabla_{\nu}  u(t)|^2 -u(t)\nonumber\\
&&\qquad\qquad+\big(\frac{1}{m}\int_{\partial \Omega(t)} u(t)\big) u(t)H(t) \big]\eta \cdot \nu\,d\sigma,
\end{eqnarray}
for all $t\in (-\delta_2,\delta_2)$. Here $H(t)$ denotes the mean curvature of $\partial\Omega(t)$,
and $\nabla_{\rm{tan}}f=(\mathbb{I}_n-\nu\otimes\nu)\nabla f$ denotes the tangential derivative of $f$ on $\partial\Omega(t)$.

To simplify the proof, denote $u(t,x)=u(t)(x)$ and set $v(t,x)\equiv\frac{\partial}{\partial t} u(t,x)$, $x\in\Omega(t)$.  Notice that
$\Omega=\Omega(0)$ and $u_\Omega(x)=u(0,x)$, $x\in\Omega$. Recall the formula \cite[Corollary 5.2.8]{Henrot}
\begin{equation}\label{volume-derivative}
\frac{d}{dt}\int_{\Omega(t)} f(t,y)\,dy=\int_{\Omega(t)} \frac{\partial f}{\partial t}(t,y)\,dy+\int_{\partial\Omega(t)} f(t,y)\eta(y)\cdot\nu(t,y)\,d\sigma,
\end{equation}
for any $f\in C^1(\big\{(t,x): \ t\in (-\delta_2,\delta_2), \ x\in \Omega(t)\big\})$, where $\nu(t,\cdot)$ denotes the outward unit normal of $\partial\Omega(t)$.
Applying \eqref{volume-derivative}, we can calculate
\begin{eqnarray*}
&&I_1(t)\equiv\frac{d}{dt}\int_{\Omega(t)} \frac{1}{2}|\nabla u(t,x)|^2\,dx\\
&&=\int_{\Omega(t)} \nabla u(t,x) \cdot\nabla v(t,x)\,dx+\int_{\partial \Omega(t)} \frac{1}{2}|\nabla u(t,x)|^2 \eta(x) \cdot \nu(t,x) \,d\sigma\\
&&=-\int_{\Omega(t)}\Delta u(t,x)v(t,x)\,dx+\int_{\partial \Omega(t)} v(t,x) \partial_{\nu} u(t,x)\,d\sigma\\
&&\ \ +\int_{\partial \Omega(t)}\frac{1}{2}|\nabla u(t,x)|^2 \eta(x) \cdot \nu(t,x) \,d\sigma,
\end{eqnarray*}
and
\begin{eqnarray*}
I_3(t)\equiv\frac{d}{dt}\int_{\Omega(t)} u(t,x)\,dx
=\int_{\Omega(t)} v(t,x)\,dx+\int_{\partial \Omega(t)} u(t,x)\eta(x) \cdot\nu(t,x)\,d\sigma.
\end{eqnarray*}
Also recall the formula \cite[Proposition 5.4.18]{Henrot}
\begin{eqnarray}\label{surface-derivative}
\frac{d}{dt}\int_{\partial\Omega(t)} f(t,x)\,dx&=&\int_{\partial\Omega(t)} \big(\frac{\partial f}{\partial t}(t,x)+\frac{\partial f}{\partial\nu}(t,x)\eta(x)\cdot\nu(t,x)\big)\,d\sigma\nonumber\\
&&\ +\int_{\partial\Omega(t)} f(t,x)H(t)(x)\eta(x)\cdot\nu(t,x)\,d\sigma,
\end{eqnarray}
for any $f\in C^1(\big\{(t,x): \ t\in (-\delta_2,\delta_2), \ x\in \Omega(t)\big\})$. 
Applying \eqref{surface-derivative} and \eqref{critical2.4}, we find
\begin{eqnarray*}
&&I_2(t)\equiv\frac{d}{dt}\big\{\frac{1}{2m}\big(\int_{\partial \Omega_t} u(t,x)\,d\sigma\big)^2\big\}\\
&&=\big(\frac{1}{m}\int_{\partial \Omega(t)}u(t,x)\,d\sigma\big)\int_{\partial {\Omega(t)}} \big(v(t,x)
+(\frac{\partial u}{\partial\nu}(t,x) +u(t,x) H(t,x)) \eta(x) \cdot \nu(t,x)\big) \,d\sigma\\
&&=-\int_{\partial {\Omega(t)}} \frac{\partial u}{\partial\nu}(t,x)\big[v(t,x)
+(\frac{\partial u}{\partial\nu}(t,x) +u(t,x) H(t,x)) \eta(x) \cdot \nu(t,x)\big] \,d\sigma,
\end{eqnarray*}
where $H(t,x)=H(t)(x)$ denotes the mean curvature of $\partial\Omega(t)$ at $x\in \partial\Omega(t)$.

Adding $I_1(t)$, $I_2(t)$, and $-I_3(t)$ together, and applying the equation \eqref{critical2.4}$_1$ we obtain that
\begin{eqnarray}\label{critical2.5}
&&\frac{d}{dt}\mathcal{J}_m(u(t),\Omega(t))=I_1(t)+I_2(t)-I_3(t)\\
&&=\int_{\Omega(t)}(-\Delta u(t,x)-1)v(t,x)\,dx\nonumber\\
&&+\int_{\partial \Omega(t)}\big(\frac{1}{2}|\nabla u(t,x)|^2-|\frac{\partial u}{\partial\nu}|^2(t,x)+u(t,x)-\frac{\partial u}{\partial\nu}(t,x) u(t,x)H(t,x)\big)\eta(x) \cdot \nu(t,x) \,d\sigma\nonumber\\
&&=\int_{\partial \Omega(t)}\big(\frac{1}{2}|\nabla_{\rm{tan}} u(t,x)|^2-\frac12|\frac{\partial u}{\partial\nu}|^2(t,x)+u(t,x)-\frac{\partial u}{\partial\nu}(t,x) u(t,x)H(t,x)\big)\eta(x) \cdot \nu(t,x) \,d\sigma.
\nonumber
\end{eqnarray}
Thus, by setting $t=0$ and applying \eqref{critical2.4}$_2$, we obtain that
\begin{eqnarray}\label{critical2.6}
&&\frac{d}{dt}\big|_{t=0}\mathcal{J}_m(u(t),\Omega(t))\nonumber\\
&&=\int_{\partial \Omega}\Big(\frac{1}{2}|\nabla_{\rm{tan}} u|^2-\frac12|\frac{\partial u}{\partial\nu}|^2+u-(\frac{1}{m}\int_{\partial\Omega} u\,d\sigma)
uH\Big)\eta(x) \cdot \nu\,d\sigma.
\end{eqnarray}
Notice that for any given $C^1$-family of volume preserving $C^2$-diffeomorphism maps $F(t,x):(-\delta_1,\delta_1) \times \overline \Omega \mapsto \mathbb{R}^n$ for some $\delta_1>0$, it is necessary that the velocity field $\eta$ satisfies $\int_{\partial \Omega} \eta \cdot \nu d\sigma=0$. 
Substituting such {an} $\eta$ into \eqref{critical2.6}, we see that \eqref{stationarity2} holds
iff $(u_\Omega,\Omega)$ is a critical point of $\mathcal{J}_m(\cdot,\cdot)$.

Recall that when $\Omega=B_R$, the unique critical point of $\mathcal{J}_m(\cdot, B_R)$ is given by
\begin{equation}\label{ball-solution}
u_{B_R}(x)=\frac{R^2-|x|^2}{2n}+\frac{m}{n^2\omega_n R^{n-2}}, \ x\in B_R,
\end{equation}
where $\omega_n$ is the volume of the unit ball  in $\mathbb R^n$.
Since $u_{B_R}$ is smooth and positive in $\overline {B_R}$, and satisfies \eqref{stationarity2}, it follows that $(u_{B_R}, B_R)$ is a critical point of $\mathcal{J}_m(\cdot,\cdot)$. 
\end{proof}

\bigskip

\section{Stability of $(u_{B_R}, B_R)$}
\setcounter{equation}{0}
\setcounter{theorem}{0}

It follows from Theorem {\ref{uniqueness}} that for any $R>0$, $(u_{B_R}, B_R)$ is a critical point for $\mathcal{J}_m(\cdot,\cdot)$ for any $m>0$.
In this section, we will prove Theorem \ref{new3}, namely, {$(u_{B_R},B_R)$ is a} stable critical point
of $\mathcal{J}_m(\cdot,\cdot)$.

\begin{proof}[Proof of Theorem \ref{new3}]
It follows from the discussion in the previous section  that there exists $\delta_0>0$ such that $u(t,x)=u_{\Omega(t)}(x)$ is positive, satisfies
\eqref{critical2.4},  and is smooth in $\overline\Omega(t)$ for $t\in (-\delta,\delta)$. Hence by the formula \eqref{critical2.5} we have that for $t\in (-\delta,\delta)$, 
\begin{eqnarray}\label{critical3.1}
&&\frac{d}{dt}\mathcal{J}_m(u(t), \Omega(t))\nonumber\\
&&=\int_{\partial \Omega(t)}\left[\frac{1}{2}|\nabla u|^2(t,x) -|\frac{\partial u}{\partial\nu}|^2(t,x) -u(t,x)-\frac{\partial u}{\partial\nu}(t,x) u(t,x)H(t,x) \right]
\eta(x) \cdot \nu(t,x)\,d\sigma\nonumber\\
&&=I(t)+II(t)+III(t)+IV(t).
\end{eqnarray}
To simplify the presentation, set 
$$v(x)=\frac{\partial u}{\partial t}(0,x), \ \ u_0(x)=u(0,x), \ x\in B_R,$$
and $\zeta(x)=\eta(x)\cdot\nu(x)$ for $x\in\partial B_R$. 
From the volume constraint $|\Omega(t)|=|B_R|$ for $t\in (-\delta, \delta)$, we claim that 
\begin{equation}\label{ling01}
\int_{\partial B_R} \zeta(x)\,d\sigma=\int_{B_R} {\rm{div}}\eta(x) \,dx=0,
\end{equation}
and
\begin{equation}
\label{ling1}
\int_{\partial B_R} \zeta(x) {\rm{div}} \eta (x)\,d\sigma= \int_{B_R} {\rm{div}} ({{\rm{div}}\eta\, \eta})\,dx=0.
\end{equation}
To see this, notice that since $\displaystyle|\Omega(t)|=\int_{B_R} JF(t,x)\,dx$ is constant, we have that
$$\frac{d}{dt}\big|_{t=0} \int_{B_R} JF(t,x)\,dx=\frac{d^2}{dt^2}\big|_{t=0} \int_{B_R} JF(t,x)\,dx=0.$$
While by direct calculations we have
$$ \frac{d}{dt} JF(t,x)=({\rm{div}}\eta\circ F(t,x)) JF(t,x), $$
and
$$
\frac{d^2}{dt^2} JF(t,x)=({\rm{div}}\eta\circ F(t,x))^2+(\nabla{\rm{div}}\eta\circ F(t,x)) (\eta\circ F(t,x)) JF(t,x).
$$
Thus we obtain 
\begin{equation*}
\begin{cases}
\displaystyle\int_{B_R} {\rm{div}}\eta(x)\,dx=0,\\
\displaystyle\int_{B_R} {\rm{div}}({{\rm{div}}\eta\, \eta})(x)\,dx=\int_{B_R} \big(({\rm{div}}\eta)^2+\eta\nabla{\rm{div}}\eta\big)(x)\,dx=0
\end{cases}
\end{equation*}
so that \eqref{ling01} and \eqref{ling1} hold.

From \eqref{ball-solution}, we see that
$$u_0= \frac{m}{n^2\omega_n R^{n-2}}\ {\rm{and}}\
\nabla u_0(x)=-\frac{x}{n}\ {\rm{on}}\ \partial B_R; \ \ \frac{\partial u_0}{\partial\nu}=-\frac{R}{n} \ {\rm{on}}\ \partial B_R.$$
Applying \eqref{surface-derivative}, we have
\begin{eqnarray}\label{ling0}
 \frac{d}{dt}\big |_{t=0} \big(\frac{1}{m}\int_{\partial \Omega(t)} u(t,x)\,d\sigma \big)
 &=&\frac{1}{m}\int_{\partial B_R} \big(v(x)+\frac{\partial u_0}{\partial \nu}(x) \zeta(x)
    +u_0(x)H(x) \zeta(x)\big)\,d\sigma\nonumber\\
 &=& \frac{1}{m}\int_{\partial B_R} v(x)\,d\sigma+\big(\frac{n-1}{n^2\omega_n R^{n-1}}-\frac{R}{nm}\big)\int_{\partial B_R}\zeta(x)\,d\sigma\nonumber\\
 &=&\frac{1}{m}\int_{\partial B_R} v(x)\,d\sigma,
\end{eqnarray}
where we have used $H=\frac{n-1}{R}$ on $\partial B_R$.

Now we want to show that $v$ solve the boundary value problem in $B_R$:
 \begin{equation}
	  \left\{
	    \begin{array}{ll}
	      -\Delta v=0, & \text{ in }B_R, \\
	     \displaystyle\ \ \frac{\partial v}{\partial \nu}  =\frac{\zeta}{n},&  \text{ on }\partial B_R.
	    \end{array}
	    \right.
	    \label{eqnvelocity}
\end{equation}
To see \eqref{eqnvelocity},  let $\phi\in C^\infty_0(B_{R+1})$.
Then by \eqref{volume-derivative} we have
\begin{eqnarray*}
0&=&\frac{d}{dt}|_{t=0}\int_{\Omega(t)} (\Delta u(t,x)+1)\phi(x)\,dx\\
&=&\int_{B_R} \Delta v(x)\phi(x)\,dx+\int_{\partial B_R} (\Delta u_0+1)\phi(x)\zeta(x)\,d\sigma\\
&=&\int_{B_R} \Delta v(x)\phi(x)\,dx,
\end{eqnarray*} 
where we have used the fact that $\Delta u_0+1=0$  on $\partial B_R$. Since $\phi$ is arbitrary,
we conclude that $\Delta v=0$ in $B_R$. 
To show $v$ satisfies the boundary condition \eqref{eqnvelocity}$_2$, we apply \eqref{ling0} and
\eqref{surface-derivative},
and proceed as follows.
\begin{eqnarray}\label{bdry2}
    &&0=\frac{d}{dt}\big|_{t=0}\int_{\partial \Omega(t)} \phi(x)\big[\nu(t,x)\cdot\nabla u(t,x)
    +(\frac{1}{m}\int_{\partial \Omega(t)}u(t,y)\,d\sigma)\big]\,d\sigma\nonumber\\
    &&= \int_{\partial B_R} \phi(x) \big(\frac{x}R\cdot\nabla v(x)+\frac{\partial \nu}{\partial t}(0,x)\cdot\nabla u_0 
    +[\frac{x}{R}\cdot\nabla(\frac{x}{|x|})\cdot\nabla u_0+\frac{x}{R}\otimes \frac{x}{R}:\nabla^2 u_0]\zeta(x)\big)\,d\sigma\nonumber\\
    &&\ + (\frac{1}{m}\int_{\partial B_R} v(x)\,d\sigma)\int_{\partial B_R}\phi(x)\,d\sigma
    +\int_{\partial B_R}\phi(x)\big(\frac{\partial u_0}{\partial\nu}+(\frac{1}{m}\int_{\partial B_R} u_0(x)\,d\sigma)\big)
    H(x)\zeta(x)\,d\sigma\nonumber\\
    &&= \int_{\partial B_R} \phi(x) \big(\frac{\partial v(x)}{\partial\nu} -\frac{1}{n}\zeta(x)
    +\frac{1}{m}\int_{\partial B_R} v(x)\,d\sigma \big),
\end{eqnarray}
where we have used the following facts:
$$\langle \frac{\partial\nu}{\partial t}(0,x),\nabla u_0(x)\rangle 
=-\frac{R}{n} \langle \frac{\partial\nu}{\partial t}(0,x),\nu(0,x)\rangle=0, \ {\rm{on}}\ \partial B_R, $$
$$\frac{x}{R}\cdot\nabla(\frac{x}{|x|})\cdot\nabla u_0=-\frac{1}{n}\frac{x}{R}\cdot\nabla(\frac{x}{|x|})\cdot x=0,
\ {\rm{on}}\ \partial B_R,$$
$$\frac{x}{R}\otimes \frac{x}{R}:\nabla^2 u_0=-\frac{1}{n} \frac{x}{R}\otimes \frac{x}{R}: I_n
=-\frac{1}{n}, \ {\rm{on}}\ \partial B_R,$$
and
$$
\frac{\partial u_0}{\partial\nu}+\frac{1}{m}\int_{\partial B_R} u_0(x)\,d\sigma=0, \ {\rm{on}}\ \partial B_R.
$$
It follows from \eqref{bdry2} that 
\begin{equation}\label{bdry1}
\frac{\partial v}{\partial\nu} =\frac{\zeta}{n}-\frac{1}{m}\int_{\partial B_R}v(x)\,d\sigma, \ {\rm{on}}\ \partial B_R.
\end{equation}
Since $\Delta v=0$ in $B_R$, we have
$$\int_{\partial B_R}\frac{\partial v}{\partial\nu}\,d\sigma=0, $$
this, combined with \eqref{bdry1} and $\displaystyle\int_{\partial B_R}\zeta=0$, implies  that 
\begin{equation}\label{zero-average}
\frac{1}{m}\int_{\partial B_R}v(x)\,d\sigma=0.
\end{equation}
Thus $v$ solves \eqref{eqnvelocity}. From \eqref{zero-average} and \eqref{ling0}, we also have that  
\begin{equation}
\label{ling}
\frac{d}{dt}\big |_{t=0} \big(\frac{1}{m}\int_{\partial \Omega (t)} u(t,x)\,d\sigma \big)=0.
\end{equation}

Next we want to compute the second order variation based on \eqref{critical3.1}. First, 
applying \eqref{surface-derivative}, we have
\begin{eqnarray}\label{I1}
    &&I'(0)=\frac{d}{dt}\big |_{t=0}\int_{\partial \Omega(t)} \frac{1}{2}|\nabla u|^2(t,x) \eta(x) \cdot \nu(t,x)\,d\sigma\nonumber \\
    &=& \int_{\partial B_R}\big(\nabla u_0(x) \cdot \nabla v(x) \zeta(x)+\frac12|\nabla u_0(x)|^2\eta(x)\cdot\frac{\partial \nu}{\partial t}(0,x)\big)\,d\sigma\nonumber\\
    &+&\int_{\partial B_R}\big(\eta(x)\cdot\nabla ^2 u_0(x) \cdot \nabla u_0(x)  \eta(x) \cdot \nu(x) + \frac{1}{2} |\nabla u_0(x)|^2 \eta\cdot\nabla (\eta(x)\cdot\nu(x))\big)\,d\sigma\nonumber\\
    &+&\int_{\partial B_R}\frac{1}{2} |\nabla u_0(x)|^2 H(x)(\eta(x) \cdot \nu(x))^2\,d\sigma,
\end{eqnarray}
where we have used the fact that $\nu(x)=\nu(0,x)$ for $x\in\partial B_R$.  

Since $\langle\frac{\partial \nu}{\partial t}(0,x), \nu(x)\rangle=0$ and $\eta(x)=\zeta(x) \nu (x)$ on $\partial B_R$, we see that
\begin{equation}\label{I1.1}
\int_{\partial B_R}\frac12|\nabla u_0(x)|^2\eta(x)\cdot\frac{\partial \nu}{\partial t}(0,x)\,d\sigma=0.
\end{equation}
Since $\nu(x)=\frac{x}{R}$ and $\nabla u_0(x)=-\frac{x}{n}$ on $\partial B_R$, by \eqref{eqnvelocity} we see that 
\begin{equation}\label{I1.2}
\int_{\partial B_R}\nabla u_0(x) \cdot \nabla v(x) \zeta(x)\,d\sigma=-\frac{R}{n^2}\int_{\partial B_R}\zeta^2(x)\,d\sigma.
\end{equation}
Direct calculations yield
\begin{eqnarray}\label{I1.3}
&&\int_{\partial B_R}\eta(x)\cdot\nabla ^2 u_0(x) \cdot \nabla u_0(x)  \eta(x) \cdot \nu(x)\,d\sigma\nonumber\\
&&=\int_{\partial B_R}\eta^i(x)(-\frac{|x|^2}{2n})_{ij} (-\frac{|x|^2}{2n})_{j}\zeta(x)\,d\sigma\nonumber\\
&&=\frac{1}{n^2}\int_{\partial B_R} (\eta(x)\cdot x)\zeta(x)\,d\sigma\nonumber\\
&&=\frac{R}{n^2}\int_{\partial B_R} \zeta^2(x)\,d\sigma.
\end{eqnarray}
Notice that on $\partial B_R$, we have the formula
\begin{eqnarray}\label{I1.30}
\eta\cdot\nabla (\eta\cdot\nu)&=&\zeta\langle\nu, \nabla\zeta\rangle
=\zeta {\rm{div}}(\zeta\nu)-\zeta^2{\rm{div}}\nu\nonumber\\
&=& \zeta{\rm{div}} \eta -\zeta^2H.
\end{eqnarray}
Thus we obtain that
\begin{eqnarray}\label{I1.4}
&&\int_{\partial B_R}\frac{1}{2} |\nabla u_0(x)|^2 \eta\cdot\nabla (\eta(x)\cdot\nu(x))\,d\sigma\nonumber\\
&&=\int_{\partial B_R}\frac{1}{2} |\nabla u_0(x)|^2\big(\zeta(x) {\rm{div}} \eta(x) -\zeta^2(x) H(x)\big)\,d\sigma.
\end{eqnarray}

Substituting \eqref{I1.1}, \eqref{I1.2}, \eqref{I1.3}, \eqref{I1.4} into \eqref{I1} and applying \eqref{ling1}, we obtain that
\begin{eqnarray}\label{I1.5}
I'(0)&=&\int_{\partial B_R}\frac{1}{2} |\nabla u_0(x)|^2\zeta(x) {\rm{div}} \eta(x)\,d\sigma\nonumber\\
&=&\frac{R^2}{2n^2}\int_{\partial B_R}\zeta(x) {\rm{div}} \eta(x)\,d\sigma=0.
\end{eqnarray}

Next,  by \eqref{ling1} and \eqref{surface-derivative}, we compute
\begin{eqnarray}\label{III}
    III'(0)&=&-\frac{d}{dt}\big|_{t=0}\int_{\partial \Omega(t)}u(t,x) \eta(x) \cdot \nu(t,x)\,d\sigma\nonumber\\
    &=&- \int_{\partial B_R} (v(x)\zeta(x)+u_0(x)\eta(x)\cdot\frac{\partial\nu}{\partial t}(0,x))\,d\sigma\nonumber\\
    &&-\int_{\partial B_R} \big(\frac{\partial u_0(x)}{\partial\nu}\zeta^2(x) +u_0(x) \eta\cdot\nabla (\eta(x)\cdot\nu(x))\big)\,d\sigma\nonumber\\
    &&-\int_{\partial B_R} u_0(x) \zeta^2(x) H(x)\,d\sigma\nonumber\\
    &=& -\int_{\partial B_R} \big(v(x)\zeta(x) -\frac{R}{n} \zeta^2(x)\big)\,d\sigma-\int_{\partial B_R} u_0(x) \zeta(x){\rm{div}}\eta(x)\,d\sigma\nonumber\\
    &=& \frac{R}{n}\int_{\partial B_R} \zeta^2(x)\,d\sigma-\int_{\partial B_R} v(x)\zeta(x)\,d\sigma,
\end{eqnarray}
where we have used the fact $\eta(x)\cdot\frac{\partial\nu}{\partial t}(0,x)=0$, \eqref{I1.30}, and \eqref{I1.5} on $\partial B_R$.

Recall that the mean curvature of $\partial\Omega(t)$ satisfies (see Huisken \cite{Huisken})
\begin{equation}\label{IV0}
\frac{\partial H}{\partial t}(t,x)=-\Delta_{\partial \Omega(t)} \eta(x)\cdot\nu(t,x) -|A(t,x)|^2 \eta(x)\cdot\nu(t,x), \ x\in\partial\Omega(t),
\end{equation}
where $\Delta_{\partial\Omega(t)}$ is the Laplace operator on $\partial\Omega(t)$, and $A$ is the second fundamental form of $\partial\Omega(t)$.

Applying \eqref{ling}, \eqref{surface-derivative},  \eqref{I1.30}, and \eqref{I1.4} we can compute
\begin{eqnarray}\label{II}
    II'(0)&=&-\frac{d}{dt}\big|_{t=0} \Big[\big(\frac{1}{m}\int_{\partial \Omega(t)} u(t,x)\,d\sigma\big)^2\int_{\partial \Omega(t)} \eta(x) \cdot \nu(t,x)\,d\sigma \Big]\nonumber\\
    &=&-\big(\frac{1}{m}\int_{\partial B_R} u_0\big)^2 \frac{d}{dt}\big|_{t=0}\int_{\partial \Omega(t)}\eta(x) \cdot \nu(t,x)\,d\sigma\nonumber\\
    &=&-\frac{R^2}{n^2}\int_{\partial B_R}\big(\eta(x)\cdot\frac{\partial\nu}{\partial t}(0,x)+\eta(x)\cdot\nabla(\eta(x)\cdot\nu(x)) +H(x)\zeta^2(x)\big)\,d\sigma\nonumber\\
    &=&-\frac{R^2}{n^2}\int_{\partial B_R}\big(\eta(x)\cdot\frac{\partial\nu}{\partial t}(0,x)+(\zeta(x){\rm{div}}\eta(x)-\zeta^2(x)H(x))+H(x)\zeta^2(x)\big)\,d\sigma\nonumber\\
    &=&-\frac{R^2}{n^2}\int_{\partial B_R}\zeta(x){\rm{div}}\eta(x)\,d\sigma=0.
\end{eqnarray}
Applying \eqref{ling}, \eqref{surface-derivative}, \eqref{IV0}, and \eqref{I1.30}, and using
$$ \frac{1}{m}\int_{\partial B_R} u_0(x)\,d\sigma=\frac{R}{n}, \ |A(x)|^2=\frac{n-1}{R^2} \ {\rm{for}}\ x\in\partial B_R,$$
we can compute
\begin{eqnarray}\label{IV}
    &&IV'(0)\nonumber\\
    &&=\frac{d}{dt}\big|_{t=0}\int_{\partial \Omega(t)} \big(\frac{1}{m}\int_{\partial \Omega(t)} u(t,y)\,d\sigma\big)u(t,x) H(t,x) \eta(x) \cdot \nu(t,x)\,d\sigma\nonumber\\
    &&=\int_{\partial B_R} \frac{d}{dt} \big|_{t=0}\big(\frac{1}{m}\int_{\partial \Omega(t)} u(t,y)\,d\sigma\big)u_0(x) H(x) \zeta(x)\,d\sigma\nonumber\\
    &&\ \ +\big(\frac{1}{m}\int_{\partial B_R} u_0(x)\,d\sigma\big) \int_{\partial B_R}(v(x)+\eta(x)\cdot\nabla u_0(x))H(x) \zeta(x)\,d\sigma\nonumber\\
    &&\ \ + \big(\frac{1}{m}\int_{\partial B_R} u_0(x)\,d\sigma\big)\int_{\partial B_R} u_0(x)(-\Delta_{\partial B_R} \zeta(x) -|A(x)|^2 \zeta(x)) \zeta(x)\,d\sigma\nonumber\\
    &&\ \ +\big(\frac{1}{m}\int_{\partial B_R} u_0(x)\,d\sigma\big)\int_{\partial B_R} u_0(x)H(x)(\zeta(x) {\rm{div}}\eta(x) -\zeta^2(x) H(x))\,d\sigma\nonumber\\
    &&\ \ + \big(\frac{1}{m}\int_{\partial B_R} u_0(x)\,d\sigma\big)\int_{\partial B_R} u_0(x)H^2(x) \zeta^2(x)\,d\sigma\nonumber \\
    &&=\frac{R}{n}\big[\frac{n-1}{R}\int_{\partial B_R}v(x)\zeta(x)\,d\sigma-\frac{n-1}{n}\int_{\partial B_R}\zeta^2(x)\,d\sigma\nonumber\\
    &&\ \ +\frac{m}{n^2\omega_n R^{n-2}}\int_{\partial B_R}(-\Delta_{\partial B_R} \zeta(x) -|A(x)|^2 \zeta(x))\zeta(x)\,d\sigma\big]\nonumber\\
    &&=-\frac{(n-1)R}{n^2}\int_{\partial B_R} \zeta^2(x)\,d\sigma+\frac{n-1}{n}\int_{\partial B_R} v(x)\zeta(x)\,d\sigma\nonumber\\
    &&\ \ +\frac{m}{n^3\omega_n R^{n-3}}\int_{\partial B_R} \big(|\nabla_{\rm{tan}} \zeta(x)|^2-\frac{n-1}{R^2}\zeta^2(x)\big)\,d\sigma.
\end{eqnarray}
Therefore, by adding \eqref{I1}, \eqref{II}, \eqref{III}, and \eqref{IV} together, we obtain
\begin{eqnarray}\label{2nd-variation}
   \frac{d^2}{dt^2}\big|_{t=0}\mathcal{J}_m(u(t), \Omega(t)) &=& I'(0)+II'(0)+III'(0)+IV'(0)=I'(0)+IV'(0)\nonumber\\
    &=& \frac{R}{n^2}\int_{\partial B_R} \zeta^2(x)\,d\sigma-\frac{1}{n}\int_{\partial B_R} v(x) \zeta(x)\,d\sigma\nonumber\\
    && + \frac{m}{n^3\omega_n R^{n-3}}\int_{\partial B_R}\big(|\nabla_{\rm{tan}} \zeta(x)|^2-\frac{n-1}{R^2}\zeta^2(x)\big)\,d\sigma.
\end{eqnarray}
Since $\int_{\partial B_R}\zeta(x)\,d\sigma=0$, it follows from the Poincar\'e inequality on $\partial B_R$ that
\begin{equation}\label{poincare}
\int_{\partial B_R}\big(|\nabla_{\rm{tan}} \zeta(x)|^2-\frac{n-1}{R^2}\zeta^2(x)\big)\,d\sigma\ge 0.
\end{equation}
Now we claim that 
\begin{equation}\label{stekloff}
 \frac{R}{n^2}\int_{\partial B_R} \zeta^2(x)\,d\sigma-\frac{1}{n}\int_{\partial B_R} v(x) \zeta(x)\,d\sigma\ge 0.
\end{equation}
To see this, notice that by \eqref{zero-average}, $\displaystyle\int_{\partial B_R} v(x)\,d\sigma=0$. 
Recall that the first Stekloff eigenvalue on $B_R$ is $\frac{1}{R}$, which implies that
\begin{align}\label{stekloff1}
\int_{\partial B_R} v^2(x)\,d\sigma\le R\int_{B_R}|\nabla v(x)|^2\,dx.
\end{align}
Applying the equation \eqref{eqnvelocity} for $v$, we have
\begin{eqnarray*}
   \int_{B_R}|\nabla v(x)|^2\,dx &=&\int_{\partial B_R} \frac{\partial v(x)}{\partial\nu}v(x)\,d\sigma
    = \frac{1}{n}\int_{\partial B_R} \zeta(x) v(x)\,d\sigma\\
    &\le & \frac{1}{n}\big(\int_{\partial B_R} v^2(x)\,d\sigma \big)^{\frac12} \big(\int_{\partial B_R}\zeta^2(x)\,d\sigma\big)^{\frac12}\\
    &\le & \frac{R^{\frac12}}{n}\big(\int_{B_R}|\nabla v(x)|^2\,d\sigma\big)^{\frac12}
    \big(\int_{\partial B_R}\zeta^2(x) d\sigma\big)^{\frac12}.
\end{eqnarray*}
This implies  
\begin{align}
   \frac{1}{n}\int_{\partial B_R} \zeta(x) v(x)\,d\sigma= \int_{B_R} |\nabla v(x)|^2\,dx\le \frac{R}{n^2}\int_{\partial B_R} \zeta^2(x)\,d\sigma.
\end{align}
Hence \eqref{stekloff} holds. Putting \eqref{poincare} and \eqref{stekloff} into \eqref{2nd-variation}, we conclude that 
 $$\frac{d^2}{dt^2}\big|_{t=0}\mathcal{J}_m(u(t), \Omega(t))\ge 0.$$
This completes the proof. 
\end{proof}

\bigskip
\bigskip
\noindent{\bf Acknowledgements}. Both first and third authors are partially supported by NSF DMS grant 1764417.

\bigskip
\bigskip

\end{document}